\DeclareMathOperator*{\lOngrightarrow}{\longrightarrow}      
\newcommand{\be} {\begin{equation}}
\newcommand{\ee} {\end{equation}}
\newcommand{\bea} {\begin{eqnarray}}
\newcommand{\eea} {\end{eqnarray}}
\newcommand{\RR}{{\mathbb R}}
\newcommand{\NN}{{\mathbb N}}
\newcommand{\eqdef}{\stackrel{{\rm {def}}}{=}}
\title{Quasilinear and singular elliptic systems}
\abstract{In this paper, we investigate the following quasilinear elliptic and singular system $\boldsymbol{({\rm P})}$: 
$$-\Delta_pu=f_1(x,u,v)\quad\text{in }\Omega\, ;\quad u|_{\partial\Omega}=0,\quad u>0\quad\text{in }\Omega,$$
$$-\Delta_qv=f_2(x,u,v)\quad\text{in }\Omega\, ;\quad v|_{\partial\Omega}=0,\quad v>0\quad\text{in }\Omega,$$
where $\Omega$ is a bounded domain with smooth boundary in $\RR^{N}$, $1<p,q<\infty$ and $f_1,f_2\in \mathscr{C}^1\left(\Omega\times\RR^{\ast}_+\times\RR^{\ast}_+\right)$ two positive functions. Under suitable conditions on $f_1$ and $f_2$, we first give a general result on the existence of positive weak solutions pairs $(u,v)\in\mathrm{W}^{1,p}_0(\Omega)\times\mathrm{W}^{1,q}_0(\Omega)$ to $\boldsymbol{({\rm P})}$. Next, we give some applications to Biology. }
\keywords{Quasilinear singular elliptic systems, weak comparison principle, sub and super solutions, cone condition, Schauder fixed point Theorem}
\begin{document}

\section{Introduction}

In this paper we are interested in the following quasilinear elliptic and singular system, 
$$ \boldsymbol{({\rm P})}\left\lbrace
\begin{array}{ll}
          -\Delta_pu=f_1(x,u,v)\quad\text{in }\Omega\, ;\quad u|_{\partial\Omega}=0,\quad u>0\quad\text{in }\Omega, \vspace{0.2cm} \\
          
          -\Delta_qv=f_2(x,u,v)\quad\text{in }\Omega\, ;\quad v|_{\partial\Omega}=0,\quad v>0\quad\text{in }\Omega.
 \end{array}
\right.
$$
Here, $\Omega$ is a bounded domain of $\RR^N$, $N\geq2$ with  $\mathscr{C}^2$ boundary $\partial \Omega$, $\Delta_ru\eqdef\mathrm{div}(\vert\nabla u\vert^{r-2}\nabla u)$ denotes the $r\,$\--Laplace operator and $1<p,q<\infty$. In the right-hand sides, $f_1$ and $f_2$ are two Carath\'eodory functions in $\Omega\times\left(\RR^{\ast}_+\times\RR^{\ast}_+\right)$  possibly singular. More precisely, for every $(t_1,t_2)\in \RR^{\ast}_+\times\RR^{\ast}_+$ and for almost every $x\in \Omega$, { we assume that}

  $\boldsymbol{(H_1)}\qquad$ $f_1(\cdot,t_1,t_2)$ and $ f_2(\cdot,t_1,t_2)$ are Lebesgue measurable in $\Omega,$
  
  $\boldsymbol{(H_2)}\qquad$ $ f_1(x,\cdot,\cdot)$ and $f_2(x,\cdot,\cdot)$ are in $\mathscr{C}^1(\RR^{\ast}_+\times\RR^{\ast}_+).$\\
We aim {to establish} the existence of a positive weak solutions pair to problem $\boldsymbol{({\rm P})}$ using the Schauder Fixed Point {Theorem}. Namely, if we can compose two order-reversing mappings, 
\be\label{T}(u,v)\mapsto T_1(u,v)\eqdef \tilde{u}\quad\text{ and } \quad (u,v)\mapsto T_2(u,v)\eqdef\tilde{v},\ee where $\tilde{u}\in{\rm W}^{1,p}_0(\Omega)$ and $\tilde{v}\in{\rm W}^{1,q}_0(\Omega)$ are defined to be the (unique) positive weak solution to the Dirichlet problems
\be\label{T1}-\Delta_p \,\tilde{u}+h_1(x,\tilde{u})=f_1\left(x,{u},v\right)+h_1(x,u)\;\text{in }\Omega;\;\; \tilde{u}|_{\partial\Omega=0},\quad\tilde{u}>0\quad \text{in }\Omega,\ee
\be\label{T2}-\Delta_q \,\tilde{v}+h_2(x,\tilde{v})=f_2\left(x,u,{v}\right)+h_2(x,v)\;\text{in }\Omega;\;\;\tilde{v}|_{\partial\Omega=0},\quad\tilde{v}>0\quad \text{in }\Omega,\ee
respectively, in suitable conical shells of positive cones in ${\rm W}^{1,p}_0(\Omega)$ and ${\rm W}^{1,q}_0(\Omega)$, with appropriate functions $h_1$ and $h_2$; then any fixed point of the mapping 
\be\label{Tbis}(u,v)\mapsto{T}(u,v)\eqdef(T_1(u,v),T_2(u,v))\ee
is a positive weak solution pair to $\boldsymbol{({\rm P})}$ and conversely. To prove that $T$ is well defined and invariant in some conical shell, we use monotonicity methods together with the existence of sub\-- and supersolutions which prescribe the behaviour of the right\--hand side singular non-linearities, namely $f_1$ and $f_2$, near the boundary $\partial\Omega$. The continuity and the compactness in $\mathscr{C}^{0,\alpha}(\overline{\Omega})\times \mathscr{C}^{0,\alpha}(\overline{\Omega})$ for some suitable $0<\alpha<1$ follow from the regularity result Theorem 1.1 in \cite{GST1} we recall in the appendix (see Theorem \ref{regu}). We derive further uniqueness results in case where the system $\boldsymbol{({\rm P})}$ is competitive or cooperative (see Theorem \ref{uniq}). To establish the uniqueness of a positive pair of solutions to $\boldsymbol{({\rm P})}$, it is essential that the mapping $T$ is {\em subhomogeneous\/}. In the cooperative and "strong" singular case, we also prove the existence of very weak solutions in ${\rm W}^{1,p}_{{\rm loc}}(\Omega)\times {\rm W}^{1,q}_{{\rm loc}}(\Omega)$ (see Theorem \ref{TH2}).

Quasilinear elliptic systems have been quite intensely investigated in the literature with various methods. In \cite{ThVe}, the authors take advantage of the variational structure of the problem to apply variational methods. In \cite{ClMaMi}, a blow up argument combined with a Liouville theorem yields universal a priori bounds. Then, the existence of solutions is obtained by a topological degree argument (see also the review article \cite{Defig}). In \cite{CuTa}, the key ingredients to prove existence of solutions are the Strong Comparison Principle and Kre{\v\i}n-Rutman theorem for homogeneous non-linear mapping. While dealing with subhomogeneous systems, one usually appeals the method of sub and supersolutions.

Related problems for \textit{singular} quasilinear systems have been also studied in
\cite{LeShYe} and \cite{GHM}. Accordingly, we study in our paper a more general situation that handle more singular cases. We point out additionally that in the present work non-linearities $f_1$ and $f_2$ are not necessary non-negative.

The case of singular semi-linear systems ($p=q=2$) has been studied
even more frequently in
\cite{ChKe}, \cite{ChKe2}, \cite{Ni}, \cite{Her}, \cite{HeMa} and \cite{Gh}.
We refer to \cite{Her} for additional references on the subject.

Throughout this paper, we will use the following notations and definitions:
\begin{enumerate}
\item To $r\in(1,+\infty)$ we associate $r'\eqdef\frac{r}{r-1}>1$ and we denote by ${\rm W}^{-1,r'}(\Omega)$ the dual space of ${\rm W}^{1,r}_0(\Omega)$ with respect to the standard inner product in ${\rm L}^2(\Omega)$.
\item We denote by $d(x)\eqdef   \inf\limits_{y\in\partial\Omega}d(x,y)$, the distance from $x\in\Omega$ to $\partial\Omega$.
\item We denote by $D\eqdef \sup\limits_{x,y\in\Omega}d(x,y)$, the diameter of the domain $\Omega$.
\item Let $f,g: \Omega\longrightarrow [0,+\infty]$ be two functions of ${\rm L}^{1}_{{\rm loc}}(\Omega).$ Then, we write $$f(x) \,\sim\, g(x)\qquad\text{ in }\Omega$$ if there exist two positive constants $C_1$ and $C_2$ such that for almost every $x\in\Omega$,
\be\nonumber\label{def1} C_1g(x)\leq f(x)\leq C_2g(x).\nonumber\ee
\item The function $\varphi_{1,r}\in \mathrm{W}^{1,r}_0(\Omega)$ denotes the positive and $\mathrm{L}^r$-renormalized 
 eigenfunction corresponding to the first eigenvalue of $-\Delta_r$,
\begin{small}$$\lambda_{1,r}\eqdef\inf\left\lbrace\int_{\Omega}\vert\nabla v\vert^rdx\in\RR,\quad v\in \mathrm{W}^{1,r}_0(\Omega)\quad\text{and}\quad\int_{\Omega}\vert v\vert^rdx=1\right\rbrace.$$                                                                                                                                               \end{small}
It is a weak solution of the following eigenvalue problem:
$$
-\Delta_r w = \lambda_{1,r} w^{r-1}\text{ in }\Omega;\quad w|_{\partial\Omega}=0,\quad w> 0 \quad\text{ in }\Omega.
$$
Using Moser iterations, $\varphi_{1,r}\in \mathrm{L}^{\infty}(\Omega)$ and using the H\" older regularity result in \textsc{Lieberman} \cite{Lie}, $\varphi_{1,r}\in\mathscr{C}^{1,\alpha}\left(\overline{\Omega}\right)$ for some $0<\alpha<1$.
Moreover the strong maximum and boundary principles from \textsc{V\'asquez} \cite{Vas}, guarantee that $\varphi_{1,r}$ satisfies 
\be\label{vep} \varphi_{1,r}(x)\sim d(x)\quad\text{ in }\Omega.\ee
\item We say that a Lebesgue measurable function $f:\Omega\rightarrow\RR$ is \textit{locally uniformly positive} if $\mathrm{essinf}_{K}f>0$ holds over every compact set $K\subset\Omega$. 
\item In this paper, we primarily look for \textit{{positive weak solution pairs}} (\textit{{positive solutions}}, for short) of problem $\boldsymbol{({\rm P})}$, that is, pairs of functions $(u,v)\in{\rm W}^{1,p}_0(\Omega)\times{\rm W}^{1,q}_0(\Omega)$ with both $u$ and $v$ locally uniformly positive and each satisfying the respective equation in problem $\boldsymbol{({\rm P})}$ in the weak sense. More precisely, given $1<r<\infty$ and $f\in{\rm W}^{-1,r'}(\Omega)$, we say that a function $u\in{\rm W}^{1,r}_0(\Omega)$ satisfies the equation 
\be\label{bo} -\Delta_ru=f\quad \text{ in }\Omega\ee
in the weak sense if $u $ is locally uniformly positive and satisfies
$$\forall w\in{\rm W}^{1,r}_0(\Omega),\quad\int_{\Omega}\vert\nabla u\vert^{r-2}\nabla u.\nabla w\,dx=\langle f,w\rangle_{{\rm W}^{-1,r'}(\Omega)\times {\rm W}^{1,r}_0(\Omega)}.$$
In the case where the existence of positive solutions of $\boldsymbol{({\rm P})}$ cannot be established, we discuss the existence of weaker solutions. Then, we say that $(u,v)\in{\rm W}^{1,p}_{\rm loc}(\Omega)\times{\rm W}^{1,q}_{\rm loc} (\Omega)$ is a \textit{positive very weak solution pair} of $\boldsymbol{({\rm P})}$ if both $u$ and $v$ are locally uniformly positive and satisfy the respective equation in problem $\boldsymbol{({\rm P})}$ in the sense of distributions. \\

 In the three last points, for $1<r<+\infty$, $\mathscr{A}_r(\Omega)$ represents the space ${\rm W}^{1,r}_0(\Omega)$ or the space ${\rm W}^{1,r}_{\rm loc}(\Omega)$.\\
 
 \item Let $\underline{w},\overline{w}\in\mathscr{A}_r(\Omega)$, two locally uniformly positive functions such that $\underline{w}\leq\overline{w}$ a.e. in $\Omega$. We define the convex set $$\left[\underline{w},\overline{w}\right]\eqdef\left\lbrace w\in\mathscr{A}_r(\Omega)\cap\mathscr{C}\left(\overline{\Omega}\right),\quad \underline{w}\leq w\leq\overline{w}\quad \text{ a.e. in }\Omega\right\rbrace.$$
\item Let $\underline{u},\overline{u}\in\mathscr{A}_p(\Omega)$ and $\underline{v},\overline{v}\in\mathscr{A}_q(\Omega)$ four locally uniformly positive functions such that $\underline{u}\leq\overline{u}$ a.e. in $\Omega$ and  $\underline{v}\leq\overline{v}$ a.e. in $\Omega$. The couples $(\underline{u},\underline{v})$ and $(\overline{u},\overline{v})$ are said to be \textit{sub and supersolutions} pairs to $\boldsymbol{({\rm P})}$ if the following inequalities are satisfied in the distribution sense
\bea
\label{soussol}-\Delta_p\,\underline{u}\leq f_1\left(x,\underline{u},v\right)\quad \text{ in }\Omega,\quad \text{ for any }v\in\left[\underline{v},\overline{v}\right],\vspace{0.2cm}\\
-\Delta_q\,\underline{v}\leq f_2\left(x,u,\underline{v}\right)\quad \text{ in }\Omega,\quad \text{ for any }u\in\left[\underline{u},\overline{u}\right],\vspace{0.2cm}\\
\label{sursolbisbis}-\Delta_p\,\overline{u}\geq f_1\left(x,\overline{u},v\right)\quad \text{ in }\Omega,\quad \text{ for any }v\in\left[\underline{v},\overline{v}\right],\vspace{0.2cm}\\
\label{sursol}-\Delta_q\,\overline{v}\geq f_2\left(x,u,\overline{v}\right)\quad \text{ in }\Omega,\quad \text{ for any }u\in\left[\underline{u},\overline{u}\right].
\eea
\item Let$(\underline{u},\underline{v})$, $(\overline{u},\overline{v})\in\mathscr{A}_p(\Omega)\times\mathscr{A}_q(\Omega)$ be  respectively sub and supersolutions pairs to $\boldsymbol{({\rm P})}$. Then, the conical shell $[\underline{u},\overline{u}]\times[\underline{v},\overline{v}]$ is denoted by $\mathcal{C}$.
\end{enumerate}
The paper is organised as follows. The next section (Section 2) contains the statements and the proofs of our main results (Theorems \ref{TH1} and Theorems \ref{TH2}). Different applications of Theorems \ref{TH1} and \ref{TH2} arising in population dynamics models are given in Section 3. The appendix contains the regularity result (Theorem \ref{regu}) used to prove H\"older continuity of solutions. Theorem \ref{regu} is proved in \cite{GST1}.
\section{  General results }
\begin{theorem}\label{TH1}
Let $(\underline{u},\underline{v}),(\overline{u},\overline{v})\in \mathrm{W}^{1,p}_0(\Omega)\times {\rm W}^{1,q}_0(\Omega)$ be sub and supersolutions pairs to $\boldsymbol{({\rm P})}$ and assume in addition that the following conditions hold:
\begin{enumerate}
\item there exist constants $k_1, k_2>0$ and $\delta_1,\delta_2\in\RR$ such that 
\be\label{f1}
 \vert f_1(x,u,v)\vert\leq k_1 d(x)^{\delta_1} \quad\text{and}\quad \vert  f_2(x,u,v)\vert\leq k_2 d(x)^{\delta_2} \quad\text{in }\;\Omega\times\mathcal{C},\ee
\item there exist constants $C_1,C_2>0$ and $b_1,b_2>0$ such that
\be\label{ineqsursol} 
 \overline{u}\leq C_1 d(x)^{b_1}\quad\text{ and }\quad \overline{v} \leq C_2 d(x)^{b_2}\quad\text{in }\;\Omega,\\
\ee
\item and there exist $\kappa_1,\kappa_2>0$ and $\alpha_1, \alpha_2>0$ such that
\be\label{df1}
 \left| \frac{\partial f_1}{\partial u}(x,u,v)\right|\leq \kappa_1 d(x)^{\delta_1-\alpha_1}\quad\text{in}\; \Omega\times\mathcal{C},\ee
\be\label{df2}
  \left| \frac{\partial f_2}{\partial v}(x,u,v)\right|\leq \kappa_2 d(x)^{\delta_2-\alpha_2}\quad\text{in}\; \Omega\times\mathcal{C},
\ee
\end{enumerate}
with the following conditions on the coefficients 
\be\label{delta1}
	\delta_1>-2+\frac{1}{p}+(\alpha_1-b_1)^+,
\qquad
	\delta_2>-2+\frac{1}{q}+(\alpha_2-b_2)^+.	
\ee
Then, there exists a  positive weak solutions pair $(u,v)\in \mathcal{C}$. 
\end{theorem}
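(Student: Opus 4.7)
The plan is to realize a positive solution pair of $\boldsymbol{({\rm P})}$ as a fixed point of the mapping $T$ defined in (\ref{Tbis}), via the Schauder Fixed Point Theorem applied to the conical shell $\mathcal{C}$ viewed as a closed, convex, bounded subset of $\mathscr{C}^{0,\alpha}(\overline{\Omega})\times\mathscr{C}^{0,\alpha}(\overline{\Omega})$ for a suitably small $\alpha\in(0,1)$. The first move is to pick $h_1$ and $h_2$ so that $s\mapsto f_1(x,s,v)+h_1(x,s)$ and $s\mapsto f_2(x,u,s)+h_2(x,s)$ become nondecreasing in $s$ on $\mathcal{C}$. Guided by (\ref{df1})--(\ref{df2}), I would take $h_1(x,s)=\kappa_1 d(x)^{\delta_1-\alpha_1}\,s$ (and analogously for $h_2$): the derivative $\partial_s h_1=\kappa_1 d^{\delta_1-\alpha_1}$ then dominates $|\partial_u f_1|$ uniformly on $\mathcal{C}$, which is the key monotonicity trick that lets the fixed point scheme close even though $f_1$ is not assumed to have a sign in $u$.

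Next, for each $(u,v)\in\mathcal{C}$ I would show that the auxiliary problem (\ref{T1}) admits a unique positive weak solution $\tilde u=T_1(u,v)\in\mathrm{W}^{1,p}_0(\Omega)$, and similarly $\tilde v=T_2(u,v)$. By (\ref{f1}) and (\ref{ineqsursol}), the right-hand side $f_1(x,u,v)+h_1(x,u)$ is controlled by a constant multiple of $d(x)^{\delta_1}+d(x)^{\delta_1-\alpha_1+b_1}$, and condition (\ref{delta1}) is precisely what is needed, via a Hardy-type duality pairing against $\mathrm{W}^{1,p}_0$-test functions, to ensure this majorant lies in $\mathrm{W}^{-1,p'}(\Omega)$. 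Existence and uniqueness of $\tilde u$ then follow from the strict monotonicity of the operator $w\mapsto -\Delta_p w+h_1(x,w)$ combined with the weak comparison principle.

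For invariance $T(\mathcal{C})\subset\mathcal{C}$, I rewrite the subsolution inequality (\ref{soussol}) as $-\Delta_p\underline u+h_1(x,\underline u)\leq f_1(x,\underline u,v)+h_1(x,\underline u)$ and use $\underline u\leq u$ together with the monotonicity secured above to majorize the right-hand side by $f_1(x,u,v)+h_1(x,u)=-\Delta_p\tilde u+h_1(x,\tilde u)$; the weak comparison principle for $-\Delta_p+h_1(x,\cdot)$ then yields $\underline u\leq\tilde u$, and the reverse bound $\tilde u\leq\overline u$ follows symmetrically from (\ref{sursolbisbis}). The regularity result Theorem \ref{regu} supplies a uniform $\mathscr{C}^{1,\alpha}(\overline{\Omega})$ bound on $T_1(u,v)$ and $T_2(u,v)$ as $(u,v)$ ranges over $\mathcal{C}$, so $T(\mathcal{C})$ is relatively compact in $\mathscr{C}^{0,\alpha}(\overline{\Omega})\times\mathscr{C}^{0,\alpha}(\overline{\Omega})$. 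Continuity of $T$ in the same topology follows from stability estimates for the perturbed $r$-Laplace problem combined with dominated convergence applied to $f_1,f_2$ along convergent sequences in $\mathcal{C}$, using the $d^{\delta_i}$ bound in (\ref{f1}) as an integrable majorant. Schauder's theorem then produces a fixed point, which by construction solves $\boldsymbol{({\rm P})}$.

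The delicate point, which the joint assumption (\ref{delta1}) is engineered to handle, is the calibration of $h_1$ and $h_2$: the perturbations must simultaneously dominate the singular derivatives of $f_1,f_2$ over the whole shell $\mathcal{C}$ (so that the monotonisation argument of step one works), preserve membership of the forcing terms in $\mathrm{W}^{-1,p'}\times\mathrm{W}^{-1,q'}$, and remain compatible with the sub- and supersolution comparisons. The $(\alpha_i-b_i)^+$ correction in (\ref{delta1}) arises precisely as the Hardy-type exponent of the $h_i(x,u)$ contribution when the supersolution degenerates like $d^{b_i}$ near $\partial\Omega$, and tuning the exponents so that all three constraints hold simultaneously is the main obstacle to overcome.
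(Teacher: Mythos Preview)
Your overall strategy is exactly the paper's: choose $h_1(x,s)=\kappa_1 d(x)^{\delta_1-\alpha_1}s$ (and similarly $h_2$), solve the monotonised auxiliary problems (\ref{T1})--(\ref{T2}), use (\ref{df1})--(\ref{df2}) together with the weak comparison principle to get invariance of $\mathcal{C}$, and close with Schauder. The interpretation of (\ref{delta1}) via Hardy's inequality to place the forcing in $\mathrm{W}^{-1,p'}$ is also the right one.

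There is, however, a genuine slip in the regularity/compactness step. Theorem~\ref{regu} does \emph{not} give a $\mathscr{C}^{1,\alpha}(\overline{\Omega})$ bound; it gives only $\mathscr{C}^{0,\alpha}(\overline{\Omega})$. In fact, when $\delta_i<0$ the right-hand side blows up at $\partial\Omega$ and $\mathscr{C}^{1,\alpha}$ regularity up to the boundary is generally false (this is precisely why the paper needs the bespoke result Theorem~\ref{regu} rather than Lieberman's theorem). Consequently, your plan to run Schauder in $\mathscr{C}^{0,\alpha}(\overline{\Omega})\times\mathscr{C}^{0,\alpha}(\overline{\Omega})$ also breaks down: the conical shell $\mathcal{C}$ is defined only as a subset of $\mathscr{C}(\overline{\Omega})\times\mathscr{C}(\overline{\Omega})$, and there is no a~priori reason its elements are H\"older continuous.

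The fix, which is what the paper does, is to take $\mathscr{C}(\overline{\Omega})\times\mathscr{C}(\overline{\Omega})$ as the ambient space, where $\mathcal{C}$ is closed, convex and bounded. Theorem~\ref{regu} then gives a uniform $\mathscr{C}^{0,\alpha}$ bound on $T(\mathcal{C})$, and Ascoli--Arzel\`a provides the compactness in $\mathscr{C}(\overline{\Omega})$. Continuity of $T$ is obtained by combining this global H\"older bound with interior $\mathscr{C}^{1,\gamma}$ estimates (Serrin) to pass to the limit in the equation on compact subsets, and then invoking uniqueness of the auxiliary solution in $[\underline{u},\overline{u}]$. A minor further point: the paper builds $\tilde u$ by minimising an energy with a cut-off $g_1$ (so that the singular weight $d^{\delta_1-\alpha_1}$ does not destroy coercivity on all of $\mathrm{W}^{1,p}_0$), rather than invoking abstract monotone operator theory; your route is viable but would require the same care with the unbounded coefficient.
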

\begin{remark}
 Instead of conditions \eqref{df1} and \eqref{df2}, as in \cite{GHM}, we { can rather suppose} that there exist $\kappa_1,\kappa_2>0$ and $\alpha_1, \alpha_2>0$ such that for all $(u,v)\in \mathcal{C}$,
 $$w\mapsto f_1(x,w,v) +\kappa_1 d(x)^{\delta_1-\alpha_1}w^{p-1}\text{ is non decreasing on } [\underline{u},\overline{u}], $$ 
 $$w\mapsto f_2(x,u,w) +\kappa_2 d(x)^{\delta_2-\alpha_2}w^{q-1}\text{ is non decreasing on } [\underline{v},\overline{v}].$$  Replacing condition \eqref{delta1} by
\be \delta_1>-2-\frac{1}{p}+(\alpha_1-(p-1) b_1)^+,
\qquad
\delta_2>-2+\frac{1}{q}+\left(\alpha_2-\left(q-1\right)b_2\right)^+,\nonumber
\ee
we get the same result and the condition is sharper if $p,q>2$. For that, it suffices to replace the first equation of the problem $\boldsymbol{({\rm Q})}$,  given below, by
$$
-\Delta_{p}w +\tilde{g}_1(x,w)=f_1(x,u,v)+\kappa_1d(x)^{\delta_1-\alpha_1} u^{p-1}\;\text{ in }\,\Omega,\\
$$
with $\tilde{g}_1:\Omega\times \RR\rightarrow \RR^{\ast}_+$ the cut-off function defined as follows:
\be \tilde{g}_1(x,z)\eqdef\left\lbrace\begin{array}{cl}
\kappa_1d(x)^{\delta_1-\alpha_1}\overline{u}^{p-1}&\quad\text{if } z\geq\overline{u}(x),\\
\kappa_1d(x)^{\delta_1-\alpha_1}z^{p-1} & \quad\text{if }z\in\left[0,\overline{u}(x)\right],\\
0&\quad \text{if }z\leq 0
\end{array}
\right. 
\ee 
and proceed similarly for the second equation of $\boldsymbol{({\rm P})}$.
\end{remark}

\begin{proof}
Let $(u,v)\in\mathcal{C}$. We first prove the existence of $T_1(u,v)\in{\rm W}^{1,p}_0(\Omega)$, where $T_1(u,v)$ is defined in \eqref{T1} with $h_1(x,u)\eqdef\kappa_1 d(x)^{\delta_1-\alpha_1}u$ in $\Omega\times[\underline{u},\overline{u}]$.
For that, let us introduce the following problem :
$$\boldsymbol{({\rm Q})}
\left\lbrace
\begin{array}{l}
 -\Delta_{p}w +g_1(x,w)=f_1(x,u,v)+\kappa_1d(x)^{\delta_1-\alpha_1} u\;\text{ in }\,\Omega,\vspace{0.2cm}\\
  \;\;w\vert_{\partial\Omega}=0,\quad w>0\quad\text{in }\Omega,\\
\end{array}
\right.
$$ 
with $g_1:\Omega\times \RR\rightarrow \RR^{\ast}_+$ the cut-off function defined as follows:
\be g_1(x,z)\eqdef\left\lbrace\begin{array}{cl}
\kappa_1d(x)^{\delta_1-\alpha_1}\overline{u}&\quad\text{if } z\geq\overline{u}(x),\\
\kappa_1d(x)^{\delta_1-\alpha_1}z & \quad\text{if }z\in\left[0,\overline{u}(x)\right],\\
0&\quad \text{if }z\leq 0.
\end{array}
\right. 
\ee 
Then, $g_1$ is a Carath\'eodory function on $\Omega\times\RR$. Thus, for $(x,s)\in\Omega\times\RR$, setting $\displaystyle G_1(x,s)\eqdef\int_0^sg_1(x,z)dz$, we consider the following functional: $\forall w\in{\rm W}^{1,p}_0(\Omega),$
{\small $$E(w)\eqdef\frac{1}{p}\int_{\Omega}\vert \nabla w\vert^p\,dx+\int_{\Omega}G_1(x,w)\,dx-\int_{\Omega}\left(f_1(x,u,v)+\kappa_1d(x)^{\delta_1-\alpha_1}u\right)w\,dx.$$}
By assumption \eqref{delta1} and Hardy's inequality, $E$ is well defined in ${\rm W}^{1,p}_0(\Omega)$ and for all $w\in{\rm W}^{1,p}_0(\Omega)$,
{\small \be\label{fonc} E(w)\geq \\\frac{1}{p}\Vert w\Vert^p_{{\rm W}^{1,p}_0(\Omega)}-C\left\Vert \left(f_1(x,{u},v)+\kappa_1 d(x)^{\delta_1-\alpha_1}u\right)d(x)\right\Vert_{{\rm L}^{p'}(\Omega)}\Vert w\Vert_{{\rm W}^{1,p}_0(\Omega)}.\ee} So, let us define 
\be I\eqdef \inf\limits_{w\in{\rm W}^{1,p}_0(\Omega)}E(w)\ee
and let $\left(w_n\right)_{n\in\NN}\subset {\rm W}^{1,p}_0(\Omega)$
 be a minimizing sequence of $E$, \textit{i.e. }$\lim\limits_{n\to\infty} E(w_n)=I$. Using \eqref{fonc}, $\left(w_n\right)_{n\in\NN}$ is bounded in ${\rm W}^{1,p}_0(\Omega)$, therefore there exists a subsequence $\left(w_{n_k}\right)_{k\in \NN}$ and $\tilde{u}\in {\rm W}^{1,p}_0(\Omega)$ such that $w_{n_k}\lOngrightarrow\limits_{k\to\infty}\tilde{u}$, weakly in ${\rm W}^{1,p}_0(\Omega)$ and a.e. in $\Omega$. Therefore,
$$ \liminf\limits_{k\to\infty}\Vert w_{n_k}\Vert_{{\rm W}^{1,p}_0(\Omega)}\geq\Vert \tilde{u}\Vert_{{\rm W}^{1,p}_0(\Omega)}$$
and using Fatou's lemma, 
 $$\liminf\limits_{k\to \infty}\int_{\Omega}G_1(x,w_{n_k}) dx \geq \int_{\Omega}\liminf\limits_{k\to\infty} G_1(x,w_{n_k})\,dx=\int_{\Omega}G_1(x,\tilde{u}) dx.$$ Hence, $E(\tilde{u})=I$ and $\tilde{u}$ is a solution to the Euler-Lagrange equation associated to $E_0$, { that is}: 
 \begin{multline}\label{EL}\quad \int_{\Omega}\vert \nabla \tilde{u}\vert^{p-2}\nabla \tilde{u}.\nabla w\,dx+\int_{\Omega}g_1(x,\tilde{u})w\,dx\\
=\int_{\Omega}\left(f_1(x,u,v)+\kappa_1 d(x)^{\delta_1-\alpha_1}u\right) w\,dx,\end{multline}
for any $ w\in{\rm W}^{1,p}_0(\Omega).$ Now let us prove that $\tilde{u}\in[\underline{u},\overline{u}]$. Combining \eqref{soussol} and \eqref{EL}, we get for all $ w\in {\rm W}^{1,p}_0(\Omega)^+\eqdef\lbrace{w\in{\rm W}^{1,p}_0(\Omega),\; w\geq 0\; \text{ a.e in }\Omega\rbrace}$,
\begin{small}
\begin{multline}
\int_{\Omega}\left(\vert \nabla \tilde{u}\vert^{p-2}\nabla \tilde{u}-\vert \nabla\underline{u}\vert^{p-2}\nabla \underline{u}\,\right). \nabla w\, dx
+ \int_{\Omega} \left(g_1(x,\tilde{u})-g_1(x,\underline{u})\right)w\,dx\\
\geq \int_{\Omega}\left[ \left(f_1(x,u,v)+\kappa_1d(x)^{\delta_1-\alpha_1}u\right)-\left(f_1(x,\underline{u},v)+\kappa_1d(x)^{\delta_1-\alpha_1}\underline{u}\right)\right]w\, dx.
\end{multline}
\end{small}
By assumption \eqref{df1}, applying this inequality with $w=(\tilde{u}-\underline{u})^-\in{\rm W}^{1,p}_0(\Omega)^+$, we get $\tilde{u}\geq \underline{u}$ a.e. in $\Omega$. Similarly, combining \eqref{sursolbisbis} and \eqref{EL} we also get $\tilde{u}\leq \overline{u}$ a.e. in $\Omega$. Then, $\tilde{u}$ satisfies the equation 
\be\label{T1(u,v)}-\Delta_p \tilde{u}+\kappa_1d(x)^{\delta_1-\alpha_1}\tilde{u}=f_1(x,u,v)+\kappa_1d(x)^{\delta_1-\alpha_1} u \quad\text{ in }\Omega,\ee
in the weak sense. Moreover, using a classical local regularity result in \cite{Ser}, $\tilde{u}\in\mathscr{C}^{1,\gamma}\left({K}\right)$ for some $\gamma>0$ in any compact subset $K$ of $\Omega$. So using inequality \eqref{ineqsursol}, $\tilde{u}\in \mathscr{C}\left(\overline{\Omega}\right)$, which gives us that $\tilde{u}\in[\underline{u},\overline{u}]$. Finally, by the weak maximum principle, $\tilde{u}$ is the unique function in the conical shell $[\underline{u},\overline{u}]$ satisfying \eqref{T1(u,v)}. Then, the mapping $T_1: (u,v)\mapsto\tilde{u}$ is well-defined from $\mathcal{C}$ to $[\underline{u},\overline{u}]$. In the same spirit, we get the existence of the mapping $T_2:(u,v)\mapsto \tilde{v}$ defined from $\mathcal{C}$ to $[\underline{v},\overline{v}]$, where $\tilde{v}$ is the unique weak solution in $[\underline{v},\overline{v}]$ of
\be-\Delta_p \tilde{v}+\kappa_2d(x)^{\delta_2-\alpha_2}\tilde{v}=f_2(x,u,v)+\kappa_2d(x)^{\delta_2-\alpha_2} v\quad\text{ in }\Omega.\ee
This proves that the operator $T$ defined in \eqref{Tbis} is well-defined and makes invariant the conical shell ${\mathcal C}$.
\\
\\
Now, the continuity and the compactness of $T$ follow from a regularity result in \cite{GST1} we recall in appendix A. Indeed, let $(u_n,v_n)_{n\in\NN}\subset\mathcal{C}$ and $({u},{v})\in\mathcal{C}$ such that 
$(u_n,v_n)\rightarrow(u,{v})\text{ in }\mathscr{C}\left(\overline{\Omega}\right)\times\mathscr{C}\left(\overline{\Omega}\right)$ as $n\to+\infty$.
Then, from Theorem \ref{regu} and assumptions \eqref{f1}, $\left(T_1(u_n,v_n)=\tilde{u}_n\right)_{n\in\NN}$  is bounded in $\mathscr{C}^{0,\alpha}(\overline{\Omega})$, for some $0<\alpha<1$. By Ascoli-Arzel\`a theorem, there exists a subsequence $(\tilde{u}_{n_k})_{k\in\NN}$ and $\tilde{u}\in[\underline{u},\overline{u}]$ such that  
$\tilde{u}_{n_k}\rightarrow\tilde{u}$ uniformly in $\overline{\Omega}$ as $k\to\infty$.
Moreover, using the local regularity result in \cite{Ser}, $(\tilde{u}_{n_k})_{k\in\NN}$ is bounded in $\mathscr{C}^{1,\gamma}\left({K}\right)$
for some $\gamma>0$ and for any compact subset $K$ of $\Omega$ which entails that up to a subsequence denoted again $(\tilde{u}_{n_k})_{k\in\NN}$ such that $\nabla \tilde{u}_{n_k}\rightarrow\nabla \tilde{u} $ uniformly  in $K$ as $k\to+\infty$.
Then, $\tilde{u}$ satisfies
\be\label{distributions}-\Delta_p\tilde{u}+\kappa_1d(x)^{\delta_1-\alpha_1}\tilde{u}=f_1(x,u,v)+\kappa_1d(x)^{\delta_1-\alpha_1}u\quad\text{ in }\Omega\ee
in the sense of distributions.
Moreover, since $\tilde{u}\leq \overline{u}$ a.e in $\Omega$, $f_1(x,u,v)+\kappa_1d(x)^{\delta_1-\alpha_1}(u-\tilde{u})\in{\rm W}^{-1,p'}(\Omega)$, which implies that $\tilde{u}\in {\rm W}^{ 1,p}_0(\Omega)$. Hence $\tilde{u}\in[\underline{u},\overline{u}]$ and is a weak solution of \eqref{distributions}. By uniqueness of a such solution in $[\underline{u},\overline{u}]$, it follows that $\tilde{u}=T_1(u,v)$ and all the sequence $(\tilde{u}_n)_{n\in\NN}$ converges to $\tilde{u}$ in $\mathscr{C}(\overline{\Omega})$. The same arguments hold to prove that $T_2(u_n,v_n)\to T_2(u,v)$ uniformly in $\overline{\Omega}$ as $n\to+\infty$. Then, $T:\mathcal{C}\to\mathcal{C}$ is continuous.
Finally, it easy from the compact embedding of $\mathscr{C}^{0,\alpha}(\overline{\Omega})$ in $\mathscr{C}(\overline{\Omega})$ to  get the compactness of $T$. Applying the Schauder Fixed Point Theorem to $T$ in $\mathcal{C}$, the proof  of Theorem \ref{TH1} is now complete.
\end{proof}
We now give a more general result which guarantees the existence of a "very weak" positive solutions pair, in the cooperative case, when the inequalities \eqref{delta1} { may not be satisfied}.
\begin{theorem}\label{TH2}
Assume that $ \boldsymbol{({\rm P})}$ is a \textit{\textbf{cooperative system}}, \textit{i.e.} 
\be\label{coop} \frac{\partial f_1}{\partial v}(x,u,v)>0\quad \text{and }\quad\frac{\partial f_2}{\partial u}(x,u,v)>0\quad\text{ in  }\Omega\times\RR_+^{\ast}\times\RR_+^{\ast}.\ee
Let $(\underline{u},\underline{v}),(\overline{u},\overline{v})$ $\in\left[\mathscr{C}(\overline{\Omega})\cap\mathrm{W}^{1,p}_{\rm loc}(\Omega)\right]\times\left[\mathscr{C}(\overline{\Omega})\cap \mathrm{W}^{1,q}_{\rm loc}(\Omega)\right]$ be sub and supersolutions pairs to $\boldsymbol{({\rm P})}$. 
Assume in addition that the following conditions hold:
\begin{enumerate}
\item
there exist constants $C_1,C_2>0$ and $b_1,b_2>0$ such that 
\be\label{sssolu}
\overline{u}\leq C_1 d(x)^{b_1}\quad\text{ and }\quad\overline{v}\leq C_2 d(x)^{b_2}\quad\text{in }\Omega,
\ee
\item there exist $\kappa_1,\kappa_2>0$ and $\delta_1,\delta_2\in \RR$ such that
\be\label{partial1}
\left|\frac{\partial f_1}{\partial u}(x,u,v)\right|\leq \kappa_1d(x)^{\delta_1}\;\text{and}\;\left|\frac{\partial f_2}{\partial v}(x,u,v)\right|\leq  \kappa_2d(x)^{\delta_2}\text{ in }\Omega\times\mathcal{C}.
\ee
\end{enumerate}
Then, there exists a positive very weak solution pair $(u,v)\in \left[{\rm L}^{\infty}(\Omega)\cap{\rm W}^{1,p}_{\rm loc}(\Omega)\right]\times \left[{\rm L}^{\infty}(\Omega)\cap{\rm W}^{1,q}_{\rm loc}(\Omega)\right]$  to $\boldsymbol{({\rm P})}$ such that $(u,v)\in \mathcal{C}$.
\end{theorem}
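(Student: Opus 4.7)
The plan is to run a cooperative monotone iteration whose single-equation fixed-point operators are built by exhausting $\Omega$ from inside, in order to circumvent the failure of (\ref{delta1}) and produce only a distributional, rather than $\mathrm{W}^{1,p}_0(\Omega)$, solution.

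First, for each $v\in[\underline v,\overline v]$, I would construct $u=T_1(v)\in[\underline u,\overline u]$ as a very weak solution of the scalar problem $-\Delta_p u=f_1(x,u,v)$ in $\Omega$. Because $f_1(\cdot,u,v)$ need not lie in $\mathrm{W}^{-1,p'}(\Omega)$, I work on the exhaustion $\Omega_n\eqdef\{x\in\Omega:d(x)>1/n\}$, with boundary data from $\overline u$ prescribed on $\partial\Omega_n$. On each $\Omega_n$ I would mimic the variational construction of the proof of Theorem \ref{TH1}: the cut-off linearization $g_1(x,z)=\kappa_1 d(x)^{\delta_1}z$ is now well-defined thanks to the bound (\ref{partial1}) on $\partial_u f_1$, and the associated energy is coercive on $\mathrm{W}^{1,p}(\Omega_n)$ since everything is uniformly bounded away from $\partial\Omega$. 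Minimizing and invoking the weak comparison principle (whose hypotheses are secured by (\ref{partial1})) places the minimizer $u_n$ inside $[\underline u,\overline u]$ and guarantees uniqueness there. Using the uniform confinement $\underline u\leq u_n\leq\overline u$ and the interior regularity of \textsc{Serrin} \cite{Ser}, the $u_n$'s are bounded in $\mathscr{C}^{1,\gamma}(K)$ on every compact $K\subset\Omega$, hence converge along a subsequence in $\mathscr{C}^1_{\rm loc}(\Omega)$ to some $u\in[\underline u,\overline u]\cap \mathrm{W}^{1,p}_{\rm loc}(\Omega)\cap \mathrm{L}^\infty(\Omega)$ that satisfies $-\Delta_p u=f_1(x,u,v)$ in $\mathscr{D}'(\Omega)$. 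Define $T_2:[\underline u,\overline u]\to[\underline v,\overline v]$ symmetrically.

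Second, the cooperative condition (\ref{coop}) combined with the weak comparison principle makes $T_1$ and $T_2$ order-preserving: if $v_1\leq v_2$, then $f_1(x,u,v_1)\leq f_1(x,u,v_2)$ forces $T_1(v_1)\leq T_1(v_2)$, and likewise for $T_2$. Starting from $(u_0,v_0)=(\underline u,\underline v)$ and iterating $u_{n+1}=T_1(v_n)$, $v_{n+1}=T_2(u_{n+1})$, the sub/supersolution inequalities together with these comparisons yield monotone nondecreasing sequences in $\mathcal{C}$, which therefore converge pointwise to some $(u,v)\in\mathcal{C}$. Interior $\mathscr{C}^1_{\rm loc}$ compactness (again from \cite{Ser}) upgrades the convergence so that one can pass to the limit in the distributional formulation, and dominated convergence handles the lower-order terms. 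The required $\mathrm{L}^\infty$ bound is immediate from (\ref{sssolu}), while membership in $\mathrm{W}^{1,p}_{\rm loc}\times\mathrm{W}^{1,q}_{\rm loc}$ follows from the local estimates.

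The main obstacle is the first step, namely constructing $T_1(v)$ without the Hardy-type control (\ref{delta1}) of Theorem \ref{TH1} that guaranteed $f_1\in\mathrm{W}^{-1,p'}(\Omega)$ and thus allowed a direct global minimization. The exhaustion by $\Omega_n$ substitutes for this control, at the price of producing only a distributional solution; the supersolution envelope $\overline u\leq C_1d^{b_1}$ and the interior $p$-Laplacian regularity then have to do the double duty of providing both the compactness required to pass to the limit $n\to\infty$ and the boundary decay needed to identify the limit as an admissible element of $\mathcal{C}$.
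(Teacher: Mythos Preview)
Your approach shares the essential ingredients with the paper's proof---domain exhaustion $\Omega_n\nearrow\Omega$, a monotone iteration driven by the cooperative structure \eqref{coop}, and interior $\mathscr{C}^{1,\gamma}$ compactness from \cite{Ser} to pass to the limit---but the way you organize these creates a genuine gap.

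The paper does \emph{not} first construct operators $T_1,T_2$ as very weak solution maps and then iterate them. Instead it runs a \emph{single} diagonal iteration: at step $n$ one solves on $\Omega_n$ with right-hand side built from the previous iterate $(\tilde u_{n-1},\tilde v_{n-1})$ and with boundary data $\underline u,\underline v$ (not $\overline u$). The extensions $\tilde u_n=\mathds{1}_{\Omega_n}u_n+\mathds{1}_{\Omega\setminus\Omega_n}\underline u$ are then shown to be nondecreasing in $n$ by checking that $\tilde u_n$ is itself a subsolution on $\Omega_{n+1}$ (this uses both the cooperation $\partial_v f_1>0$ and the fact that outside $\Omega_n$ the extension equals the subsolution $\underline u$). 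Monotonicity gives convergence of the \emph{full} sequence, and the limit is the desired very weak solution.

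Your two-layer scheme requires $T_1(v)$ to be well-defined, but you obtain it only as a $\mathscr{C}^1_{\rm loc}$ limit along a \emph{subsequence} of the $u_n$'s; different subsequences might yield different limits, since you have no uniqueness result for very weak solutions of $-\Delta_p u=f_1(x,u,v)$ in $[\underline u,\overline u]$. Without $T_1$ being a map, the subsequent order-preservation claim and the outer iteration both collapse. This can be repaired---with boundary data $\overline u$ one can argue that the $u_n$ are nonincreasing by gluing and comparison, mirroring the paper's argument with signs reversed---but you have not supplied that argument, and the comparison of very weak solutions you invoke in Step~2 would also have to be justified at the approximate level and passed to the limit. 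The paper's single iteration avoids all of this by never defining $T_1,T_2$ at all.
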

\begin{remark} Since $f_1$ and $f_2$ are continuous with respect to the two last variables in $\RR^{\ast}_+\times\RR^{\ast}_+$, assumptions \eqref{sssolu} and \eqref{partial1} imply that for any $K\subset\subset\Omega$, there exist $C_K, C'_K>0$ such that
\be\label{condf1}
\left| f_1(x,u,v)\right|\leq C_K\quad\text{ and }\quad\left| f_2(x,u,v)\right|\leq C'_K\quad\text{ in }K\times\mathcal{C}.
\ee
\end{remark}
\begin{proof} 
Since $\Omega$ is a smooth domain, we can introduce
$\left( \Omega_n\right)_{n\in\NN^{\ast}}\subset \Omega$ an increasing sequence of smooth subdomains of $\Omega$ such that $\Omega_n\lOngrightarrow\limits_{n\to\infty}\Omega$ in the Hausdorff topology with
\begin{small}$$\forall n\in\NN^{\ast},\quad\frac{1}{n+1}<\mathrm{dist}(\partial\Omega,\partial\Omega_n)<\frac{1}{n}.$$ \end{small}
Then, for all $n\in \NN^{\ast}$ we consider the following iterative scheme: 
$$\boldsymbol{({\rm P}_{n})}\left\lbrace\begin{array}{l}
-\Delta_{p}u_{n}+\kappa_1d(x)^{\delta_1} u_{n} =f_1(x,\tilde{u}_{n-1},\tilde{v}_{n-1})+\kappa_1d(x)^{\delta_1}\tilde{u}_{n-1}\quad\text{ in }\,\Omega_n,\vspace{0.2cm}\\
-\Delta_{q}v_{n}+\kappa_2d(x)^{\delta_2}v_{n} =f_2(x,\tilde{u}_{n-1},\tilde{v}_{n-1})+\kappa_2d(x)^{\delta_2}\tilde{v}_{n-1}\quad\text{ in }\,\Omega_n,\vspace{0.2cm}\\
\quad u_{n}\vert_{\partial\Omega_n}=\underline{u},\quad v_{n}\vert_{\partial\Omega_n}=\underline{v}\quad\text{and}\quad  u_{n}>0, \quad v_{n}>0\quad\text{in } \Omega_n,
 \end{array}
 \right.
$$
with initial data $u_0=\underline{u}$ and $v_0=\underline{v}$ in $\Omega_0$ and for all $n\in\NN$, $$\tilde{u}_n\eqdef {\mathds{1}}_{\Omega_n}.u_n+{\mathds{1}}_{\Omega\setminus\Omega_n}.\underline{u}\quad\text{ and }\quad\tilde{v}_n\eqdef {\mathds{1}}_{\Omega_n}.v_n+{\mathds{1}}_{\Omega\setminus\Omega_n}.\underline{v}\quad\text{ in }\quad\Omega.$$
By induction on $n\in\NN^{\ast}$, $\boldsymbol{({\rm P}_{n})}$ has a solution $(u_n,v_n)\in{\rm W}^{1,p}(\Omega_n)\times{\rm W}^{1,q}(\Omega_n)$ satisfying   for all $n\in\NN^{\ast},$
\be\label{croissance} \underline{u}\leq \tilde{u}_n\leq\tilde{u}_{n+1}\leq\overline{u}\quad \text{ and }\quad \underline{v}\leq \tilde{v}_n\leq\tilde{v}_{n+1}\leq\overline{v}\quad \text{ a.e. in }\Omega.\ee
Indeed, using estimates \eqref{sssolu} and \eqref{condf1}, 
$$f_1(x,\underline{u},\underline{v})+\kappa_1d(x)^{\delta_1}\underline{u}\in {\rm L}^{\infty}(\Omega_1)\hookrightarrow{\rm W}^{-1, p'}(\Omega_1)$$
and since $\underline{u}\in {\rm W}^{1,p}(\Omega_1)$  $\hookrightarrow{\rm W}^{1/p',p}(\partial\Omega_1)$ in the sense of the traces, we get $u_1\in {\rm W}^{1,p}(\Omega_1)$ as a minimum of the functional $E_1$ defined for $w\in{\rm W}^{1,p}(\Omega_1)$ by
\begin{multline}E_1(w)\eqdef \displaystyle\frac{1}{p}\int_{\Omega_1}\vert \nabla(w+\underline{u})\vert^p\,dx
+ \frac{\kappa_1}{2}\int_{\Omega_1}d(x)^{\delta_1}(w+\underline{u})^2\,dx\\
\displaystyle~\;-\int_{\Omega_1} (f_1(x,\underline{u},\underline{v})+\kappa_1 d(x)^{\delta_1}\underline{u})w\,dx.
\end{multline}
Since the operator $u\mapsto -\Delta_pu+\kappa_1d(x)^{\delta_1}u$ is monotone in ${\rm W}^{1,p}(\Omega_1)$, applying the weak comparison principle we get 
$$\underline{u}\leq u_1 \leq \overline{u} \quad\text{a.e. in }\Omega_1.$$ 
Using the same arguments as above, we prove the existence of $v_1\in {\rm W}^{1,q}(\Omega_1)$ satisfying $\underline{v}\leq v_1 \leq \overline{v} $ a.e. in $\Omega_1.$
Now, let us fix $n\in\NN^{\ast}$ and suppose that for all $k\leq n$, $\boldsymbol{({\rm P}_{k})}$ has a solution $(u_k,v_k)\in{\rm W}^{1,p}(\Omega_k)\times{\rm W}^{1,q}(\Omega_k)$ satisfying \eqref{croissance}. The existence of positive solutions of $\boldsymbol{({\rm P}_{n+1})}$, $(u_{n+1},v_{n+1})\in{\rm W}^{1,p}(\Omega_{n+1})\times{\rm W}^{1,q}(\Omega_{n+1})$ satisfying 
$$\underline{u}\leq u_{n+1}\leq\overline{u}\quad\text{ and }\quad\underline{v}\leq v_{n+1}\leq \overline{v}\quad\text{ a.e. in }\Omega_{n+1},$$
 can be established using similar techniques as above.  To prove the monotonicity of the sequences $\left(\tilde{u}_m\right)_{m\in\NN^{\ast}}$ and $\left(\tilde{v}_m\right)_{m\in\NN^{\ast}}$, we remark that $\tilde{u}_n\in {\rm W}^{1,p}(\Omega_{n+1})$ and satisfies 
\be\label{balibali}-\Delta_p\tilde{u}_n+\kappa_1d(x)^{\delta_1}\tilde{u}_n\leq f_1(x,\tilde{u}_{n-1},\tilde{v}_{n-1})+\kappa_1d(x)^{\delta_1}\tilde{u}_{n-1}\quad\text{ in }\Omega_{n+1},\ee
in the weak sense. Then, using \eqref{balibali} together with \eqref{condf1}, we deduce from the previous inequality that,
$$-\Delta_p\tilde{u}_n+\kappa_1d(x)^{\delta_1}\tilde{u}_n\leq f_1(x,\tilde{u}_{n-1},\tilde{v}_{n})+\kappa_1d(x)^{\delta_1}\tilde{u}_{n-1}\quad\text{ in }\Omega_{n+1},$$
in the weak sense. Hence, by estimate \eqref{partial1} and from the weak  comparison principle applied in ${\rm W}^{1,p}(\Omega_{n+1})$, we obtain
$$ \tilde{u}_n\leq u_{n+1}\quad\text{a.e. in }\Omega_{n+1}.$$
Similarly, we get the existence and the behaviour of $v_{n+1}$. 
Then, for almost every $x\in\Omega$, we define 
$$u(x)=\lim\limits_{n\to\infty}\tilde{u}_n(x)\quad\text{ and }\quad v(x)=\lim\limits_{n\to\infty}\tilde{v}_n(x).$$
Moreover, using a classical local regularity result of  {\sc Serrin}\cite{Ser}, $\tilde{u}_n,\tilde{v}_n\in\mathscr{C}^{1,\gamma}_{\rm loc}\left({\Omega}_n\right)$ { for some} $0<\gamma<1$ and $\nabla \tilde{u}_n\lOngrightarrow\limits_{n\to \infty}\nabla u $ and $\nabla \tilde{v}_n\lOngrightarrow\limits_{n\to \infty}\nabla v,$ uniformly  in any compact set  $K$ of $\Omega.$  Thus, $(u,v)\in\left[\underline{u},\overline{u}\right]\times\left[\underline{v},\overline{v}\right]$ and passing to the limit in $\boldsymbol{({\rm P}_{n})}$, $(u,v)$ is a solution of $\boldsymbol{({\rm P})}$ in the sense of distributions.
\end{proof}
\section{Applications}
\subsection{Example 1}\label{ex1}
In this section we focus on the following quasilinear elliptic and singular system, 
$$\boldsymbol{({\rm P})}\left\lbrace 
\begin{array}{ll}
          -\Delta_pu=K_1(x)u^{a_1}v^{b_1}\quad\text{in }\Omega\, ;\quad u|_{\partial\Omega}=0,\quad u>0\quad\text{in }\Omega,\vspace{0.2cm}\\
          -\Delta_qv=K_2(x)v^{a_2}u^{b_2}\quad\text{in }\Omega\, ;\quad v|_{\partial\Omega}=0,\quad v>0\quad\text{in }\Omega.
 \end{array}
\right.
$$
\\~
In this problem, 
\begin{enumerate}
\item The exponents $a_1<p-1$, $a_2<q-1$ and $b_1,b_2\neq 0$ satisfy the subhomogeneous condition
\be\label{H1}{(p-1-a_1)(q-1-a_2)- \vert b_1b_2\vert >0,}\ee
which  is equivalent to the existence of a positive constant $\sigma>0$ such that 
\be\label{condexp1-0}  (p-1-a_1)-\sigma\vert b_1\vert>0\quad\text{ and }\quad \sigma(q-1-a_2)-\vert b_2\vert>0. \ee 
\item $K_1,K_2$ are two positive functions in $\Omega$ { satisfying}
\be\label{K1} K_1(x)=d(x)^{-k_1}L_1(d(x))\;\text{and}\; K_2(x)= d(x)^{-k_2}L_2(d(x))\;\text{in }\Omega,\ee
with $0\leq k_1<p$, $0\leq k_2<q$ and for $i=1,2$, $L_i$ a lower perturbation in $\mathscr{C}^{2}((0,D])$ ($ D$ the diameter of the domain $\Omega$), of the form:
\be\label{L}\forall t\in (0,D],\; L_i(t)=\exp\left(\int_t^{2D}\frac{z_i(s)}{s}ds\right),\ee
with $ z_i\in\mathscr{C}([0,D])\cap \mathscr{C}^1((0,D])$ and $z_i(0)=0$.
\begin{remark}\label{PropL}
\begin{enumerate}
\item Let us notice that \eqref{L} implies that 
\be\label{ln}\forall \varepsilon>0,\qquad \lim\limits_{t\to 0^+}t^{-\varepsilon}L_i(t)=+\infty\quad\text{ and }\quad \lim\limits_{t\to 0^+}t^{\varepsilon}L_i(t)=0.\ee
\item Definition \eqref{L} also implies that 
\be\label{tL'/L}\lim\limits_{t\to 0^+}\frac{tL_i'(t)}{L_i(t)}=0\quad\text{ and }\quad\lim\limits_{t\to 0^+}\frac{tL_i''(t)}{L_i'(t)}=-1\nonumber.\ee
\item If $L_1,L_2$ are two functions satisfying \eqref{L}, then for any $\alpha,\beta\in\RR$, the function ${L_1}^{\alpha}.{L_2}^{\beta}$ also satisfies \eqref{L}.
\item Such functions $L_1,$ $L_2$ defined as above belong to the Karamata Class \cite{Kar}.
\end{enumerate}
\end{remark}
\begin{example} Let $m\in\NN^{\ast}$ and $A>>D$ large enough. Let us define 
\begin{small}$$\forall t\in(0,D],\qquad \displaystyle L_i(t)=\prod_{n=1}^m\left(\log_n\left(\frac{A}{t}\right)\right)^{\mu_n},$$                                   \end{small} where, $\log_n\eqdef\log \circ\cdots \circ\log$ ($n$ times) and $\mu_n>0$. Then $L_i$ satisfies \eqref{L}.
\end{example}
\end{enumerate}
In our study, $b_1\neq 0$ and $b_2\neq 0$. In the case where $b_1>0$ and $b_2>0$, the expression of the right-hand sides of the two coupled equations in system $\textbf{(P)}$ define a \textbf{\textit{cooperative}} interaction between the two components (species) $u$ and $v$: 
\be\label{dp1}\frac{\partial}{\partial v}\left(K_1(x)u^{a_1}v^{b_1}\right)=b_1K_1(x)u^{a_1}v^{b_1-1}>0,\ee
\be\label{dp2}\frac{\partial}{\partial u}\left(K_2(x)v^{a_2}u^{b_2}\right)=b_2K_2(x)v^{a_2}u^{b_2-1}>0.\ee
In the case where $b_1<0$ and $b_2<0$, both partial derivative in \eqref{dp1} and \eqref{dp2} are negative and the expression of the right-hand sides of the two coupled equations of $\textbf{(P)}$ defines a \textbf{\textit{competitive}} interaction between $u$ and $v$.  
\\

First, we discuss the existence of positive weak solutions pairs to problem $\boldsymbol{({\rm P})}$. For that, regarding Theorem \ref{TH1},  we take $$f_1(x,u,v)=K_1(x)u^{a_1}v^{b_1},\quad f_2(x,u,v)=K_2(x)v^{a_2}u^{b_2}$$ and construct suitable sub and supersolutions pairs of $\textbf{(P)}$ in ${\rm W}^{1,p}_0(\Omega)\times{\rm W}^{1,q}_0(\Omega)$.
 
Then, in the cases where $\boldsymbol{({\rm P})}$ is either competitive or cooperative, we investigate the uniqueness of such positive weak solutions pairs. For that, it is essential that the mappings $T_1\circ T_2$ and $T_2\circ T_1$ (where $T_1$ and $T_2$ are defined in \eqref{T}) is \textit{\textbf{subhomogeneous}}, which is equivalent to condition \eqref{H1}.

\subsubsection{Preliminary results}

Let $1<r<\infty$, $\delta<r-1$ and $K:x\longmapsto d(x)^{-k}L(d(x))$, with $0\leq k<r$ and $L$ a perturbation function satisfying \eqref{L}. In view of constructing suitable pairs of sub and supersolutions to $\boldsymbol{({\rm P})}$, we first introduce the following problem:
\be\label{eq}-\Delta_r w=K(x)w^{\delta}\quad\text{ in }\Omega;\quad w\vert_{\partial\Omega}=0,\quad w>0\quad\text{ in }\Omega.\ee
\begin{theorem}\label{gms}
Under the above hypothesis, we have:
\begin{enumerate}
\item If $k-1<\delta<r-1$, problem \eqref{eq} has a unique positive weak solution $\psi\in {\rm W}^{1,r}_0(\Omega)$ that satisfies the following estimate:
\be \psi(x)\,\sim\,d(x)\qquad\text{in }\Omega.\ee
In addition, we have $\psi\in\mathscr{C}^{1,\alpha}\left(\overline{\Omega}\right)$, for some $0<\alpha<1$.
\\
\item If $\delta=k-1$, problem \eqref{eq} has a unique positive weak solution $\psi\in \mathrm{W}^{1,r}_0(\Omega)$ that satisfies the following estimate:
 \be\label{critique}\psi(x)\,\sim\,d(x)\left(\int^{2D}_{d(x)}L(t)t^{-1}\,dt\right)^{\frac{1}{r-k}}\qquad\text{in }\Omega.\ee
 In addition, we have $\psi\in\mathscr{C}^{0,\alpha}\left(\overline{\Omega}\right)$, for some $0<\alpha<1$.
 \\
\item If $k-2+\frac{k-1}{r-1}<\delta<k-1$, problem \eqref{eq} has a unique positive weak solution $\psi\in \mathrm{W}^{1,r}_0(\Omega)$ that satisfies the following estimate:
\be \psi(x)\,\sim\,d(x)^{\frac{r-k}{r-1-\delta}}L(d(x))^{\frac{1}{r-1-\delta}}\qquad\text{in }\Omega.\ee
In addition, we have $\psi\in\mathscr{C}^{0,\alpha}\left(\overline{\Omega}\right)$, for some $0<\alpha<1$.
\\
\item If $\delta\leq k-2+\frac{k-1}{r-1}$, problem \eqref{eq} has at least one positive weak solution $\psi\in \mathrm{W}^{1,r}_{{\rm loc}}(\Omega)\cap\mathscr{C}_0\left(\overline{\Omega}\right)$ that satisfies the following estimate:
\be \psi(x)\,\sim\,d(x)^{\frac{r-k}{r-1-\delta}}L(d(x))^{\frac{1}{r-1-\delta}}\qquad\text{in }\Omega.\ee
 \end{enumerate}
\end{theorem}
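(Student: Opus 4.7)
The overall strategy is to prove the four cases by a unified method: construct explicit sub\-- and super\-solutions with the conjectured boundary behaviour, deduce existence of a weak solution between them by the Schauder/approximation scheme developed for the system (applied to the scalar equation with $p=q=r$, no coupling), and then exploit the subhomogeneity $\delta<r-1$ to get uniqueness via a Díaz--Saa type inequality for $-\Delta_{r}$. In the supercritical range of case (4) the singular right\--hand side does not lie in $\mathrm{W}^{-1,r'}(\Omega)$, so the final equation is solved only in the sense of distributions by the nested subdomain approximation used in the proof of Theorem~\ref{TH2}.

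The heart of the proof is the construction of barriers. Fix a neighbourhood of $\partial\Omega$ where $d\in \mathscr{C}^2$ and $|\nabla d|=1$; there $-\Delta_r d = -\Delta d$ is bounded. In case (1) one takes $\underline{\psi}=\varepsilon\varphi_{1,r}$ and for the supersolution the unique solution of $-\Delta_r W = M K(x)\chi_{\{d\geq d_0\}}+\tilde K(x)$ with a suitably mollified weight, so that $W\sim d$ near $\partial\Omega$; the condition $\delta>k-1$ makes $K(x)W^\delta$ integrable up to the boundary, whence $M\varphi_{1,r}$ works for large $M$. In cases (2)--(4) I would adopt the ansatz
\[
 \psi_{\text{ans}}(x)=\eta(d(x))\Phi(d(x)),
\]
with $\Phi(t)=t$ and $\eta(t)=(\int_t^{2D}L(s)\,s^{-1}ds)^{1/(r-k)}$ in the critical case (2), and $\psi_{\text{ans}}=d^{\gamma}L(d)^{1/(r-1-\delta)}$ with $\gamma=(r-k)/(r-1-\delta)$ in cases (3)--(4). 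A direct computation using $|\nabla d|=1$, Remark~\ref{PropL}, and the Karamata properties $tL'/L\to 0$, $tL''/L'\to -1$ gives, near $\partial\Omega$,
\[
 -\Delta_r \psi_{\text{ans}} = \bigl(A+o(1)\bigr)K(x)\psi_{\text{ans}}^{\delta},
\]
for an explicit positive constant $A$ depending only on $r,k,\delta$; multiplying the ansatz by small/large constants turns it into a sub/super\-solution near the boundary, and a standard gluing with a constant in the interior (using the weak comparison principle) produces global sub\-- and super\-solutions $\underline{\psi}\leq\overline{\psi}$ in $\Omega$. The condition $\delta>k-2+(k-1)/(r-1)$ in case (3) is exactly what ensures $\psi_{\text{ans}}\in \mathrm{W}^{1,r}_0(\Omega)$ (and $\delta=k-1$ is what makes the logarithmic factor appear in case (2)), while in case (4) it only gives $\mathrm{W}^{1,r}_{\mathrm{loc}}(\Omega)\cap\mathscr{C}_0(\overline{\Omega})$.

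With the barriers in hand, for cases (1)--(3) I would apply the scalar analogue of the argument of Theorem~\ref{TH1} (minimization of the truncated energy on the order interval $[\underline{\psi},\overline{\psi}]$ using an auxiliary $\kappa d^{\delta-\alpha}w$ absorption term) to produce a weak solution $\psi\in\mathrm{W}^{1,r}_0(\Omega)$ lying between the barriers, which immediately yields the stated equivalence $\psi\sim \psi_{\text{ans}}$. In case (4) I would instead run the iterative scheme $\boldsymbol{(\mathrm{P}_n)}$ from Theorem~\ref{TH2} on subdomains $\Omega_n\nearrow\Omega$ with boundary data $\underline{\psi}$, use the monotone iteration bracketed by $\underline{\psi},\overline{\psi}$ and the local $\mathscr{C}^{1,\gamma}$ bounds of Serrin to pass to the limit. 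Interior $\mathscr{C}^{1,\gamma}$ regularity comes from \cite{Ser}, and up--to--boundary Hölder regularity ($\mathscr{C}^{1,\alpha}$ in case (1), $\mathscr{C}^{0,\alpha}$ in cases (2)--(3)) from Lieberman \cite{Lie} applied once the precise boundary rate is known. Finally, uniqueness in (1)--(3) follows because $w\mapsto K(x)w^\delta/w^{r-1}$ is strictly decreasing (since $\delta<r-1$): testing the difference $-\Delta_r\psi_1+\Delta_r\psi_2$ with $\psi_1-\psi_2^{r}/\psi_1^{r-1}$ and its symmetric counterpart, both of which are admissible thanks to $\psi_i\sim\psi_{\text{ans}}$, yields $\psi_1\equiv\psi_2$.

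The main obstacle will be the barrier computation, particularly the critical case (2) and the verification that the error terms in $-\Delta_r\psi_{\text{ans}}$ are genuinely of lower order. The difficulty is that the leading behaviour cancels and one has to push the expansion one order further, carefully combining the two Karamata identities $tL'/L\to 0$ and $tL''/L'\to -1$ with the structure of the $r$\--Laplacian; a small mistake in the exponent bookkeeping produces the wrong constant $A$ and breaks the sub/super\-solution inequality. A secondary technical point is making the Díaz--Saa test functions $\psi_i-\psi_j^{r}/\psi_i^{r-1}$ rigorously admissible in cases (2)--(3), where $\psi$ is only Hölder (not $\mathscr{C}^1$) up to the boundary; this is handled by truncating in $\{d\geq 1/n\}$ and passing to the limit, using the boundary estimate \eqref{critique} to control the truncation error.
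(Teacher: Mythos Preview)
The paper does not prove Theorem~\ref{gms}: its entire proof reads ``See Lemma 3.3 in {\sc Giacomoni, M\^aagli, Sauvy}~\cite{GMS}.'' There is therefore no in-paper argument to compare against. Your proposal is a correct and essentially standard self-contained sketch---barrier construction via one-variable profiles in $d(x)$ exploiting the Karamata relations of Remark~\ref{PropL}, a sub/supersolution scheme to trap a solution between them, D{\'\i}az--Saa for uniqueness, and Serrin/Lieberman/Theorem~\ref{regu} for regularity---and it is almost certainly close to the approach actually carried out in \cite{GMS}.

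Two small cautions. First, in case~(i) the right-hand side $K(x)\psi^{\delta}\sim d(x)^{\delta-k}L(d(x))$ is in general unbounded (only $\delta-k>-1$ is guaranteed), so a bare citation of Lieberman~\cite{Lie} does not directly deliver $\mathscr{C}^{1,\alpha}(\overline{\Omega})$; one needs the refinement for mildly singular right-hand sides as in \cite{GST2}. Second, although invoking ``the argument of Theorems~\ref{TH1} and~\ref{TH2}'' for the scalar equation is logically sound (those proofs do not rely on Theorem~\ref{gms}), in this paper Theorem~\ref{gms} is precisely what is used to manufacture the sub/supersolutions that feed into Theorems~\ref{TH1} and~\ref{TH2} in Section~3; in a write-up you should present the scalar existence step as a standalone minimization/iteration argument rather than as an application of those theorems, to avoid any appearance of circularity.
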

\begin{proof} 
See Lemma 3.3  in  {\sc Giacomoni, M\^aagli, Sauvy}\cite{GMS}.
\end{proof}
\begin{remark}
In (iv) above, it can be proved that  $\forall\,\gamma>$\footnotesize $\frac{(r-1)(r-1-\delta)}{r(r-k)}$, ${\psi}^{\gamma}\in {\rm W}^{1,r}_0(\Omega)$.
\end{remark}
We give now a weak comparison principle used  to establish the uniqueness of a positive weak solutions pair of $\textbf{(P)}$.
\begin{theorem}\label{wmp}
Let $K:\Omega\rightarrow\RR_+$ be a ${\rm L}^{1}_{{\rm loc}}(\Omega)$ function and $\delta<r-1$. Assume  $u,v\in\mathrm{W}^{1,r}_0(\Omega)\cap{\rm L}^{\infty}(\Omega)$ are two locally uniformly positive functions satisfying the sub and supersolution inequalities:
\be\label{leq}-\Delta_ru\leq K(x)u^{\delta}\quad and \quad -\Delta_rv\geq K(x)v^{\delta}\qquad in\; \Omega,\ee
in the sense of distributions (i.e. Radon measures) in $\mathrm{W}^{-1,r'}(\Omega)$. Then
\begin{enumerate}
\item If $\delta<0$, inequality $u\leq v$ holds a.e. in $\Omega$.
\item If $\delta>0$ and if we suppose in addition that there exist $C_1,C_2>0$ and a locally uniformly positive function $w_0\in {\rm L}^{\infty}(\Omega)$ such that $C_1 w_0\leq u,v\leq C_2 w_0$ a.e. in $\Omega$ and 
\be\label{condinteg} \int_{\Omega}K(x){w_0}^{\delta+1}\,dx<+\infty,\ee
inequality $u\leq v$ holds a.e. in $\Omega$.
\end{enumerate}
\end{theorem}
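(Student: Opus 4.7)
The plan is to argue by contradiction in each case, combining the strict monotonicity of the $r$-Laplacian with an algebraic sign property of $t\mapsto t^{\delta}$ (for (i)) or $t\mapsto t^{\delta-r+1}$ (for (ii)). Suppose throughout that $\{u>v\}$ has positive measure.

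For (i), with $\delta<0$, I would take $\varphi=(u-v)^+\in\mathrm{W}^{1,r}_0(\Omega)$ as test function in the difference of the two inequalities. Since $t\mapsto t^\delta$ is decreasing, on $\{u>v\}$ one has $u^\delta\leq v^\delta$, so $K(u^\delta-v^\delta)(u-v)^+\leq 0$ pointwise a.e.; subtracting the weak inequalities gives
\[
\int_\Omega\bigl(|\nabla u|^{r-2}\nabla u-|\nabla v|^{r-2}\nabla v\bigr)\cdot\nabla(u-v)^+\,dx\leq\int_\Omega K(u^\delta-v^\delta)(u-v)^+\,dx\leq 0.
\]
The integrand on the left is nonnegative by the classical Lindqvist inequality, and strictly positive wherever $\nabla u\neq\nabla v$. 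Hence $\nabla(u-v)^+=0$ almost everywhere, and since $(u-v)^+\in\mathrm{W}^{1,r}_0(\Omega)$, one concludes $(u-v)^+\equiv 0$.

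For (ii), with $0<\delta<r-1$, the sign argument above fails because $t\mapsto t^\delta$ is now increasing, and I would instead resort to a Picone--D\'iaz--Sa\'a type identity. Test the first inequality with $\varphi_1=(u^r-v^r)^+/u^{r-1}$ and the second with $\varphi_2=(u^r-v^r)^+/v^{r-1}$; both lie in $\mathrm{W}^{1,r}_0(\Omega)\cap\mathrm{L}^\infty(\Omega)$ thanks to the pinching $C_1w_0\leq u,v\leq C_2w_0$, which keeps both denominators uniformly bounded away from zero. Subtracting and invoking Picone's inequality yields
\[
0\leq\int_\Omega\bigl(|\nabla u|^{r-2}\nabla u\cdot\nabla\varphi_1-|\nabla v|^{r-2}\nabla v\cdot\nabla\varphi_2\bigr)dx\leq\int_\Omega K\bigl(u^{\delta-r+1}-v^{\delta-r+1}\bigr)(u^r-v^r)^+\,dx,
\]
and the hypothesis $\delta-r+1<0$ forces the right-hand integrand to be $\leq 0$ on $\{u>v\}$. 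Its integrability is precisely what \eqref{condinteg} guarantees, via the pointwise domination $K\,|u^{\delta-r+1}-v^{\delta-r+1}|\,(u^r-v^r)^+\lesssim Kw_0^{\delta+1}$. Both sides must therefore vanish, and the strict part of Picone's identity yields $(u^r-v^r)^+\equiv 0$, i.e.\ $u\leq v$ a.e.

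The chief technical obstacle is legitimising $\varphi,\varphi_1,\varphi_2$ as test functions in the distributional inequalities, since $Ku^\delta$ and $Kv^\delta$ need not globally lie in $\mathrm{W}^{-1,r'}(\Omega)$. I would proceed by cutoff approximation, multiplying the test functions by $\chi_n\in\mathrm{C}^\infty_c(\Omega)$ with $\chi_n\equiv 1$ on $\{d(x)>2/n\}$ and $\chi_n\equiv 0$ on $\{d(x)<1/n\}$, where $u,v$ are locally bounded and locally uniformly positive so that all integrands make sense, and then letting $n\to\infty$. In (i) the favourable sign of the right-hand side makes Fatou's lemma together with monotone convergence sufficient; in (ii) the uniform domination by $Kw_0^{\delta+1}$ afforded by \eqref{condinteg} enables dominated convergence. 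A secondary subtlety in (ii) is the rigorous verification of Picone's identity in a weak form adapted to $\mathrm{W}^{1,r}_0\cap\mathrm{L}^\infty$ functions that are only pinched by $w_0$, which one obtains by approximating $u$ and $v$ by smooth uniformly positive functions, using the pointwise Picone inequality, and passing to the limit.
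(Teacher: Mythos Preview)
Your argument for (i) coincides with the paper's. For (ii) your Picone/D\'iaz--Sa\'a strategy is the right one and is exactly what the paper uses, but two steps are not yet closed.

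First, the claim that the pinching ``keeps both denominators uniformly bounded away from zero'' is false: since $u,v\in\mathrm{W}^{1,r}_0(\Omega)$ and $C_1w_0\le u,v\le C_2w_0$, the functions $u,v$ (and $w_0$) vanish at $\partial\Omega$. What the pinching \emph{does} give is that the ratios $u/v$ and $v/u$ lie in $[C_1/C_2,\,C_2/C_1]$, which is enough to make $\varphi_1,\varphi_2\in\mathrm{L}^\infty$ and to bound $|\nabla\varphi_i|$ by $C(|\nabla u|+|\nabla v|)$; but the chain rule for $v^r/u^{r-1}$ with $u$ vanishing on $\partial\Omega$ needs justification. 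The paper sidesteps this by $\varepsilon$-regularisation: it works with $u_\varepsilon=u+\varepsilon$, $v_\varepsilon=v+\varepsilon$ and the test functions $\phi=(u_\varepsilon^r-v_\varepsilon^r)/u_\varepsilon^{r-1}$, $\psi=(v_\varepsilon^r-u_\varepsilon^r)/v_\varepsilon^{r-1}$ (restricted to $\Omega_+$), which are rigorously in $\mathrm{W}^{1,r}_0(\Omega)$ since $u_\varepsilon,v_\varepsilon\ge\varepsilon$; then it sends $\varepsilon\to 0$ using the integrability condition \eqref{condinteg} to control the right-hand side via dominated convergence and Fatou on the left.

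Second, and more substantively, your final line ``the strict part of Picone's identity yields $(u^r-v^r)^+\equiv 0$'' is too fast. The equality case of Picone only gives $u\nabla v=v\nabla u$ a.e.\ on $\Omega_+=\{u>v\}$, i.e.\ $u=kv$ on each connected component $\omega$ of $\Omega_+$, with some $k=k(\omega)>1$. This does \emph{not} by itself force $k\le 1$; one needs a further argument. The paper closes this as follows: on such $\omega$, using $u=kv$ together with the sub/super inequalities and the integrability \eqref{condinteg}, one gets
\[
k^{r}\int_\omega K v^{\delta+1}\,dx \;\le\; k^{r}\int_\omega |\nabla v|^{r}\,dx \;=\; \int_\omega |\nabla u|^{r}\,dx \;\le\; \int_\omega K u^{\delta+1}\,dx \;=\; k^{\delta+1}\int_\omega K v^{\delta+1}\,dx,
\]
whence $k^{r}\le k^{\delta+1}$; since $\delta<r-1$ this forces $k\le 1$, contradicting $k>1$. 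You should either reproduce this step or note that the vanishing of the right-hand side forces $K=0$ a.e.\ on $\Omega_+$ and then argue separately, but in any event the conclusion does not drop out of Picone alone.
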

To prove this theorem, we use the well-known  inequality { in Lemma \ref{Peral}}  and the  D\'iaz-Saa inequality (see {\sc D\'iaz-Saa}\cite{Diaz}).
\begin{lemma}\label{Peral}There exists a constant $C_r>0$ such that, for all $x,y\in\RR^N$,
\begin{small} $$\vert x\vert ^r-\vert y\vert^r-r\vert x\vert^{r-2}x.(y-x)\geq\left\lbrace\begin{array}{l}\displaystyle \quad C_r\vert x-y\vert^{r}\quad if\quad r\geq 2,\\\displaystyle
C_r\frac{\vert x-y\vert^2}{(\vert x\vert+\vert y\vert)^{2-r}}\quad if\quad 1<r<2.
\end{array}\right.$$\end{small}
\end{lemma}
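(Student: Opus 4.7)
The plan is to view the left-hand side as a Taylor remainder for the strictly convex function $\Phi(z) \eqdef |z|^r$ and then bound this remainder from below by estimating the Hessian of $\Phi$ along the segment from $x$ to $y$. Since $\Phi \in \mathscr{C}^2(\RR^N \setminus \{0\})$ with $\nabla\Phi(z) = r|z|^{r-2}z$ and
$$D^2\Phi(z)[h,h] = r|z|^{r-2}|h|^2 + r(r-2)|z|^{r-4}(z\cdot h)^2,$$
the integral Taylor formula (applied to $t\mapsto \Phi(\gamma(t))$ on $[0,1]$, using a short limiting argument to deal with the at most one time at which the segment passes through the origin) yields
$$\Phi(y) - \Phi(x) - \nabla\Phi(x)\cdot(y-x) = \int_0^1 (1-t)\, D^2\Phi(\gamma(t))[y-x, y-x]\, dt,$$
where $\gamma(t) \eqdef x + t(y-x)$. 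Everything else in the proof is a separate bound on this integral in the two regimes for $r$.

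For the subquadratic regime $1 < r < 2$, the two eigenvalues of $D^2\Phi(z)$ are $r|z|^{r-2}$ (multiplicity $N-1$, transverse to $z$) and $r(r-1)|z|^{r-2}$ (in the direction of $z$); the smaller is $r(r-1)|z|^{r-2}$ since $r-1<1$. Hence $D^2\Phi(\gamma(t))[h,h] \geq r(r-1)|\gamma(t)|^{r-2}|h|^2$ with $h = y-x$. Because $r-2<0$ and $|\gamma(t)| \leq (1-t)|x| + t|y| \leq |x|+|y|$, the map $s\mapsto s^{r-2}$ is decreasing and $|\gamma(t)|^{r-2} \geq (|x|+|y|)^{r-2}$. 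Plugging this pointwise bound into the integral and using $\int_0^1 (1-t)\,dt = \tfrac{1}{2}$ produces the desired estimate with $C_r = r(r-1)/2$.

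For the superquadratic regime $r\geq 2$, both terms in $D^2\Phi(z)[h,h]$ are non-negative and so $D^2\Phi(\gamma(t))[h,h] \geq r|\gamma(t)|^{r-2}|h|^2$, but now $r-2\geq 0$ so the integrand can degenerate when $\gamma(t)$ is close to $0$, and a purely pointwise bound is insufficient. I would instead close the estimate by a homogeneity and compactness argument applied to the quotient
$$Q(x,y) \eqdef \frac{\Phi(y) - \Phi(x) - \nabla\Phi(x)\cdot(y-x)}{|y-x|^r},$$
which is continuous and homogeneous of degree $0$ on $\{(x,y)\in(\RR^N)^2 : x\neq y\}$. Restricting to the compact set $\{|x|^2 + |y|^2 = 1\}$, I would argue: off the diagonal, strict convexity of $\Phi$ makes $Q$ positive and continuous, and $Q$ extends continuously to a positive function on the complement of the diagonal; at a limit point $(z^*, z^*)$ with $|z^*|^2 = 1/2$, the second-order expansion of $\Phi$ around $z^*$ gives $Q \sim \tfrac{1}{2} D^2\Phi(z^*)[\eta,\eta]/|\eta|^r \geq c\,|\eta|^{2-r}$ with $\eta \to 0$, which stays bounded below (and blows up when $r>2$). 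Both behaviours together deliver a uniform positive constant $C_r$.

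The hard part is the superquadratic case, because the naive bound on the Hessian along $\gamma$ is defeated when $\gamma(t)$ passes near $0$; the compactness step, and especially its careful handling of the diagonal together with the possibility that $x=0$ or $y=0$, is what really drives the proof. The subquadratic case, by contrast, is entirely explicit because $s\mapsto s^{r-2}$ then goes the right way. An alternative route worth mentioning is the classical Lindqvist slicing reduction to a scalar inequality on $\RR$, which replaces the compactness step by an elementary but slightly delicate one-variable estimate.
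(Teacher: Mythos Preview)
Your argument is correct, but note that the paper does not actually prove this lemma: it simply cites Lemma~4.2 in \textsc{Lindqvist}~\cite{Lin}. So there is no ``paper's own proof'' to compare against beyond that reference.

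That said, your route is genuinely different from Lindqvist's. Lindqvist reduces to a one-dimensional inequality by writing $y = x + t\eta$ along a line and estimating the scalar function $t\mapsto |x+t\eta|^r$ directly; the $r\geq 2$ case is then handled by an elementary (if slightly fiddly) real-variable bound rather than by compactness. Your approach---integral Taylor remainder plus a Hessian eigenvalue bound for $1<r<2$ (yielding the explicit constant $C_r = r(r-1)/2$), and a homogeneity/compactness argument on the unit sphere for $r\geq 2$---is cleaner conceptually and makes the strict convexity of $|\cdot|^r$ do all the work, at the cost of a non-explicit constant in the superquadratic case. Your handling of the diagonal limit is the right one: on $\{|x|^2+|y|^2=1\}$ the diagonal sits away from the origin, so the second-order expansion gives $Q\gtrsim |\eta|^{2-r}$, which is bounded below (and blows up for $r>2$). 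The boundary cases $x=0$ or $y=0$ are harmless since $\nabla\Phi$ is continuous for $r\geq 2$ and one computes $Q(0,y)=1$, $Q(x,0)=r-1$ directly. Both approaches are standard; yours is perhaps more transparent for a reader who has not seen the slicing trick.
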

\begin{proof} See  Lemma 4.2 in \textsc{Lindqvist} \cite{Lin}.\end{proof}
\begin{proof}\textbf{{\sc (of Theorem \ref{wmp})}}
\begin{enumerate}
\item If $\delta<0$, we wish to prove that the function $w=(u-v)^+$ satisfies $w=0$ a.e. in $\Omega$. First notice that $0\leq w\in {\rm W}^{1,r}_0(\Omega)$. Applying the duality between ${\rm W}^{1,r}_0(\Omega)$ and ${\rm W}^{-1,r'}(\Omega)$, respectively, to $w$ and the the difference 
$$-\Delta_ru+\Delta_r v\leq K(x)\left(u^{\delta}-v^{\delta}\right)$$
which is $\leq0$ on the set $\Omega_+\eqdef\lbrace x\in\Omega,\quad w(x)>0\rbrace$, we obtain
\begin{multline*}
\displaystyle\int_{\Omega_+}\left(\vert\nabla u\vert^{r-2}\nabla u-\vert \nabla v\vert^{r-2}\nabla v\right).(\nabla u-\nabla v)\,dx\\
\displaystyle=\int_{\Omega}\left(\vert\nabla u\vert^{r-2}\nabla u-\vert \nabla v\vert^{r-2}\nabla v\right).\nabla w\,dx\leq 0.
\end{multline*}
This forces $\nabla w=0$ a.e. in $\Omega_+$ and, consequently, also in $\Omega$. Since $w\in{\rm W}^{1,r}_0(\Omega)$, we conclude that $w=0$ a.e. in $\Omega$ as claimed, that is, $u\leq v$ a.e. in $\Omega$.
\item If $0<\delta <r-1$, 
following some ideas in {\sc Lindquist}\cite{Lin} (see also {\sc Dr\'abek-Hern\'andez}\cite{dra-her}), we use the D\'iaz-Saa inequality.\\
More precisely, for $\varepsilon >0$, we set $u_{\varepsilon}\eqdef u+\varepsilon$ and $v_{\varepsilon}\eqdef v +\varepsilon$ in $\Omega$ and we define
\be \phi\eqdef\frac{{u_{\varepsilon}}^r-{v_{\varepsilon}}^r}{{u_{\varepsilon}}^{r-1}}\quad\text{ and }\quad\psi\eqdef\frac{{v_{\varepsilon}}^r-{u_{\varepsilon}}^r}{{v_{\varepsilon}}^{r-1}}\quad\text{ in }\Omega.\nonumber\ee
Then, $\frac{u_{\varepsilon}}{v_{\varepsilon}},\frac{v_{\varepsilon}}{u_{\varepsilon}}\in{\rm L}^{\infty}(\Omega)$ and $\phi,\psi\in{\rm W}^{1,r}_0(\Omega)$ with
\be\label{gradphi} \nabla\phi=\left[ 1+(r-1)\left(\frac{v_{\varepsilon}}{u_{\varepsilon}}\right)^r\right]\nabla u-r\left(\frac{v_{\varepsilon}}{u_{\varepsilon}}\right)^{r-1}\nabla v\quad\text{ in }\Omega,\ee
\be\label{gradpsi} \nabla\psi=\left[ 1+(r-1)\left(\frac{u_{\varepsilon}}{v_{\varepsilon}}\right)^r\right]\nabla v-r\left(\frac{u_{\varepsilon}}{v_{\varepsilon}}\right)^{r-1}\nabla u\quad\text{ in }\Omega.\ee
Setting $\Omega_+\eqdef\lbrace x\in \Omega,\quad u(x)>v(x)\rbrace$, we have that $\phi>0$ and $\psi<0$ in $\Omega_+$ and 
\be\label{phiomega+} \int_{\Omega_+}\vert\nabla u\vert ^{r-2}\nabla u.\nabla \phi\,dx\leq \int_{\Omega_+}K(x)u^{\delta}\phi\, dx<+\infty,\nonumber\ee
\be\label{psiomega+} \int_{\Omega_+}\vert\nabla v\vert ^{r-2}\nabla v.\nabla \psi\,dx\leq \int_{\Omega_+}K(x)v^{\delta}\psi\, dx<+\infty.\nonumber\ee
Using equalities \eqref{gradphi} and \eqref{gradpsi} and the fact that \be \vert \nabla\ln u_{\varepsilon}\vert=\frac{\vert \nabla u\vert}{u_{\varepsilon}}\quad\text{ and }\quad\vert \nabla\ln v_{\varepsilon}\vert=\frac{\vert \nabla v\vert}{v_{\varepsilon}}\quad\text{ in }\Omega,\ee we get
\be \begin{array}{l}
\displaystyle\int_{\Omega_+}\vert\nabla u\vert ^{r-2}\nabla u.\nabla \phi\,dx+\int_{\Omega_+}\vert\nabla v\vert ^{r-2}\nabla v.\nabla \psi\,dx\qquad\qquad~\\
\displaystyle~\qquad\qquad=\int_{\Omega_+}({u_{\varepsilon}}^r-{v_{\varepsilon}}^r)(\vert \nabla\ln u_{\varepsilon}\vert^r-\vert \nabla\ln v_{\varepsilon}\vert^r)\,dx\\
\displaystyle~\qquad\qquad-\int_{\Omega_+} r{v_{\varepsilon}}^r\vert \nabla \ln u_{\varepsilon}\vert^{r-2}(\nabla\ln u_{\varepsilon}).\left(\nabla\ln v_{\varepsilon}-\nabla\ln u_{\varepsilon}\right)\,dx\\
\displaystyle~\qquad\qquad-\int_{\Omega_+} r{u_{\varepsilon}}^r\vert \nabla \ln v_{\varepsilon}\vert^{r-2}(\nabla\ln v_{\varepsilon}).\left(\nabla\ln u_{\varepsilon}-\nabla\ln v_{\varepsilon}\right)\,dx.\\\end{array}
\nonumber\ee
\begin{enumerate}
\item
If $r\geq2$, from Lemma \ref{Peral}, it follows that 
\be \begin{array}{l}
\displaystyle\int_{\Omega_+}\vert\nabla u\vert ^{r-2}\nabla u.\nabla \phi\,dx+\int_{\Omega_+}\vert\nabla v\vert ^{r-2}\nabla v.\nabla \psi\,dx~\qquad\qquad~\\
\displaystyle~\quad\;\geq\int_{\Omega_+}({u_{\varepsilon}}^r-{v_{\varepsilon}}^r)(\vert \nabla\ln u_{\varepsilon}\vert^r-\vert \nabla\ln v_{\varepsilon}\vert^r)\,dx\\
\displaystyle~\quad\;+\int_{\Omega_+} {v_{\varepsilon}}^r\left(\vert \nabla \ln u_{\varepsilon}\vert^{r}-\vert \nabla \ln v_{\varepsilon}\vert^{r}+C_r\vert \nabla \ln u_{\varepsilon}-\nabla \ln v_{\varepsilon}\vert^{r}\right)\,dx\\
\displaystyle~\quad\;+\int_{\Omega_+} {u_{\varepsilon}}^r\left(\vert \nabla \ln v_{\varepsilon}\vert^{r}-\vert \nabla \ln u_{\varepsilon}\vert^{r}+C_r\vert \nabla \ln u_{\varepsilon}-\nabla \ln v_{\varepsilon}\vert^{r}\right)\,dx\\
\displaystyle~\quad\;=C_r\int_{\Omega_+}\vert u_{\varepsilon}\nabla v_{\varepsilon}-v_{\varepsilon}\nabla u_{\varepsilon}\vert^r\left(\frac{1}{{u_{\varepsilon}}^r}+\frac{1}{v_{\varepsilon}^r}\right)\,dx.
\end{array}
\nonumber\ee
\item If $1<r<2$, Lemma \ref{Peral} { entails}
\begin{multline*}
\displaystyle\int_{\Omega_+}\vert\nabla u\vert ^{r-2}\nabla u.\nabla \phi\,dx+\int_{\Omega_+}\vert\nabla v\vert ^{r-2}\nabla v.\nabla \psi\,dx
\\
\geq
C_r\int_{\Omega_+}\frac{\vert u_{\varepsilon}\nabla v_{\varepsilon}-v_{\varepsilon}\nabla u_{\varepsilon}\vert^2}{(u_{\varepsilon}\vert\nabla v_{\varepsilon}\vert+v_{\varepsilon}\vert\nabla u_{\varepsilon}\vert)^{2-r}}\left(\frac{1}{{u_{\varepsilon}}^r}+\frac{1}{{v_{\varepsilon}}^r}\right)\,dx.
\end{multline*}
\end{enumerate}
In the right-hand side, we get
\begin{multline*}
\displaystyle\int_{\Omega_+}K(x)\left(u^{\delta}\phi+v^{\delta}\psi\right)\,dx=\\
\int_{\Omega_+}K(x)\left[\frac{u^{\delta}}{u^{r-1}}\left(\frac{u}{u_{\varepsilon}}\right)^{r-1}-\frac{v^{\delta}}{v^{r-1}}\left(\frac{v}{v_{\varepsilon}}\right)^{r-1}\right]
\left({u_{\varepsilon}}^r-{v_{\varepsilon}}^r\right)\,dx.
\end{multline*}
Then, since $\frac{u}{u_{\varepsilon}}\to1$, $\frac{v}{v_{\varepsilon}}\to 1$ as $\varepsilon\to 0^+$ a.e. in $\Omega$, we get from \eqref{condinteg} and Lebesgue's Theorem that
\be \lim\limits_{\varepsilon\to 0^+}\int_{\Omega_+}K(x)\left(u^{\delta}\phi+v^{\delta}\psi\right)\,dx\leq 0.\nonumber\ee
By Fatou's Lemma and using the above estimates, we obtain in the both cases that $\vert u\nabla v-v\nabla u\vert=0$ a.e. in $\Omega_+$, from which we get that on each connected component set $\omega$ of $\Omega_+$, there exists $k>0$ such that $u=kv$ a.e. in $\omega$.
From sub an supersolution inequalities \eqref{leq} we have,
\begin{multline}
\displaystyle k^{r}\int_{\omega}K(x) v^{\delta+1}\,dx\leq\displaystyle k^r\int_{\omega}\vert \nabla v\vert^r\,dx=\displaystyle\int_{\omega} \vert \nabla u\vert^r\,dx \\
\leq \displaystyle\int_{\omega}K(x) u^{\delta+1}\,dx= k^{\delta+1}\int_{\omega} K(x) v^{\delta+1}\,dx.
\end{multline}
Hence, $k\leq 1$ which implies that $u\leq v$ a.e. in $\Omega_+$ and from the definition of $ \Omega_+$, $u\leq v$ a.e. in $\Omega.$
\end{enumerate}
\end{proof}

\subsubsection{Main results}
\begin{theorem}\label{theo}
Assume that the exponents $a_1<p-1$, $a_2<q-1$ and $b_1,b_2\neq0$ in problem $\boldsymbol{({\rm P})}$ satisfy the hypothesis \eqref{H1}.
\begin{enumerate}
\item Set
\begin{footnotesize}
\be\label{alpha}  \alpha_1=\frac{q-1-a_2}{(p-1-a_1)(q-1-a_2)-b_1b_2},\quad \alpha_2=\frac{p-1-a_1}{(p-1-a_1)(q-1-a_2)-b_1b_2},\nonumber\ee
\end{footnotesize}
\begin{footnotesize}
\be\label{beta} \beta_1=\frac{b_1}{(p-1-a_1)(q-1-a_2)-b_1b_2},\quad\beta_2=\frac{b_2}{(p-1-a_1)(q-1-a_2)-b_1b_2},\nonumber\ee
\end{footnotesize}
\\
\begin{footnotesize}
\be\label{gamma1} \gamma_1=\frac{(p-k_1)(q-1-a_2)+(q-k_2)b_1}{(p-1-a_1)(q-1-a_2)-b_1b_2},\quad\gamma_2=\frac{(q-k_2)(p-1-a_1)+(p-k_1)b_2}{(p-1-a_1)(q-1-a_2)-b_1b_2}\nonumber\ee
\end{footnotesize}
\\
and assume that 
\be\label{gamma} {1-\frac{1}{p}<\gamma_1 <1}\quad\text{ and }\quad{1-\frac{1}{q}<\gamma_2 <1}.  \ee
Then, problem $\boldsymbol{({\rm P})}$ possesses  positive solutions $(u,v)\in {\rm W}^{1,p}_0(\Omega)\times{\rm W}^{1,q}_0(\Omega)$ that satisfy the following estimates:
\be\label{est1u} u(x) \,\sim\,d(x)^{\gamma_1}L_1(d(x))^{\alpha_1}L_2(d(x))^{\beta_1}\quad\text{ in }\Omega,\ee
\be\label{est1v} v(x)\,\sim\,d(x)^{\gamma_2}L_2(d(x))^{\alpha_2}L_1(d(x))^{\beta_2}\quad\text{ in }\Omega.\ee 

In addition, we have $(u,v)\in\mathscr{C}^{0,\alpha}\left(\overline{\Omega}\right)\times\mathscr{C}^{0,\alpha}\left(\overline{\Omega}\right)$, for some $0<\alpha<1$.
\\
\item Now assume that
\be\label{gamma2} k_1-1<a_1+b_1<p-1\quad\text{ and }\quad k_2-1< a_2+b_2<q-1.\ee
Then, problem $\mathrm{\boldsymbol{(P)}}$ possesses  positive  solutions $(u,v)\in {\rm W}^{1,p}_0(\Omega)\times{\rm W}^{1,q}_0(\Omega)$ that satisfy the following estimates:
\be\label{est2}  u(x)\,\sim\,d(x) \quad\text{ and } \quad  v(x)\,\sim\,d(x)\quad\text{ in }\Omega.\ee
In addition, we have $(u,v)\in\mathscr{C}^{1,\alpha}\left(\overline{\Omega}\right)\times\mathscr{C}^{1,\alpha}\left(\overline{\Omega}\right)$, for some $0<\alpha<1$.
\\
\item Set
\be\gamma=\frac{p-k_1+b_1}{p-1-a_1}\nonumber\ee
and assume that
\be \label{gamma3}1-\frac{1}{p}<\gamma<1\quad\text{ and }\quad k_2-1<a_2+b_2\gamma<q-1.\ee
Then, problem $\mathrm{\boldsymbol{(P)}}$ possesses positive solutions $(u,v)\in {\rm W}^{1,p}_0(\Omega)\times{\rm W}^{1,q}_0(\Omega)$ that satisfy the following estimates:
\be \label{est3} u(x)\,\sim\,d(x)^{\gamma}L_1(d(x))^{\frac{1}{p-1-a_1}} \quad\text{ and } \quad  v(x)\,\sim\,d(x)\quad\text{ in }\Omega.\ee
In addition, we have $(u,v)\in\mathscr{C}^{0,\alpha}\left(\overline{\Omega}\right)\times\mathscr{C}^{1,\alpha}\left(\overline{\Omega}\right)$, for some $0<\alpha<1$.
\\
\item Symmetrically to part (iii) above, set
\be \gamma=\frac{q-k_2+b_2}{q-1-a_2}\nonumber\ee
and assume that
\be\label{gamma4} k_1-1<a_1+b_1\gamma<p-1\quad\text{ and }\quad 1-\frac{1}{q}<\gamma<1.\ee
Then, problem $\mathrm{\boldsymbol{(P)}}$ possesses  positive  solutions $(u,v)\in {\rm W}^{1,p}_0(\Omega)\times{\rm W}^{1,q}_0(\Omega)$ that satisfy the following estimates:
\be \label{est4} u(x)\,\sim\,d(x) \quad\text{ and } \quad  v(x)\,\sim\,d(x)^{\gamma}L_2(d(x))^{\frac{1}{q-1-a_2}}\quad\text{ in }\Omega.\ee
In addition, we have $(u,v)\in\mathscr{C}^{1,\alpha}\left(\overline{\Omega}\right)\times\mathscr{C}^{0,\alpha}\left(\overline{\Omega}\right)$, for some $0<\alpha<1$.
\end{enumerate}
\end{theorem}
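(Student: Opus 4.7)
The plan is to apply Theorem \ref{TH1} after constructing explicit sub- and supersolution pairs whose components exhibit the prescribed boundary behaviour. All four statements follow the same template; I sketch case (i) and indicate the modifications at the end.

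\textbf{Construction of the conical shell.} Guided by the scalar result Theorem \ref{gms}, I look for $\underline{u} = c_1 \xi_1$, $\overline{u} = C_1 \xi_1$, $\underline{v} = c_2 \xi_2$, $\overline{v} = C_2 \xi_2$, where
\[
\xi_1(x) = d(x)^{\gamma_1} L_1(d(x))^{\alpha_1} L_2(d(x))^{\beta_1}, \quad
\xi_2(x) = d(x)^{\gamma_2} L_2(d(x))^{\alpha_2} L_1(d(x))^{\beta_2}.
\]
The exponents are determined by balancing leading orders: the asymptotics $-\Delta_r(d^{\gamma}) \sim (1-\gamma)\gamma^{r-1}(r-1)\, d^{\gamma(r-1)-r}$ valid for $0<\gamma<1$ (with logarithmic corrections coming from $L_i$ being subleading by Remark \ref{PropL}) turn the pointwise balances $-\Delta_p \xi_1 \sim K_1 \xi_1^{a_1} \xi_2^{b_1}$ and $-\Delta_q \xi_2 \sim K_2 \xi_2^{a_2} \xi_1^{b_2}$ into three uncoupled $2 \times 2$ linear systems (for $(\gamma_1,\gamma_2)$, $(\alpha_1,\beta_2)$, $(\beta_1,\alpha_2)$) with common determinant $(p-1-a_1)(q-1-a_2) - b_1 b_2$, nonzero by \eqref{H1}; Cramer's rule reproduces the stated closed-form expressions. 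The subhomogeneity \eqref{H1}, rewritten as \eqref{condexp1-0}, then lets me pick $c_i$ small enough and $C_i$ large enough so that the sub- and supersolution inequalities hold pointwise in both the cooperative ($b_1, b_2 > 0$) and competitive ($b_1, b_2 < 0$) regimes.

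\textbf{Applying Theorem \ref{TH1}.} On the resulting shell $\mathcal{C}$, for any $(u,v) \in \mathcal{C}$ one has $u \sim \xi_1$, $v \sim \xi_2$, whence $|f_1(x,u,v)| \leq k_1 d(x)^{\delta_1}$ for any $\delta_1 < (p-1)\gamma_1 - p$, using \eqref{ln} to absorb the slowly varying factors; similarly $|\partial_u f_1(x,u,v)| \leq \kappa_1 d(x)^{\delta_1 - \alpha_1'}$ with $\alpha_1'$ arbitrarily close to $\gamma_1$, and \eqref{ineqsursol} holds with any envelope exponent $b_1^{\mathrm{env}} < \gamma_1$. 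Substituting these values into condition \eqref{delta1}, the term $(\alpha_1' - b_1^{\mathrm{env}})^+$ is arbitrarily small and the inequality reduces, after simplification, to $(p-1)\gamma_1 - p > -2 + 1/p$, equivalently $\gamma_1 > 1 - 1/p$, which is exactly the first requirement of \eqref{gamma}. The analogous computation for $f_2$ produces $\gamma_2 > 1 - 1/q$. Theorem \ref{TH1} therefore yields a positive weak solution pair $(u,v) \in \mathcal{C}$; the sandwich $\underline{u} \leq u \leq \overline{u}$, $\underline{v} \leq v \leq \overline{v}$ gives \eqref{est1u}--\eqref{est1v}, and the Hölder regularity follows from Theorem \ref{regu}.

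\textbf{Remaining cases and main difficulty.} Cases (ii)--(iv) follow the same scheme with adapted profiles: in (ii) both $\xi_i = d$ (no slowly varying factor), so the \textit{(i)} branch of Theorem \ref{gms} applies and yields $\mathscr{C}^{1,\alpha}$ regularity for both components; in (iii) one takes $\xi_1 = d^{\gamma} L_1^{1/(p-1-a_1)}$ and $\xi_2 = d$, the coupling system collapsing to a single balance equation solvable precisely under \eqref{gamma3}, and case (iv) is symmetric. The principal technical obstacle throughout is the bookkeeping of logarithmic corrections: one must verify that the terms produced by differentiating $L_i(d)$, controlled by $tL_i'(t)/L_i(t) \to 0$ from Remark \ref{PropL}, are strictly subleading with respect to the polynomial contributions, and can be dominated by adjusting the multiplicative constants $c_i, C_i$. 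The remainder coming from $\Delta d$ is bounded since $\partial \Omega$ is $\mathscr{C}^2$ and is likewise of lower order near the boundary.
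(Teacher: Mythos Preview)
Your overall strategy---manufacture sub- and supersolution pairs with the prescribed boundary profiles, verify the hypotheses of Theorem~\ref{TH1}, and read off the estimates from the conical shell---matches the paper's, and your computation that condition~\eqref{delta1} collapses to $\gamma_1>1-1/p$, $\gamma_2>1-1/q$ is correct. The \emph{construction} of the profiles, however, differs from the paper's, and the difference is not cosmetic.

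The paper does not take $\underline{u},\overline{u}$ as explicit multiples of $\xi_1(x)=d(x)^{\gamma_1}L_1(d(x))^{\alpha_1}L_2(d(x))^{\beta_1}$. Instead it sets $\underline{u}=m\psi_1$, $\overline{u}=m^{-1}\psi_1$, $\underline{v}=m^{\sigma}\psi_2$, $\overline{v}=m^{-\sigma}\psi_2$, where $\psi_1\in{\rm W}^{1,p}_0(\Omega)$ and $\psi_2\in{\rm W}^{1,q}_0(\Omega)$ are the \emph{actual weak solutions}, supplied by Theorem~\ref{gms}, of auxiliary scalar problems $-\Delta_p w=d^{-k_1}\mathcal{L}_1(d)\,w^{\delta_1}$ and $-\Delta_q w=d^{-k_2}\mathcal{L}_2(d)\,w^{\delta_2}$, with the perturbations $\mathcal{L}_i=L_1^{\lambda_i}L_2^{\mu_i}$ and exponents $\delta_i$ tuned so that $\psi_i\sim\xi_i$ near $\partial\Omega$. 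The payoff is that $-\Delta_p\psi_1$ is known \emph{exactly} from the scalar equation, so the sub/supersolution inequalities \eqref{soussol}--\eqref{sursol} become pure pointwise algebra once the asymptotics of $\psi_i$ are inserted; the single parameter $m$ together with the exponent $\sigma$ from \eqref{condexp1-0} then closes the inequalities uniformly in the signs of $b_1,b_2$.

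Your direct route---compute $-\Delta_p\xi_1$ by hand---has a genuine gap: the distance $d$ is only Lipschitz on $\Omega$. It is $\mathscr{C}^2$ in a collar of $\partial\Omega$, but on the ridge set in the interior $\Delta d$ is a singular measure, not a bounded function, so your assertion that ``the remainder coming from $\Delta d$ is bounded'' fails globally and $\xi_1$ is not, as written, an admissible distributional sub/supersolution on all of $\Omega$. This is repairable (replace $d$ by a $\mathscr{C}^2$ equivalent, or by $\varphi_{1,p}$ as the paper does in the proof of Theorem~\ref{theo2}), but it is exactly the technicality that the paper's indirect construction via Theorem~\ref{gms} is designed to sidestep.
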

\begin{theorem}\label{uniq}
Let $a_1<p-1$, $a_2<q-1$ and $b_1,b_2\neq 0$  satisfying the subhomogeneity hypothesis \eqref{H1}. Assume  that $\mathrm{\boldsymbol{(P)}}$ is either a competitive or a cooperative system, \textit{i.e.} $\boldsymbol{b_1b_2>0}$. Then, each solution provided by Theorem \ref{theo} is unique.
\end{theorem}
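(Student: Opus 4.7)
The plan is to reduce uniqueness of the pair $(u,v)$ to uniqueness of a scalar fixed point for a monotone, subhomogeneous map built from the two equations. First, for a locally uniformly positive function $v$ with the boundary behaviour prescribed by Theorem \ref{theo}, I would consider the scalar equation $-\Delta_p u = K_1(x) v^{b_1} u^{a_1}$, whose positive $\mathrm{W}^{1,p}_0(\Omega)$-solution is unique by Theorem \ref{wmp} applied with $\delta = a_1 < p-1$ and weight $K_1 v^{b_1}$. The integrability condition \eqref{condinteg} required when $a_1 > 0$ is automatic because $u \in \mathrm{W}^{1,p}_0(\Omega)$ yields $\int_\Omega K_1 v^{b_1} u^{a_1+1}\,dx = \int_\Omega |\nabla u|^p\,dx < \infty$. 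Denote this unique $u$ by $\mathcal{F}_1(v)$, and construct $\mathcal{F}_2$ symmetrically from the second equation.

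Next I would exploit the power-law structure. A scaling argument, combined with the scalar uniqueness above, shows that $\mathcal{F}_1$ is exactly homogeneous of degree $b_1/(p-1-a_1)$ and $\mathcal{F}_2$ of degree $b_2/(q-1-a_2)$. Moreover, Theorem \ref{wmp} also yields that $\mathcal{F}_i$ preserves the order when $b_i > 0$ and reverses it when $b_i < 0$. Since the hypothesis forces $b_1 b_2 > 0$, the composition $\mathcal{S} := \mathcal{F}_2 \circ \mathcal{F}_1$ is order-preserving in both the cooperative and the competitive case, and it is homogeneous of degree $\rho := b_1 b_2 / [(p-1-a_1)(q-1-a_2)]$. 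The subhomogeneity hypothesis \eqref{H1} is precisely equivalent to $\rho \in (0,1)$.

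Positive solutions of $\boldsymbol{({\rm P})}$ correspond bijectively to positive fixed points of $\mathcal{S}$ via $v \mapsto (\mathcal{F}_1(v), v)$. Let $(u_1,v_1)$ and $(u_2,v_2)$ be two solutions provided by Theorem \ref{theo}. The estimates \eqref{est1u}--\eqref{est4} show that $v_1$ and $v_2$ share the same boundary behaviour, so they are comparable: there exists $C > 1$ with $C^{-1} v_2 \leq v_1 \leq C v_2$ a.e.\ in $\Omega$. I would then set $C_* := \inf\{\,C > 0 : v_1 \leq C v_2 \text{ a.e.\ in } \Omega\,\} \in (0,\infty)$. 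If $C_* > 1$, the monotonicity and homogeneity of $\mathcal{S}$ give
\[ v_1 = \mathcal{S}(v_1) \leq \mathcal{S}(C_* v_2) = C_*^{\rho}\, \mathcal{S}(v_2) = C_*^{\rho} v_2, \]
with $C_*^{\rho} < C_*$, contradicting the definition of $C_*$. Hence $C_* \leq 1$, so $v_1 \leq v_2$ a.e.; the symmetric argument yields $v_2 \leq v_1$, thus $v_1 = v_2$, and then $u_1 = \mathcal{F}_1(v_1) = \mathcal{F}_1(v_2) = u_2$.

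The main delicate point will be the scalar uniqueness in the first step: Theorem \ref{wmp} has to be invoked with the merely $\mathrm{L}^1_{\mathrm{loc}}$ weight $K_1 v^{b_1}$ and a possibly negative exponent $a_1 < p-1$. The integrability condition \eqref{condinteg} must be checked not only for the two original solutions but also for the rescaled function $C_* v_2$ entering the fixed-point comparison; this is exactly where the finiteness of the Dirichlet energy, an immediate consequence of the $\mathrm{W}^{1,p}_0(\Omega)$-regularity of $u$ provided by Theorem \ref{theo}, becomes crucial and where the bulk of the technical work lies.
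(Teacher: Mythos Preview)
Your argument is correct and is essentially the same as the paper's: both reduce to a Krasnoselskii-type optimal-constant argument using that the composed scalar solution maps are order-preserving (thanks to $b_1b_2>0$ and Theorem~\ref{wmp}) and homogeneous of degree $\rho=\dfrac{b_1b_2}{(p-1-a_1)(q-1-a_2)}\in(0,1)$ by \eqref{H1}. The only cosmetic difference is that the paper tracks the optimal constant simultaneously on both components $(u,v)$, whereas you work on $v$ alone and recover $u$ afterwards; your explicit verification of \eqref{condinteg} via the finiteness of the Dirichlet energy is in fact more careful than the paper's own treatment.
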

{ The cooperative case is further analysed in the following result:}
\begin{theorem}\label{cor}
Let us suppose that the exponents $a_1<p-1$, $a_2<q-1$ and $b_1,b_2\neq 0$  satisfy the subhomogeneity hypothesis \eqref{H1}.
Moreover, assume  that $\mathrm{\boldsymbol{(P)}}$ is a cooperative system, \textit{i.e.}, $\boldsymbol{b_1>0\text{ and }b_2>0}$.
\begin{enumerate}
\item Set
  {\small \be \gamma_1=\frac{(p-k_1)(q-1-a_2)+(q-k_2)b_1}{(p-1-a_1)(q-1-a_2)-b_1b_2},\ee }  {\small \be\gamma_2=\frac{(q-k_2)(p-1-a_1)+(p-k_1)b_2}{(p-1-a_1)(q-1-a_2)-b_1b_2}\ee}
and assume that one of the three following conditions are satisfied:
\be 0<\gamma_1\leq 1-\frac{1}{p}\quad\text{ and }\quad 0<\gamma_2\leq 1-\frac{1}{q},\ee
\be 1-\frac{1}{p}<\gamma_1<1\quad\text{ and }\quad 0<\gamma_2\leq 1-\frac{1}{q},\ee
\be 0<\gamma_1\leq 1-\frac{1}{p}\quad\text{ and }\quad 1-\frac{1}{q}<\gamma_2<1.\ee
Then, problem $\boldsymbol{({\rm P})}$ admits positive solutions $(u,v)\in {\rm W}^{1,p}_{\rm loc}(\Omega)\times{\rm W}^{1,q}_{\rm loc}(\Omega)$ in the sense of distributions satisfying the estimates \eqref{est1u} and \eqref{est1v}. 
\item Set
\be \gamma=\frac{p-k_1+b_1}{p-1-a_1}\ee
and assume that
\be0<\gamma\leq 1-\frac{1}{p}\quad\text{ and }\quad k_2-1<a_2+b_2\gamma<q-1.\ee
Then, problem $\boldsymbol{({\rm P})}$ nevertheless admits positive solutions $(u,v)\in {\rm W}^{1,p}_{\rm loc}(\Omega)\times{\rm W}^{1,q}_0(\Omega)$ in the sense of distributions satisfying the estimates \eqref{est1u} and \eqref{est1v}. 
\\
\item Symmetrically to part (ii) above, set
\be \gamma=\frac{q-k_2+b_2}{q-1-a_2}\ee
and assume that 
\be k_1-1<a_1+b_1\gamma<p-1\quad\text{ and }\quad 0<\gamma\leq 1-\frac{1}{q}.\ee
Then, problem $\mathrm{\boldsymbol{(P)}}$ possesses  positive solutions $(u,v)\in {\rm W}^{1,p}_0(\Omega)\times{\rm W}^{1,q}_{\rm loc}(\Omega)$ in the sense of distributions that satisfies the estimates given in \eqref{est4}.
\end{enumerate}
\end{theorem}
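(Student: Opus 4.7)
My plan is to apply Theorem \ref{TH2} in the cooperative setting, with sub and supersolutions built from the same asymptotic profiles that drive the proof of Theorem \ref{theo}. The three regimes of (i) together with (ii) and (iii) are precisely those in which the target profiles $d(x)^{\gamma_i}L_1(d)^{\ast}L_2(d)^{\ast}$ still produce continuous positive functions vanishing on $\partial\Omega$, but whose gradients fail to be globally $p$- or $q$-integrable (since $\gamma_i\le 1-1/r$ forces $(\gamma_i-1)r\le -1$, so $\int d(x)^{(\gamma_i-1)r}\,dx=+\infty$). Hence one can only expect solutions in $\mathrm{W}^{1,p}_{\mathrm{loc}}\times\mathrm{W}^{1,q}_{\mathrm{loc}}$ understood distributionally, which is exactly what Theorem \ref{TH2} delivers.

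First, I would construct two positive functions $\phi_1\in\mathscr{C}(\overline{\Omega})\cap\mathrm{W}^{1,p}_{\mathrm{loc}}(\Omega)$ and $\phi_2\in\mathscr{C}(\overline{\Omega})\cap\mathrm{W}^{1,q}_{\mathrm{loc}}(\Omega)$ with the prescribed boundary asymptotics of \eqref{est1u}--\eqref{est1v} in case (i), or of the mixed form \eqref{est3}, \eqref{est4} in cases (ii), (iii). Near $\partial\Omega$ these are the explicit Karamata-type functions $d(x)^{\gamma}L_1(d)^{\ast}L_2(d)^{\ast}$, smoothly extended to the interior. Since $d(\cdot)$ is $\mathscr{C}^2$ in a boundary tubular neighborhood, direct differentiation combined with the Karamata relations $tL'(t)/L(t)\to 0$ and $tL''(t)/L'(t)\to -1$ yields the key asymptotic
\[
-\Delta_r\bigl(d(x)^{\gamma}L(d(x))^{\alpha}\bigr)\;\sim\; d(x)^{(\gamma-1)(r-1)-1}\,L(d(x))^{\alpha(r-1)}\quad\text{as } d(x)\to 0^+,
\]
exactly as in the scalar analysis underpinning Theorem \ref{gms} and the proof of Theorem \ref{theo}.

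Next, the cooperative assumption $b_1,b_2>0$ makes $f_1$ non-decreasing in $v$ and $f_2$ non-decreasing in $u$; consequently the whole family of sub/supersolution inequalities \eqref{soussol}--\eqref{sursol} indexed by $v\in[\underline{v},\overline{v}]$ (resp.\ $u\in[\underline{u},\overline{u}]$) collapses to a single pair of scalar inequalities at $(\underline{u},\underline{v})$ and $(\overline{u},\overline{v})$. I would set $\underline{u}=\lambda\phi_1$, $\underline{v}=\lambda\phi_2$, $\overline{u}=\Lambda\phi_1$, $\overline{v}=\Lambda\phi_2$ and verify the four distributional inequalities by matching powers of $d(x)$ through the subhomogeneity hypothesis \eqref{H1}; taking $\lambda>0$ small enough and $\Lambda>0$ large enough then suffices, exactly as in the proof of Theorem \ref{theo}.

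To conclude I would check the remaining hypotheses of Theorem \ref{TH2}. Bound \eqref{sssolu} holds with $b_1=\gamma_1-\varepsilon>0$ and $b_2=\gamma_2-\varepsilon>0$ for any small $\varepsilon>0$, the slowly varying factors being absorbed via \eqref{ln}. For \eqref{partial1}, direct differentiation gives $|\partial f_1/\partial u|=|a_1|K_1(x)u^{a_1-1}v^{b_1}$; bounding $K_1$ by $Cd(x)^{-k_1-\varepsilon}$ and using either $\overline{u}$ or $\underline{u}$ (depending on the sign of $a_1-1$) together with the bounds on $v$ in $\mathcal{C}$ produces a pointwise bound of the form $\kappa_1 d(x)^{\delta_1}$ for some $\delta_1\in\mathbb{R}$, and symmetrically for $\partial f_2/\partial v$. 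Theorem \ref{TH2} then supplies a positive very weak solution $(u,v)\in[\underline{u},\overline{u}]\times[\underline{v},\overline{v}]$, which by the sandwich automatically obeys the announced pointwise estimates. The main technical obstacle is the construction step: performing the $-\Delta_r$ computation on $d(x)^{\gamma}L(d)^{\alpha}$ rigorously up to the boundary while tracking the slowly varying perturbations, and recognizing that the exponents $\gamma_i,\alpha_i,\beta_i$ are precisely calibrated by \eqref{H1} so that the subhomogeneous balance holds simultaneously in both equations.
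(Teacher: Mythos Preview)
Your proposal is essentially correct and mirrors the paper's own proof, which simply states that the argument is ``very similar to the proof of Theorem \ref{theo}'' and is omitted. You have correctly identified that Theorem \ref{TH2} replaces Theorem \ref{TH1} in the strongly singular cooperative regime, and that the same profile functions from the proof of Theorem \ref{theo} (those provided by Theorem \ref{gms}, including its part (iv) which yields $\psi\in\mathrm{W}^{1,r}_{\rm loc}(\Omega)\cap\mathscr{C}_0(\overline{\Omega})$ in the range $\delta\le k-2+\frac{k-1}{r-1}$) furnish the required sub- and supersolutions in $\mathscr{C}(\overline{\Omega})\cap\mathrm{W}^{1,r}_{\rm loc}(\Omega)$.

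One point deserves care: your choice $\underline{u}=\lambda\phi_1$, $\underline{v}=\lambda\phi_2$ with the \emph{same} constant $\lambda$ does not work in general. The subsolution inequality for $\underline{u}$ then requires $\lambda^{\,p-1-a_1-b_1}$ to be small, i.e.\ $a_1+b_1<p-1$, which is not implied by the subhomogeneity hypothesis \eqref{H1} alone (take e.g.\ $p=q=2$, $a_1=a_2=0$, $b_1=2$, $b_2=0.1$). The paper's proof of Theorem \ref{theo} avoids this by scaling asymmetrically: $\underline{u}=m\psi_1$, $\underline{v}=m^{\sigma}\psi_2$ with $\sigma>0$ chosen via \eqref{condexp1-0}, so that the relevant exponents $p-1-a_1-\sigma b_1$ and $\sigma(q-1-a_2)-b_2$ are both positive. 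Since you explicitly defer to ``exactly as in the proof of Theorem \ref{theo}'' this is easy to repair, but your stated ansatz should be adjusted accordingly. Apart from this, and the cosmetic difference that you compute $-\Delta_r(d^{\gamma}L(d)^{\alpha})$ by hand rather than invoking Theorem \ref{gms} as a black box, your route coincides with the paper's.
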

{The next result deals with some limiting cases:}
\begin{theorem}\label{theo2} 
Assume that the exponents $a_1<p-1, a_2<q-1$ and $b_1,b_2\neq0$ satisfy the subhomogeneity hypothesis \eqref{H1}.
\begin{enumerate}
\item Assume that \be a_1+b_1=k_1-1\quad\text{ and }\quad k_2-1\leq a_2+b_2< q-1.\ee 
Then, for all $\varepsilon>0$ small enough, there exist  $C_1,C_2>0$ and $C_1',C_2'>0$ such that problem $\mathrm{\boldsymbol{(P)}}$ possesses  positive  solutions $(u,v)\in{\rm W}^{1,p}_0(\Omega)\times{\rm W}^{1,q}_0(\Omega)$ that satisfy the following estimates:
\be C_1d(x)\leq u \leq C_2 d(x)^{1-\varepsilon}\text{ and } C_1'd(x) \leq v\leq C_2' d(x)^{1-\varepsilon\sigma}\text{ in }\Omega,\ee
where $\sigma>0$ is given in \eqref{condexp1-0}.In addition, we have $(u,v)\in \mathscr{C}^{0,\alpha}\left(\overline{\Omega}\right)\times\mathscr{C}^{0,\alpha}\left(\overline{\Omega}\right)$, for some $0<\alpha<1$.
\item Symmetrically, assume that 
\be a_2+b_2=k_2-1\quad\text{ and }\quad k_1-1\leq a_1+b_1<q-1.\ee 
Then, for all $\varepsilon>0$ small enough, there exist  $C_1,C_2>0$ and $C_1',C_2'>0$ such that problem $\mathrm{\boldsymbol{(P)}}$ possesses  positive  solutions $(u,v)\in{\rm W}^{1,p}_0(\Omega)\times{\rm W}^{1,q}_0(\Omega)$ that satisfy the following estimates:
\be C_1d(x)\leq u \leq C_2 d(x)^{1-\varepsilon}\text{ and } C_1'd(x) \leq v\leq C_2' d(x)^{1-\varepsilon\sigma}\text{ in }\Omega.\ee
In addition, we have $(u,v)\in \mathscr{C}^{0,\alpha}\left(\overline{\Omega}\right)\times\mathscr{C}^{0,\alpha}\left(\overline{\Omega}\right)$, for some $0<\alpha<1$.
\item Let us abbreviate 
\be \gamma=\frac{p-k_1+b_1}{p-1-a_1}\nonumber\ee
and assume that 
\be\label{garage}
1-\frac{1}{p}<\gamma<1\quad\;and\;\quad a_2+b_2\gamma=k_2-1.
\ee
Then, for all $\varepsilon>0$ small enough, there exist  $C_1,C_2>0$ and $C_1',C_2'>0$ such that problem $\mathrm{\boldsymbol{(P)}}$ possesses  positive  solutions $(u,v)\in{\rm W}^{1,p}_0(\Omega)\times{\rm W}^{1,q}_0(\Omega)$ that satisfy the following estimates in $\Omega$:
\be C_1d(x)^{\gamma+\varepsilon}\leq u \leq C_2 d(x)^{\gamma-\varepsilon}\text{ and } C_1'd(x) \leq v\leq C_2' d(x)^{1-\varepsilon\sigma}.\ee
In addition, we have $(u,v)\in \mathscr{C}^{0,\alpha}\left(\overline{\Omega}\right)\times\mathscr{C}^{0,\alpha}\left(\overline{\Omega}\right)$, for some $0<\alpha<1$.
\item Symmetrically, let us abbreviate 
\be \gamma=\frac{q-k_2+b_2}{q-1-a_2}\nonumber\ee
and assume that 
\be\label{garage2}
a_1+b_1\gamma=k_2-1\quad\;and\;\quad 1-\frac{1}{q}<\gamma<1 .
\ee
Then, for all $\varepsilon>0$ small enough, there exist  $C_1,C_2>0$ and $C_1',C_2'>0$ such that problem $\mathrm{\boldsymbol{(P)}}$ possesses  positive  solutions $(u,v)\in{\rm W}^{1,p}_0(\Omega)\times{\rm W}^{1,q}_0(\Omega)$ that satisfy the following estimates in $\Omega$:
\be C_1d(x)\leq u \leq C_2 d(x)^{1-\varepsilon}\text{ and } C_1'd(x)^{\gamma+\varepsilon\sigma} \leq v\leq C_2' d(x)^{\gamma-\varepsilon\sigma}.\ee
In addition, we have $(u,v)\in \mathscr{C}^{0,\alpha}\left(\overline{\Omega}\right)\times\mathscr{C}^{0,\alpha}\left(\overline{\Omega}\right)$, for some $0<\alpha<1$.
\end{enumerate}
\end{theorem}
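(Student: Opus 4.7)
The plan is to apply Theorem~\ref{TH1} to each of the four cases, the whole work reducing to the construction of explicit sub- and supersolution pairs whose boundary profiles are those announced in the theorem, and to the verification of \eqref{f1}--\eqref{delta1} for $f_i(x,u,v)=K_i(x)u^{a_i}v^{b_i}$ on the resulting conical shell $\mathcal C$.

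For case (i), I would take $\underline u=c_1\varphi_{1,p}$ and $\underline v=c_2\varphi_{1,q}$ with $c_1,c_2>0$ small, so that $\underline u,\underline v\sim d(x)$ by \eqref{vep}, and $\overline u=C_2\,d(x)^{1-\varepsilon}$, $\overline v=C_2'\,d(x)^{1-\sigma\varepsilon}$, where $\sigma>0$ is the parameter furnished by \eqref{condexp1-0}. Case (ii) is completely symmetric. In case (iii) I would keep $\underline v=c_2\varphi_{1,q}$, $\overline v=C_2'\,d(x)^{1-\sigma\varepsilon}$ for the (critical) second equation, while for the first equation, which sits in the regime of Theorem~\ref{gms}(iii), I would use the two-sided power bracket $\underline u=c_1 d(x)^{\gamma+\varepsilon}$, $\overline u=C_1 d(x)^{\gamma-\varepsilon}$, the perturbation $\pm\varepsilon$ serving precisely to absorb the Karamata factor $L_1^{1/(p-1-a_1)}$ through Remark~\ref{PropL}(i). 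Case (iv) is the mirror image of (iii).

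Verification of the differential inequalities is the crux of the argument. A direct computation in a tubular neighbourhood of $\partial\Omega$, using $|\nabla d|\equiv 1$ and boundedness of $\Delta d$, yields the expansion
\be
-\Delta_r d(x)^\alpha = \alpha^{r-1}(1-\alpha)(r-1)\,d(x)^{(\alpha-1)(r-1)-1}\bigl(1+O(d(x))\bigr)\nonumber
\ee
for $0<\alpha<1$. Applied to $\overline u=C_2 d^{1-\varepsilon}$ in case (i) it produces a leading singularity of order $d^{-\varepsilon(p-1)-1}$, whereas the right-hand side $K_1\overline u^{a_1}\overline v^{b_1}$ (or $\underline v^{b_1}$ in the competitive case) has order $d^{-1-\varepsilon(a_1+\sigma b_1)}L_1(d)$ thanks to the critical equality $a_1+b_1=k_1-1$; the supersolution inequality therefore reduces to the strict bound $p-1>a_1+\sigma|b_1|$, which is exactly \eqref{condexp1-0}, the slowly varying factor $L_1$ being absorbed into an arbitrarily small power of $d$ by Remark~\ref{PropL}(i). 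The second supersolution inequality is treated identically using $\sigma(q-1-a_2)>|b_2|$. The subsolution inequalities are handled via the eigenvalue equation $-\Delta_r\varphi_{1,r}=\lambda_{1,r}\varphi_{1,r}^{r-1}$: near $\partial\Omega$ the singular source $K_i\underline u^{a_i}\underline v^{b_i}\sim d(x)^{-1}L_i(d(x))$ dominates $d(x)^{r-1}$, so the inequality holds for $c_1,c_2$ small enough. Cases (ii)--(iv) follow the same template, the critical exponent relations $a_2+b_2=k_2-1$, $a_2+b_2\gamma=k_2-1$, or $a_1+b_1\gamma=k_1-1$ being balanced in each case by the single parameter $\sigma$.

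Once $\mathcal C$ is in hand, hypotheses \eqref{f1} and \eqref{df1}--\eqref{df2} reduce to purely algebraic bounds: for instance in case (i) one may take $\delta_1=-k_1+a_1(1-\varepsilon)+b_1(1-\sigma\varepsilon)$, $\alpha_1=1-\varepsilon$, with the exponent denoted $b_1$ in \eqref{ineqsursol} equal to $1-\varepsilon$, and analogously for $\delta_2,\alpha_2$. The key condition \eqref{delta1} then amounts to a finite family of strict inequalities linear in $\varepsilon$ that hold simultaneously for $\varepsilon>0$ small, because \eqref{condexp1-0} provides the required margin. Theorem~\ref{TH1} then delivers a weak solution pair $(u,v)\in\mathcal C$ satisfying the announced two-sided estimates, and the global Hölder regularity follows from Theorem~\ref{regu}. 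The main obstacle is the simultaneous balancing of the two $\varepsilon$-scales through the single parameter $\sigma$ at criticality; it is precisely the subhomogeneity condition \eqref{H1}, in its equivalent form \eqref{condexp1-0}, that makes this balancing possible.
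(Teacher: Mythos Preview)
Your overall strategy matches the paper's: build sub- and supersolution pairs with the announced boundary profiles and invoke Theorem~\ref{TH1}, then appeal to Theorem~\ref{regu} for H\"older regularity. The paper does exactly this. There are, however, two differences in your choice of building blocks, one of which leaves a genuine technical gap.

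First, for the supersolutions (and in case~(iii) for both barriers) you take raw powers $d(x)^{\alpha}$ and compute $-\Delta_r d^{\alpha}$ only in a tubular neighbourhood of $\partial\Omega$, relying on $|\nabla d|\equiv 1$ there. But the differential inequalities \eqref{soussol}--\eqref{sursol} must hold in the distributional sense on all of $\Omega$, and $d$ is merely Lipschitz away from the boundary (it fails to be $C^1$ on the cut locus), so $-\Delta_r d^{\alpha}$ is not a priori a function and your pointwise expansion does not extend. The paper avoids this by using powers of the first eigenfunction: $\overline u=m^{-1}(\varphi_{1,p})^{1-\varepsilon}$, $\overline v=m^{-\sigma}(\varphi_{1,q})^{1-\sigma\varepsilon}$ in case~(i), and $\underline u=m(\varphi_{1,p})^{\gamma+\varepsilon}$, $\overline u=m^{-1}(\varphi_{1,p})^{\gamma-\varepsilon}$ in case~(iii). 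Since $\varphi_{1,r}\in\mathscr C^{1,\alpha}(\overline\Omega)$ and $\varphi_{1,r}\sim d$ by \eqref{vep}, one obtains the global identity \eqref{estpuissvep} and the boundary asymptotics you want, without any cut-locus issue. The fix is thus straightforward: replace every $d(x)^{\alpha}$ by $(\varphi_{1,r})^{\alpha}$.

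Second, for the subsolutions in case~(i) you take $\underline u=c_1\varphi_{1,p}$, $\underline v=c_2\varphi_{1,q}$, whereas the paper uses $\underline u=m\psi_1$, $\underline v=m^{\sigma}\psi_2$ with $\psi_1,\psi_2$ the solutions of auxiliary scalar problems $-\Delta_p w=K_1(x)w^{\delta_1}$, $-\Delta_q w=K_2(x)w^{\delta_2}$ furnished by Theorem~\ref{gms} (with $\delta_1>k_1-1$, $\delta_2>a_2+b_2$). Both choices give $\underline u,\underline v\sim d$, and your eigenfunction choice does work; the paper's choice simply makes the subsolution inequality more transparent because the equation for $\psi_i$ already contains the weight $K_i$. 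Finally, note that the paper couples the four constants through a single parameter $m$ (with $m,m^{\sigma},m^{-1},m^{-\sigma}$), which is what makes the simultaneous verification of all four inequalities go through via \eqref{condexp1-0}; your separate $c_1,c_2,C_2,C_2'$ would need to be tied together in the same way.
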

\subsubsection{Proof of Theorem \ref{theo}}
{ Thanks to Theorem \ref{gms}, we apply Theorem \ref{TH1} with a suitable choice of sub and supersolutions pairs $(\underline{u},\underline{v}),(\overline{u},\overline{v})\in{\rm W}^{1,p}_0(\Omega)\times{\rm W}^{1,q}_0(\Omega)$} { in the following form:}
\be \underline{u}\equiv m \psi_{1}\quad\text{ and }\quad \overline{u}\equiv m^{-1}\psi_{1}\quad\text{ in }\Omega,\nonumber\ee
\be \underline{v}\equiv m^{\sigma} \psi_{2}\quad\text{ and }\quad \overline{v}\equiv m^{-\sigma}\psi_{2}\quad\text{ in }\Omega,\nonumber\ee
where $\sigma>0$ is given in \eqref{condexp1-0}, $0<m<1$ is an appropriate constant small enough and  $\psi_{1}\in{\rm W}^{1,p}_0(\Omega)$, $\psi_{2}\in{\rm W}^{1,q}_0(\Omega)$ are given by Theorem \ref{gms} as the respective unique solutions of problems
\be\label{sssoluex1} -\Delta_p w=d(x)^{-k_1}\mathcal{L}_1(d(x))w^{\delta_1}\quad\text{ in }\Omega;\quad w\vert_{\partial\Omega}=0,\quad w>0\quad \text{ in }\Omega,\ee
\be\label{sssolvex1} -\Delta_q w=d(x)^{-k_2}\mathcal{L}_2(d(x))w^{\delta_2}\quad\text{ in }\Omega;\quad w\vert_{\partial\Omega}=0,\quad w>0\quad \text{ in }\Omega,\ee
satisfying some cone conditions we specify below. In the following {alternatives}, we choose suitable perturbations $\mathcal{L}_1,\mathcal{L}_2$ as in \eqref{L} and  suitable values of exponents $k_1-2+\frac{k_1-1}{p-1}<\delta_1<p-1$ and $k_2-2+\frac{k_2-1}{q-1}<\delta_2<q-1$ in order to satisfy 
\be\label{estsssolex1}  -\Delta_p\psi_1\sim K_1(x){\psi_1}^{a_1}{\psi_2}^{b_1}\quad\text{ and }\quad-\Delta_q{\psi_2}\sim K_2(x){\psi_2}^{a_2}{\psi_1}^{b_2}\quad\text{ in }\Omega,\ee
which provide us the inequalities \eqref{soussol} to \eqref{sursol} {in order} to apply Theorem \ref{TH1}.

\textbf{\textit{Alternative 1:}} We look for positive solutions  $(u,v)\in{\rm W}^{1,p}_0(\Omega)\times{\rm W}^{1,q}_0(\Omega)$ to $\boldsymbol{({\rm P})}$ by making the "Ansatz" that 
\be u(x)\sim d(x)^{\gamma_1}L_1(d(x))^{\alpha_1}L_2(d(x))^{\beta_1}\quad\text{ in }\Omega,\nonumber\ee
\be v(x)\sim d(x)^{\gamma_2}L_2(d(x))^{\alpha_2}L_1(d(x))^{\beta_2}\quad\text{ in }\Omega,\nonumber\ee
for some $\gamma_1\in(1-\frac{1}{p},1)$, $\gamma_2\in(1-\frac{1}{q},1)$ and $\alpha_1,\alpha_2,\beta_1,\beta_2\in\RR$.
For that, we take in \eqref{sssoluex1} and \eqref{sssolvex1}
\be\label{conddelta} k_1-2+\frac{k_1-1}{p-1}<\delta_1< k_1-1\quad\text{ and }\quad\text k_2-2+\frac{k_2-1}{q-1}<\delta_2< k_2-1,\ee 
\be \mathcal{L}_1={L_1}^{\lambda_1}.{L_2}^{\mu_1}\quad \text{ and }\quad\mathcal{L}_2={L_2}^{\lambda_2}.{L_1}^{\mu_2}\quad\text{ in }\Omega,\nonumber\ee
where $\lambda_1,\,\lambda_2,\,\mu_1,\,\mu_2\in \RR$ are suitable exponents we {fix} later.
By Theorem \ref{gms}, $\psi_1\in{\rm W}^{1,p}_0(\Omega)$, $\psi_2\in{\rm W}^{1,q}_0(\Omega)$ and satisfy
\be\label{psi1ex1}\psi_{1}(x)\,\sim \,d(x)^{\frac{p-k_1}{p-1-\delta_1}}L_1(d(x))^{\frac{\lambda_1}{p-1-\delta_1}}L_2(d(x))^{\frac{\mu_1}{p-1-\delta_1}}\quad\text{ in }\Omega,
\ee
\be\label{psi2ex1}\psi_{2}(x)\,\sim \,d(x)^{\frac{q-k_2}{q-1-\delta_2}}L_2(d(x))^{\frac{\lambda_2}{q-1-\delta_2}}L_1(d(x))^{\frac{\mu_2}{q-1-\delta_2}}\quad\text{ in }\Omega.
\ee
In view of satisfying estimates given in \eqref{estsssolex1}, the comparison of the term $-\Delta_p\psi_{1}$ with $K_1(x){\psi_{1}}^{a_1}{\psi_{2}}^{b_1}$ on one side, and the term $-\Delta_q\psi_2$ with $K_2(x){\psi_2}^{a_2}{\psi_1}^{b_2}$ on the other side, imposes the exponents $\lambda_1,\lambda_2,\mu_1,\mu_2$ and $\delta_1,\delta_2$ to satisfy the following system:
\be\left\lbrace\begin{array}{l}
\delta_1\frac{p-k_1}{p-1-\delta_1}=a_1\frac{p-k_1}{p-1-\delta_1}+b_1\frac{q-k_2}{q-1-\delta_2},\vspace{0.2cm}\\
\delta_2\frac{q-k_2}{q-1-\delta_2}=a_2\frac{q-k_2}{q-1-\delta_2}+b_2\frac{p-k_1}{p-1-\delta_1},\vspace{0.2cm}\\
\lambda_1\frac{p-1}{p-1-\delta_1}=1+a_1\frac{\lambda_1}{p-1-\delta_1}+b_1\frac{\mu_2}{q-1-\delta_2},\vspace{0.2cm}\\
\lambda_2\frac{q-1}{q-1-\delta_2}=1+b_2\frac{\mu_1}{p-1-\delta_1}+a_2\frac{\lambda_2}{q-1-\delta_2},\vspace{0.2cm}\\
\mu_1\frac{p-1}{p-1-\delta_1}=a_1\frac{\mu_1}{p-1-\delta_1}+b_1\frac{\lambda_2}{q-1-\delta_2},\vspace{0.2cm}\\
\mu_2\frac{q-1}{q-1-\delta_2}=b_2\frac{\lambda_1}{p-1-\delta_1}+a_2\frac{\mu_2}{q-1-\delta_2}.\\
\end{array}\right.\nonumber\ee
Then, we get
\begin{small}
\be\label{exposant1}
\gamma_1=\frac{p-k_1}{p-1-\delta_1}=\frac{(p-k_1)(q-1-a_2)+(q-k_2)b_1}{(p-1-a_1)(q-1-a_2)-b_1b_2},
\ee

\be
\gamma_2=\frac{q-k_2}{q-1-\delta_2}= \frac{(q-k_2)(p-1-a_1)+(p-k_1)b_2}{(p-1-a_1)(q-1-a_2)-b_1b_2},
\ee

\be
\alpha_1=\frac{\lambda_1}{p-1-\delta_1}=\frac{q-1-a_2}{(p-1-a_1)(q-1-a_2)-b_1b_2},
\ee

\be
\alpha_2=\frac{\lambda_2}{q-1-\delta_2}=\frac{p-1-a_1}{(p-1-a_1)(q-1-a_2)-b_1b_2},
\ee

\be
\beta_1=\frac{\mu_1}{p-1-\delta_1}=\frac{b_1}{(p-1-a_1)(q-1-a_2)-b_1b_2},
\ee

\be\label{exposant6}
\beta_2=\frac{\mu_2}{q-1-\delta_2}=\frac{b_1}{(p-1-a_1)(q-1-a_2)-b_1b_2}, 
\ee
\end{small}
\\ 
{which imply} estimate \eqref{estsssolex1}. Moreover, inequalities \eqref{gamma} are then equivalent to inequalities \eqref{conddelta}.
Let $(u,v)\in[\underline{u},\overline{u}]\times[\underline{v},\overline{v}]$.
 { On one hand}, we have 
\be\label{subsolu1-0} -\Delta_p\underline{u}\leq m^{p-1}C_1L_1(d(x))^{\lambda_1+\delta_1\gamma_1}L_2(d(x))^{\mu_1+\delta_1\beta_1}d(x)^{\delta_1\gamma_1-k_1}\mbox{ in }\Omega,\nonumber\ee
\be\label{subsolv1-0}-\Delta_q\underline{v}\leq m^{\sigma(q-1)}C_1'L_2(d(x))^{\lambda_2+\delta_2\alpha_2}L_1(d(x))^{\mu_2+\delta_2\beta_2}d(x)^{\delta_2\gamma_2-k_2}\mbox{ in }\Omega.\nonumber\ee
{ On the other hand}, 
\be\label{subsolu2-0}
K_1(x)\underline{u}^{a_1}v^{b_1}\geq  C_2m^{a_1+\sigma\vert b_1\vert} \Lambda_1(d(x))d(x)^{a_1\gamma_1+b_1\gamma_2-k_1}\mbox{ in }\Omega
\nonumber\ee
with 
$\Lambda_1= {L_1}^{1+a_1\alpha_1+b_1\beta_2}.{L_2}^{a_1\beta_1+b_1\alpha_2}$.
Similarly, 
\be K_2(x)\underline{v}^{a_2}u^{b_2}\geq  C_2'm^{\sigma a_2+\vert b_2\vert} \Lambda_2(d(x))d(x)^{a_2\gamma_2+b_2\gamma_1-k_2}\mbox{ in }\Omega,
\nonumber\ee
with 
$\Lambda_2= {L_2}^{1+a_2\alpha_2+b_2\beta_1}.{L_1}^{a_2\beta_2+b_2\alpha_1}$.
Then, under condition \eqref{condexp1-0} and thanks to \eqref{exposant1} to \eqref{exposant6}, $(\underline{u},\underline{v})$ is a subsolutions pair of problem $\boldsymbol{({\rm P})}$, for $m$ small enough. Next,
\be\label{supersolu1-0} -\Delta_p\overline{u}\geq m^{1-p}{C_3}L_1(d(x))^{\lambda_1+\delta_1\gamma_1}L_2(d(x))^{\mu_1+\delta_1\beta_1}d(x)^{\delta_1\gamma_1-k_1}\mbox{ in }\Omega,\nonumber\ee
\be\label{supersolv1-0} -\Delta_q\overline{v}\geq m^{\sigma(1-q)}{C_3'}L_2(d(x))^{\lambda_2+\delta_2\alpha_2}L_1(d(x))^{\mu_2+\delta_2\beta_2}d(x)^{\delta_2\gamma_2-k_2}\mbox{ in }\Omega.\nonumber\ee
Furthermore,
\be\label{subsolu2-0}
K_1(x)\overline{u}^{a_1}v^{b_1}\leq C_4m^{-a_1-\sigma\vert b_1\vert} \Lambda_1(d(x))d(x)^{a_1\gamma_1+b_1\gamma_2-k_1}\mbox{ in }\Omega.
\nonumber\ee
Similarly,
\be
K_2(x)\overline{v}^{a_2}u^{b_2}\leq  C_4'm^{-\sigma a_2-\vert b_2\vert} \Lambda_2(d(x))d(x)^{a_2\gamma_2+b_2\gamma_1-k_2}\mbox{ in }\Omega.
\nonumber\ee
Then under \eqref{condexp1-0} and thanks to \eqref{exposant1} to \eqref{exposant6}, $(\overline{u},\overline{v})$ is a supersolutions pair of problem $\boldsymbol{({\rm P})}$, for $m$ small enough. Therefore estimates \eqref{soussol} to \eqref{sursol} hold. Let us check that conditions \eqref{f1} to \eqref{delta1} of Theorem \ref{TH1} are satisfied. By estimates \eqref{psi1ex1} and \eqref{psi2ex1} and using the properties of the perturbations $L_1$ and $L_2$ given in point (a) and (c) of Remark \ref{PropL}, for all $\varepsilon>0$ there exist positive constants $C_1,C_2$ and $C_1',C_2'$ such that 
\be C_1 d(x)^{\gamma_1}\leq \underline{u},\overline{u}\leq C_2 d(x)^{\gamma_1-\varepsilon}\quad\text{ and }\quad C_1' d(x)^{\gamma_2}\leq \underline{v},\overline{v}\leq C_2' d(x)^{\gamma_2-\varepsilon}\quad\text{ in }\Omega.\nonumber\ee
{ In addition}, using \eqref{exposant1} to \eqref{exposant6}, there exist positive constants $\kappa_1,$ $\kappa_2$ such that
\be \vert f_1(x,u,v)\vert= K_1(x)u^{a_1}v^{b_1}\leq \kappa_1 d(x)^{\delta_1\gamma_1-k_1-\varepsilon}\quad\text{ in }\Omega\times\mathcal{C},\nonumber\ee
\be \vert f_2(x,u,v)\vert= K_2(x)v^{a_2}u^{b_2}\leq \kappa_2 d(x)^{\delta_2\gamma_2-k_2-\varepsilon}\quad\text{ in }\Omega\times\mathcal{C}\nonumber\ee
and 
\be \left| \frac{\partial f_1}{\partial u}(x,u,v)\right|= \vert a_1\vert K_1(x)u^{a_1-1}v^{b_1}\leq \kappa_1 d(x)^{\left(\delta_1\gamma_1-k_1-\varepsilon\right)-\gamma_1}\quad\text{ in }\Omega\times\mathcal{C},\nonumber\ee
\be \left| \frac{\partial f_2}{\partial v}(x,u,v)\right|=\vert a_2\vert K_2(x)v^{a_2-1}u^{b_2}\leq \kappa_2 d(x)^{\left(\delta_2\gamma_2-k_2-\varepsilon\right)-\gamma_2}\quad\text{ in }\Omega\times\mathcal{C}.\nonumber\ee
Since $\gamma_1\in(1-\frac{1}{p},1)$ and $\gamma_2\in(1-\frac{1}{q},1)$, inequalities \eqref{delta1} hold for $\varepsilon$ small enough. 
Then, applying Theorem \ref{TH1} we conclude about the existence of positive solutions to $\boldsymbol{({\rm P})}$ in $\mathrm{W}^{1,p}_0(\Omega)\times\mathrm{W}^{1,q}_0(\Omega)$ satisfying the estimates \eqref{est1u} and \eqref{est1v}. 
\\\\
{ Finally, using Theorem \ref{regu}, we get that any positive weak solutions pair to $\boldsymbol{({\rm P})}$} in the conical shell $\mathcal{C}$ belongs to $\mathscr{C}^{0,\alpha}\left(\overline{\Omega}\right)\times\mathscr{C}^{0,\alpha}\left(\overline{\Omega}\right)$, for some $0<\alpha<1$. { This proves (i) of Theorem \ref{theo}}.

\textbf{\textit{Alternative 2:}} In this part, we look for positive solutions $(u,v)\in \mathrm{W}^{1,p}_0(\Omega)\times\mathrm{W}^{1,q}_0(\Omega)$ by making the "Ansatz" that both function $u$ and $v$ behave like the distance function $d(x)$ for $x\in \Omega$ near the boundary $\partial\Omega$. For that, similarly as in \textit{Alternative 1}, we take in \eqref{sssoluex1} and \eqref{sssolvex1}
\be\label{conddelta2} k_1-1<\delta_1< p-1\quad\text{ and }\quad\text k_2-1<\delta_2< q-1,\ee 
\be \mathcal{L}_1={L_1}\quad \text{ and }\quad\mathcal{L}_2={L_2}\quad\text{ in }\Omega.\nonumber\ee
By Theorem \ref{gms}, $\psi_1\in{\rm W}^{1,p}_0(\Omega)$, $\psi_2\in{\rm W}^{1,q}_0(\Omega)$ and satisfy
\be\label{psi1psi2ex1}\psi_{1}(x)\,\sim \,d(x)\quad\text{ and }\quad\psi_2(x)\,\sim\,d(x)\quad\text{ in }\Omega.
\nonumber\ee
In view of satisfying estimates given in \eqref{estsssolex1},  we fix $\delta_1$ and $\delta_2$ as follows:
\be \delta_1=a_1+b_1\quad\text{ and }\quad\delta_2=a_2+b_2.\ee
{ Then}, \eqref{estsssolex1} holds and inequalities given in \eqref{conddelta2}{ entail} \eqref{gamma2}. The rest of the proof is as in \textit{Alternative 1}. { This proves (ii) of Theorem \ref{theo}}.

\textbf{\textit{Alternative 3:}} Now we combine our methods from \textit{Alternative 1} and \textit{Alternative 2}. We search for  positive solutions $(u,v)\in{\rm W}^{1,p}_0(\Omega)\times{\rm W}^{1,q}_0(\Omega)$ to problem $\textbf{(P)}$ by again making the "Ansatz" that 
\be u(x)\,\sim\,d(x)^{\gamma}L_1(d(x))^{\alpha}L_2(d(x))^{\beta}\text{ in }\Omega,\nonumber\ee for some $\gamma\in(1-\frac{1}{p},1)$ and $\alpha,\beta\in\RR$, and $v$ behave like the distance function in $\Omega$. For that, we take in \eqref{sssoluex1} and \eqref{sssolvex1}
\be\label{conddelta3} k_1-2+\frac{k_1-1}{p-1}<\delta_1< k_1-1\quad\text{ and }\quad\text k_2-1<\delta_2< q-1,\ee 
\be \mathcal{L}_1={L_1}^{\lambda_1}.{L_2}^{\mu_1}\quad \text{ and }\quad\mathcal{L}_2={L_2}^{\lambda_2}.{L_1}^{\mu_2}\quad\text{ in }\Omega,\nonumber\ee
where $\lambda_1,\,\lambda_2,\,\mu_1,\,\mu_2\in \RR$ are suitable exponents {to be fixed}.
By Theorem \ref{gms}, $\psi_1\in{\rm W}^{1,p}_0(\Omega)$, $\psi_2\in{\rm W}^{1,q}_0(\Omega)$ and satisfy
\be\label{psi1ex1bis}\psi_{1}(x)\,\sim \,d(x)^{\frac{p-k_1}{p-1-\delta_1}}L_1(d(x))^{\frac{\lambda_1}{p-1-\delta_1}}L_2(d(x))^{\frac{\mu_1}{p-1-\delta_1}}\text{ and }
\psi_{2}(x)\,\sim \,d(x)\text{ in }\Omega.
\nonumber\ee
{ In view of } \eqref{estsssolex1}, the exponents have to satisfy
\be\left\lbrace\begin{array}{rclrcl}
\delta_1\frac{p-k_1}{p-1-\delta_1}&=&a_1\frac{p-k_1}{p-1-\delta_1}+b_1,\qquad &\delta_2&=&b_2\frac{p-k_1}{p-1-\delta_1}+a_2,\vspace{0.2cm}\\
\lambda_1\frac{p-1}{p-1-\delta_1}&=&a_1\frac{\lambda_1}{p-1-\delta_1}+1,\qquad&\lambda_2&=&b_2\frac{\mu_1}{p-1-\delta_1}+1,\vspace{0.2cm}\\
\mu_1\frac{p-1}{p-1-\delta_1}&=&a_1\frac{\mu_1}{p-1-\delta_1},\qquad& \mu_2&=&b_2\frac{\lambda_1}{p-1-\delta_1}.\\
\end{array}\right.\nonumber\ee
Hence we obtain
\begin{small}
\bea
&\gamma=\frac{p-k_1}{p-1-\delta_1}= \frac{p-k_1+b_1}{p-1-a_1}\quad\text{ and }\quad\delta_2=a_2+b_2\frac{p-k_1+b_1}{p-1-a_1},&\nonumber\\
\nonumber\\
&\alpha=\frac{\lambda_1}{p-1-\delta_1}=\frac{1}{p-1-\delta_1}\quad\text{ and }\quad\beta=\frac{\mu_1}{p-1-\delta_1}=0.&\nonumber
\eea
\end{small}
The rest of the proof  is as in \textit{Alternative 1}. { This proves (iii) of Theorem \ref{theo} and (iv) is the corresponding symmetric case of (iii)}.
\hfill{$\square$}

\subsubsection{Proof of Theorem \ref{uniq}}
To prove uniqueness of solutions, we apply a classical argument of    {\sc Kransnoselskii} \cite{Kra}. Let $(u,v),(\tilde{u},\tilde{v})\in{\rm W}^{1,p}_0(\Omega)\times{\rm W}^{1,q}_0(\Omega),$  be two distinct positive weak solutions pairs to problem $\boldsymbol{({\rm P})}$ in the conical shell $\mathcal{C}=[\underline{u},\overline{u}]\times[\underline{v},\overline{v}]$, where $(\underline{u},\underline{v})$, $(\overline{u},\overline{v})$ are given in the  proof of Theorem \ref{theo}. This means that ${T}(u,v)=(u,v)$ and ${T}(\tilde{u},\tilde{v})=(\tilde{u},\tilde{v})$, which implies that, $T_1\circ T_2(u)=u$, $T_2\circ T_1(v)=v$ and $T_1\circ T_2(\tilde{u})=\tilde{u}$, $T_1\circ T_2( \tilde{v})=\tilde{v}$, respectively. Let us define 
\be\label{cmax} C_{{\rm max}}\eqdef \sup \lbrace C\in\RR_+,\quad C\tilde{u}\leq u\quad\text{ and } \quad C\tilde{v}\leq v\quad\text{a.e. in }\Omega\rbrace.\ee
$$ T_1\circ T_2(C_{{\rm max}}\tilde{u})=(C_{{\rm max}})^{\frac{b_1}{p-1-a_1}.\frac{b_2}{q-1-a_2}}T_1\circ T_2(\tilde{u})=(C_{{\rm max}})^{\frac{b_1}{p-1-a_1}.\frac{b_2}{q-1-a_2}}\tilde{u},$$
$$ T_2\circ T_1(C_{{\rm max}}\tilde{v})=(C_{{\rm max}})^{\frac{b_2}{q-1-a_2}.\frac{b_1}{p-1-a_1}}T_2\circ T_1(\tilde{v})=(C_{{\rm max}})^{\frac{b_2}{q-1-a_2}.\frac{b_1}{p-1-a_2}}\tilde{v}.$$
Therefore, by Theorem \ref{wmp}, both mappings $T_1\circ T_2$ and $T_2\circ T_1$ being (pointwise) order-preserving, we arrive at 
\bea\label{cont} 
u=T_1\circ T_2(u)\geq T_1\circ T_2(C_{{\rm max}}\tilde{u})=(C_{{\rm max}})^{\frac{b_1}{p-1-a_1}.\frac{b_2}{q-1-a_2}}\tilde{u},\\
\nonumber\\
\label{contbis}v=T_2\circ T_1(v)\geq T_2\circ T_1(C_{{\rm max}}\tilde{v})=(C_{{\rm max}})^{\frac{b_2}{q-1-a_2}.\frac{b_1}{p-1-a_1}}\tilde{v}.
\eea
From $0<C_{{\rm max }}<1$ combined with the subhomogeneity condition \eqref{H1} we deduce that 
$$C'_{{\rm max}}\eqdef (C_{{\rm max }})^{\frac{b_1}{p-1-a_1}.\frac{b_2}{q-1-a_2}}> C_{{\rm max }},$$
which contradicts the maximality of the constant $C_{{\rm max}}$ in \eqref{cmax}, by inequalities \eqref{cont} and \eqref{contbis}. { Then, $C_{{\rm max}}\geq 1$ which entails $\tilde{u}\leq u$ and $\tilde{v}\leq v$ a.e. in $\Omega$. Interchanging the roles of $(u,v)$ and $(\tilde{u},\tilde{v}), $ we finally get $(u,v)= (\tilde{u},\tilde{v})$ a.e. in $\Omega$.}
\hfill{$\square$}\\\\
{\bf Proof of Theorem \ref{cor}} { The proof is very similar to the proof of Theorem \ref{theo}. So we omit it.}\hfill{$\square$}
\subsubsection{Proof of Theorem \ref{theo2}}
\textbf{\textit{Alternative 1:}} Assume that $a_1+b_1=k_1-1$ and $k_2-1\leq a_2+b_2< q-1$. 
We { look for positive sub and supersolutions pairs $(\underline{u},\underline{v}),$ $(\overline{u},\overline{v})$ in the form:} 
\be\underline{u}=m\psi_{1}\quad\text{ and }\quad \overline{u}=m^{-1}(\varphi_{1,p})^{1-\varepsilon}\quad\text{ in }\Omega,\nonumber\ee
\be\underline{v}=m^{\sigma}\psi_{2}\quad\text{ and }\quad\overline{v}=m^{-\sigma}(\varphi_{1,q})^{1-\sigma\varepsilon}\quad\text{ in }\Omega,\nonumber\ee
where $\sigma>0$ is given by \eqref{condexp1-0}, $\varepsilon<1$ and $m<1$ are appropriate positive constants small enough and $\psi_{1}\in {\rm W}^{1,p}_0(\Omega)$ and $\psi_{2}\in{\rm W}^{1,q}_0(\Omega)$ are the respective solutions to
$$-\Delta_p w=K_1(x)w^{\delta_1}\quad\text{ in }\Omega; \quad w|_{\partial\Omega}=0,\quad w>0\quad\text{ in }\Omega,$$
$$-\Delta_q w=K_2(x)w^{\delta_2}\quad\text{ in }\Omega; \quad w|_{\partial\Omega}=0,\quad w>0\quad\text{ in }\Omega,$$
with $k_1-1<\delta_1<p-1$ and $a_2+b_2<\delta_2<q-1$. By Theorem \ref{gms}, both $\psi_{1}$ and $\psi_{2}$ behave like the distance function in $\Omega$. Let us remark that by estimate \eqref{vep}, $\underline{u}\leq\overline{u}$ and $\underline{v}\leq\overline{v}$ in $\Omega$, for $m$ small enough.
Now, let $1<r<\infty$ and $\gamma\in(0,1)$, then we have
\begin{small}
\be
\begin{array}{rl}
-\Delta_r\left[\left(\varphi_{1,r}\right)^{\gamma}\right]&=\gamma^{r-1}\left[ \lambda_{1,r}(\varphi_{1,r})^{\gamma(r-1)}-(\gamma-1)(r-1)(\varphi_{1,r})^{(\gamma-1)(r-1)-1}\vert\nabla\varphi_{1,r}\vert^r\right]\\
&\\
&=\gamma^{r-1}(\varphi_{1,r})^{-(1-\gamma)(r-1)-1}\left[ \lambda_{1,r}(\varphi_{1,r})^{r}+(1-\gamma)(r-1)\vert\nabla\varphi_{1,r}\vert^r\right]
\end{array}
\nonumber\ee 
\end{small}
 in $\Omega$. By estimate \eqref{vep}, we conclude that 
\be\label{estpuissvep} -\Delta_r\left[\left(\varphi_{1,r}(x)\right)^{\gamma}\right]\sim d(x)^{-(1-\gamma)(r-1)-1}\quad\text{ in }\Omega. \ee
So, let $(u,v)\in[\underline{u},\overline{u}]\times[\underline{v},\overline{v}].$
 { On one hand}, we have 
\be -\Delta_p\underline{u}\leq m^{p-1}C_1 K_1(x)d(x)^{\delta_1}\quad\text{ and }\quad
-\Delta_q\underline{v}\leq m^{q-1}C_1' K_2(x)d(x)^{\delta_2}\quad\mbox{ in}\,\Omega.\nonumber\ee
On the other hand, we also have 
\be
\begin{array}{rcl} 
K_1(x)\underline{u}^{a_1}v^{b_1}&\geq &
\left\lbrace \begin{array}{lr} m^{a_1+\sigma b_1} K_1(x){\psi_1}^{a_1}{\psi_2}^{b_1}&\quad\text{ if }\; b_1> 0,\vspace{0.2cm}\\
m^{a_1-\sigma b_1} K_1(x){\psi_1}^{a_1}({\varphi_{1,q}})^{b_1(1-\varepsilon\sigma)}&\quad\text{ if }\; b_1< 0,
\\
\end{array}
\right.
\\~
\\
&\geq &\displaystyle  m^{a_1+\sigma\vert b_1\vert}C_2K_1(x)d(x)^{k_1-1+\varepsilon \sigma b_1^-}\quad\mbox{ in }\Omega,
\end{array}
\nonumber\ee
in $\Omega$. Similarly, we get
\be K_2(x) \underline{v}^{a_2}u^{b_2}\geq m^{\sigma a_2+\vert b_2\vert}C_2'K_2(x)d(x)^{a_2+b_2+\varepsilon b_2^-}\quad\mbox{ in }\Omega.\nonumber\ee
Then, for $m$ and $\varepsilon$ small enough, $(\underline{u},\underline{v})$ is a subsolutions pair of problem $\boldsymbol{({\rm P})}$.
Similarly, using estimate \eqref{estpuissvep}, we obtain
\be -\Delta_p \overline{u}\geq m^{1-p}C_3 d(x)^{-1-\varepsilon(p-1)}\;\text{ and }\;
-\Delta_q \overline{v}\geq m^{\sigma(1-q)}C_3' d(x)^{-1-\varepsilon\sigma(q-1)}\;\mbox{ in }\Omega.\nonumber\ee
Furthermore, by \eqref{ln}, for any $\varepsilon'>0$, there exists $C_4=C_4(\varepsilon')>0$  such that 
\be
\begin{array}{rcl} 
K_1(x)\overline{u}^{a_1}v^{b_1}&\leq &
\left\lbrace \begin{array}{lr}  m^{-(a_1+\sigma b_1)} K_1(x){(\varphi_{1,p})}^{a_1(1-\varepsilon)}({\varphi_{1,q}})^{b_1(1-\varepsilon\sigma)}&\text{ if }\; b_1> 0,\vspace{0.2cm}\\
 m^{-(a_1-\sigma b_1)} K_1(x){(\varphi_{1,p})}^{a_1(1-\varepsilon)}{\psi_2}^{b_1}&\text{ if }\; b_1< 0,
\\
\end{array}
\right.
\\~
\\
&\leq &\displaystyle  m^{-(a_1+\sigma\vert b_1\vert)}C_4d(x)^{-1-\varepsilon(a_1+ \sigma b_1^+)-\varepsilon'}\quad\mbox{ in }\Omega,
\end{array}
\nonumber\ee
Similarly, we have 
\be K_2(x) \underline{v}^{a_2}u^{b_2}\leq m^{-(\sigma a_2+\vert b_2\vert)}
C_4'd(x)^{-k_1+a_2+b_2-\varepsilon(\sigma a_2+ b_2^+)-\varepsilon'}\quad\mbox{ in } \Omega,\nonumber\ee
with $C_4'=C_4'(\varepsilon')$. Then, for $m$, $\varepsilon$ and $\varepsilon'$ small enough, $(\overline{u},\overline{v})$ is a supersolutions pair of problem $\boldsymbol{({\rm P})}$.
Applying Theorem \ref{TH1}, we get the existence of  positive  solutions $(u,v)\in{\rm W}^{1,p}_0(\Omega)\times{\rm W}^{1,q}_0(\Omega)$ of $\boldsymbol{({\rm P})}$ satisfying \eqref{estsol1-4}. 
This proves (i) of Theorem \ref{theo2}.

\textbf{\textit{Alternative 2:}} When $k_1-1\leq a_1+b_1 <q-1$ and $a_2+b_2=k_2-1$, interchanging the role of $u$ and $v$, the proof of (ii) is the same as above.

\textbf{\textit{Alternative 3:}} Assume that \eqref{garage} is satisfied. To prove \textit{(3)}, we follow the proof in \textit{Alternative 1}. We construct positive sub and supersolutions pairs $(\underline{u},\underline{v}),$ $(\overline{u},\overline{v})\in{\rm W}^{1,p}_0(\Omega)\times{\rm W}^{1,q}_0(\Omega)$ in the form 
\be\underline{u}=m(\varphi_{1,p})^{\gamma+\varepsilon},\; \overline{u}=m^{-1}(\varphi_{1,p})^{\gamma-\varepsilon}\text{ and }\underline{v}=m^{\sigma}\psi,\;\overline{v}=m^{-\sigma}(\varphi_{1,q})^{1-\sigma\varepsilon}\text{ in }\Omega,\nonumber\ee
where $\sigma>0$ is given by \eqref{condexp1-0}, and $\varepsilon, m$ are appropriate positive constants small enough and $\psi\in{\rm W}^{1,q}_0(\Omega)$ is the solution (see Theorem \ref{gms}) of
$$-\Delta_q w=K_2(x)w^{\delta}\quad\text{ in }\Omega; \quad w|_{\partial\Omega}=0,\quad w>0\quad\text{ in }\Omega,$$
with $a_2+\gamma b_2<\delta<q-1$. (iv) is the symmetric case of \textit{(3)}
 by interchanging the role of $u$ and $v$.  Finally, from Theorem \ref{regu}, we get the H\"older regularity of $(u,v)$.\hfill{$\square$}
\subsection{Example 2}\label{ex5}
We consider now the following singular system 
$$ \boldsymbol{({\rm P})}\left\lbrace
\begin{array}{ll}
          -\Delta_pu=u^{a_1}v^{b_1}-u^{\alpha_1}v^{\beta_1}\quad\text{in }\Omega\, ;\quad u|_{\partial\Omega}=0,\quad u>0\quad\text{in }\Omega,\vspace{0.2cm}\\
          -\Delta_qv=v^{a_2}u^{b_2}-v^{\alpha_2}u^{\beta_2}\quad\text{in }\Omega\, ;\quad v|_{\partial\Omega}=0,\quad v>0\quad\text{in }\Omega,
 \end{array}
\right.
$$
where the above exponents satisfy
\be\label{condexp1-4}(p-1-a_1)-\sigma\vert b_1\vert>0\quad\text{ and }\quad(\alpha_1-a_1)-\sigma(\vert\beta_1\vert-\vert b_1\vert)>0,\ee
\be\label{condexp2-4} \sigma(q-1-a_2)-\vert b_2\vert>0\quad\text{ and }\quad \sigma(\alpha_2-a_2)-(\vert\beta_2\vert-\vert b_2\vert)>0,\ee
for some constant $\sigma>0$. Then, we have the following result:
\begin{theorem}~
\begin{enumerate}
\item { Let}
{\footnotesize\be\label{gamma12-4} \gamma_1=\frac{p(q-1-a_2)+qb_1}{(p-1-a_1)(q-1-a_2)-b_1b_2},\;\; \gamma_2=\frac{q(p-1-a_1)+pb_2}{(p-1-a_1)(q-1-a_2)-b_1b_2}	\ee}
and assume that 
\be\label{condexp3-4} 1-\frac{1}{p}<\gamma_1<1\quad\text{ and }\quad (\alpha_1-a_1)\gamma_1+(\beta_1-b_1)\gamma_2>0,\ee
\be\label{condexp4-4} 1-\frac{1}{q}<\gamma_2<1\quad\text{ and }\quad (\alpha_2-a_2)\gamma_2+(\beta_2-b_2)\gamma_1>0.\ee
Then, problem $\boldsymbol{({\rm P})}$ has a positive solution $(u,v)\in{\rm W}^{1,p}_0(\Omega)\times{\rm W}^{1,q}_0(\Omega)$ { satisfying}
\be\label{estsol1-4} u(x)\sim d(x)^{\gamma_1}\quad\text{ and }\quad v(x)\sim d(x)^{\gamma_2}\quad\text{ in }\Omega.\ee
In addition, we have $(u,v)\in\mathscr{C}^{0,\alpha}\left(\overline{\Omega}\right)\times\mathscr{C}^{0,\alpha}\left(\overline{\Omega}\right),$ for some $0<\alpha<1$.
\item Assume that 
\be\label{condexp5-4} -1<a_1+b_1<p-1\quad\text{ and }\quad (\alpha_1-a_1)+(\beta_1-b_1)>0,
\ee
\be\label{condexp6-4} -1<a_2+b_2<q-1\quad\text{ and }\quad (\alpha_2-a_2)+(\beta_2-b_2)>0.
\ee
Then, $\boldsymbol{({\rm P})}$ has a positive  solution $(u,v)\in{\rm W}^{1,p}_0(\Omega)\times{\rm W}^{1,q}_0(\Omega)$ satisfying
\be\label{estsol2-4} u(x)\sim d(x)\quad\text{ and }\quad v(x)\sim d(x)\quad\text{ in }\Omega.\ee
In addition, we have $(u,v)\in\mathscr{C}^{1,\alpha}\left(\overline{\Omega}\right)\times\mathscr{C}^{1,\alpha}\left(\overline{\Omega}\right),$ for some $0<\alpha<1$.
\item 
Let
\be\label{gamma3-4} \gamma=\frac{p+b_1}{p-1-a_1}\ee and assume that 
\be\label{condexp7-4} 1-\frac{1}{p}<\gamma<1\quad  \text{ and }\quad (\alpha_1-a_1)\gamma+(\beta_1-b_1)>0,\ee
\be\label{condexp8-4} -1<a_2+{b_2}{\gamma}<p-1 \quad\text{ and }\quad (\alpha_2-a_2)+(\beta_2-b_2)\gamma>0.\ee
Then, $\boldsymbol{({\rm P})}$ has a  positive solution  $(u,v)\in{\rm W}^{1,p}_0(\Omega)\times{\rm W}^{1,q}_0(\Omega)$ satisfying
\be\label{estsol3-4} u(x)\sim d(x)^{\gamma}\quad\text{ and }\quad v(x)\sim d(x)\quad\text{ in }\Omega.\ee
In addition, we have $(u,v)\in\mathscr{C}^{0,\alpha}\left(\overline{\Omega}\right)\times\mathscr{C}^{1,\alpha}\left(\overline{\Omega}\right),$ for some $0<\alpha<1$.
\item Symmetrically, set
\be\label{gamma4-4} \gamma=\frac{q+b_2}{q-1-a_2}\ee and assume that 
\be\label{condexp9-4}  -1<a_1+{b_1}{\gamma}<p-1\quad  \text{ and }\quad(\alpha_1-a_1)+(\beta_1-b_1)\gamma>0,\ee
\be\label{condexp10-4} 1-\frac{1}{q}<\gamma<1\quad\text{ and }\quad (\alpha_2-a_2)\gamma+(\beta_2-b_2)>0.\ee
Then, $\boldsymbol{({\rm P})}$ has a  positive  solution $(u,v)\in{\rm W}^{1,p}_0(\Omega)\times{\rm W}^{1,q}_0(\Omega)$ satisfying
\be\label{estsol4-4} u(x)\sim d(x)\quad\text{ and }\quad v(x)\sim d(x)^{\gamma}\quad\text{ in }\Omega.\ee
In addition, we have $(u,v)\in\mathscr{C}^{1,\alpha}\left(\overline{\Omega}\right)\times\mathscr{C}^{0,\alpha}\left(\overline{\Omega}\right),$ for some $0<\alpha<1$.
\end{enumerate}
\end{theorem}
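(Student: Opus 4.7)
My plan is to reduce each of the four parts to Theorem~\ref{TH1}, following alternative by alternative the four-case construction used in the proof of Theorem~\ref{theo}. The new feature compared with Example~1 is the subtracted term $u^{\alpha_i}v^{\beta_i}$ on each right-hand side; inspection shows that conditions \eqref{condexp3-4}--\eqref{condexp10-4} are precisely the statements that this subtractive term is of strictly lower order than the ``leading'' term $u^{a_i}v^{b_i}$ as $d(x)\to 0$ when $(u,v)$ has the prescribed boundary profile. The program is therefore to build sub- and supersolution pairs from scalar solutions furnished by Theorem~\ref{gms}, verify the sub/supersolution inequalities by the same powers-of-$m$ bookkeeping as in Theorem~\ref{theo}, and check that the subtractive terms can be absorbed both in these inequalities and in the hypotheses \eqref{f1}--\eqref{delta1}.

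\textbf{Part (i) in detail.} Take $\psi_1 \in {\rm W}^{1,p}_0(\Omega)$ and $\psi_2 \in {\rm W}^{1,q}_0(\Omega)$ from Theorem~\ref{gms}, solving $-\Delta_p w = w^{\delta_1}$ and $-\Delta_q w = w^{\delta_2}$ with $\delta_1 \in (-1,p-1)$ and $\delta_2 \in (-1,q-1)$ chosen so that $\psi_1 \sim d^{\gamma_1}$ and $\psi_2 \sim d^{\gamma_2}$, with $\gamma_1,\gamma_2$ given by \eqref{gamma12-4}; the determining identities are $\delta_1\gamma_1 = a_1\gamma_1 + b_1\gamma_2$ and $\delta_2\gamma_2 = a_2\gamma_2 + b_2\gamma_1$, and \eqref{condexp3-4}--\eqref{condexp4-4} place $\gamma_1 \in (1-\tfrac{1}{p},1)$ and $\gamma_2 \in (1-\tfrac{1}{q},1)$. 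With $\sigma > 0$ furnished by \eqref{condexp1-4}--\eqref{condexp2-4} and $m \in (0,1)$ small to be chosen, set
\[
\underline{u} = m\psi_1, \quad \overline{u} = m^{-1}\psi_1, \quad \underline{v} = m^{\sigma}\psi_2, \quad \overline{v} = m^{-\sigma}\psi_2.
\]
For the leading terms, the first inequalities in \eqref{condexp1-4}--\eqref{condexp2-4} arrange the $m$-powers in the right direction, exactly as in Alternative~1 of Theorem~\ref{theo}: one has $-\Delta_p\underline{u} \le C_1 m^{p-1} d^{a_1\gamma_1+b_1\gamma_2}$ while $\underline{u}^{a_1}v^{b_1} \ge C_2 m^{a_1+\sigma|b_1|} d^{a_1\gamma_1+b_1\gamma_2}$. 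For the subtractive term, condition \eqref{condexp3-4} yields
\[
\underline{u}^{\alpha_1} v^{\beta_1} \le C_3\, m^{\alpha_1 - \sigma|\beta_1|} d^{\alpha_1\gamma_1 + \beta_1\gamma_2} = o\bigl(d^{a_1\gamma_1+b_1\gamma_2}\bigr) \mbox{ as } d(x)\to 0,
\]
so on a boundary strip the subsolution inequality holds; on the complementary compact set the $m$-factors can be tuned further to dominate. The supersolution inequality and the $v$-equation follow symmetrically, using the second inequalities of \eqref{condexp1-4}--\eqref{condexp2-4} together with \eqref{condexp4-4}. The bounds \eqref{f1}--\eqref{df2} required by Theorem~\ref{TH1} follow by direct computation, and \eqref{delta1} reduces to $\gamma_1 > 1 - \tfrac{1}{p}$, $\gamma_2 > 1 - \tfrac{1}{q}$. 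Theorem~\ref{TH1} then yields a positive weak solution pair $(u,v) \in \mathcal{C}$, and Theorem~\ref{regu} upgrades it to $\mathscr{C}^{0,\alpha}(\overline{\Omega}) \times \mathscr{C}^{0,\alpha}(\overline{\Omega})$.

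\textbf{Remaining cases and main obstacle.} Parts (ii)--(iv) follow the same template using Alternatives~2, 3 and the symmetric version of Theorem~\ref{theo}: in (ii) both $\psi_i$ are taken to behave like $d$; in (iii)--(iv) one component behaves like $d$ and the other like $d^\gamma$ for the appropriate $\gamma$. The conditions \eqref{condexp5-4}--\eqref{condexp10-4} encode precisely the subdominance of the subtractive term in each mixed regime. The main obstacle I foresee is that $f_1, f_2$ may change sign inside $\mathcal{C}$ (wherever the subtractive term beats the leading one), so the cut-off $h_i$ introduced in \eqref{T1}--\eqref{T2} must dominate the full Lipschitz constant of $f_i$, not just that of its leading singular part as in Example~1. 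Since the subtractive exponents $\alpha_i - 1, \beta_i - 1$ are no more singular than $a_i - 1, b_i - 1$ -- this is what \eqref{condexp3-4}--\eqref{condexp10-4} assert after differentiation -- this only enlarges $\kappa_i$ in \eqref{df1}--\eqref{df2} without affecting \eqref{delta1}, and the Schauder fixed-point argument of Theorem~\ref{TH1} still applies.
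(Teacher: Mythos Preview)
Your approach is essentially the paper's: build $(\underline u,\overline u,\underline v,\overline v)=(m\psi_1,m^{-1}\psi_1,m^{\sigma}\psi_2,m^{-\sigma}\psi_2)$ from Theorem~\ref{gms}, show the subtracted term is lower order, and apply Theorem~\ref{TH1}. Two points deserve correction or sharpening.

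First, a genuine slip: in part~(i) you place $\delta_1\in(-1,p-1)$ and $\delta_2\in(-1,q-1)$. By Theorem~\ref{gms} with $k=0$, this range forces $\psi_i\sim d$, not $\psi_i\sim d^{\gamma_i}$. To obtain $\psi_1\sim d^{p/(p-1-\delta_1)}=d^{\gamma_1}$ with $\gamma_1\in(1-\tfrac1p,1)$ you must take $\delta_1\in(-2-\tfrac{1}{p-1},-1)$ (case~(iii) of Theorem~\ref{gms}), and similarly for $\delta_2$. This is exactly what the paper does.

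Second, your ``boundary strip plus interior compact'' argument for the subsolution can be replaced by the paper's cleaner uniform estimate: factor out the leading term to get
\[
\underline u^{\,a_1}v^{b_1}-\underline u^{\,\alpha_1}v^{\beta_1}
\ \ge\ m^{a_1+\sigma|b_1|}\psi_1^{a_1}\psi_2^{b_1}
\Bigl[1-m^{(\alpha_1-a_1)-\sigma(|\beta_1|+|b_1|)}\psi_1^{\alpha_1-a_1}\psi_2^{\beta_1-b_1}\Bigr],
\]
and observe that $\psi_1^{\alpha_1-a_1}\psi_2^{\beta_1-b_1}\sim d^{(\alpha_1-a_1)\gamma_1+(\beta_1-b_1)\gamma_2}$ is bounded on all of $\Omega$ by \eqref{condexp3-4}, so the bracket is $\ge\tfrac12$ once $m$ is small. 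This avoids the interface issue between strip and compact set. For the supersolution the paper simply drops the subtracted term, $\overline u^{\,a_1}v^{b_1}-\overline u^{\,\alpha_1}v^{\beta_1}\le \overline u^{\,a_1}v^{b_1}$, which is all you need. Your final paragraph on sign changes and the cut-offs $h_i$ is correct and in line with how Theorem~\ref{TH1} is set up.
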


\begin{proof}
We apply Theorem \ref{TH1} with
\be \underline{u}\equiv m \psi_{1},\quad \overline{u}\equiv m^{-1}\psi_{1}\quad \text{ and }\quad\underline{v}\equiv m^{\sigma} \psi_{2},\quad \overline{v}\equiv m^{-\sigma}\psi_{2}\quad\text{ in }\Omega,\nonumber\ee
where $\sigma>0$ is the constant given in \eqref{condexp1-4} and \eqref{condexp2-4}, $m<1$ is a positive constant small enough and  $\psi_{1}\in{\rm W}^{1,p}_0(\Omega)$, $\psi_{2}\in{\rm W}^{1,q}_0(\Omega)$ are given by Theorem \ref{gms} as the respective unique solutions of problems
\be -\Delta_p w=w^{\delta_1}\quad\text{ in }\Omega;\quad w\vert_{\partial\Omega}=0,\quad w>0\quad \text{ in }\Omega,\nonumber\ee
\be -\Delta_q w=w^{\delta_2}\quad\text{ in }\Omega;\quad w\vert_{\partial\Omega}=0,\quad w>0\quad \text{ in }\Omega,\nonumber\ee
satisfying some cone conditions we precise below. In the following \textit{Alternatives}, we choose $-2-\frac{1}{p-1}<\delta_1<p-1$ and $-2-\frac{1}{q-1}<\delta_2<q-1$ such that
\be\label{est-sssol-4}  -\Delta_p\psi_1\sim{ \psi_1}^{a_1}{\psi_2}^{b_1}\quad\text{ and }\quad-\Delta_q\psi_2\sim{ \psi_2}^{a_2}{\psi_1}^{b_2}\quad\text{ in }\Omega.\ee

\textbf{\textit{Alternative 1:}} Assume that conditions \eqref{condexp3-4} and \eqref{condexp4-4} hold. Then, arguing as in \textit{Alternative 1} in the proof of Theorem \ref{theo}, we choose $-2-\frac{1}{p-1}<\delta_1<-1$ and $-2-\frac{1}{q-1}<\delta_2<-1$ unique solutions pair of the following system:
\begin{small}
\be
\frac{ \delta_1 p}{p-1-\delta_1}=\frac{a_1 p}{p-1-\delta_1}+\frac{b_1 q}{q-1-\delta_2}\quad\text{ and }\quad
\frac{\delta_2 q}{q-1-\delta_2}=\frac{a_2 q}{q-1-\delta_2}+\frac{b_2 p}{p-1-\delta_2}.
\nonumber\ee
\end{small}
Since
\be \psi_1(x)\sim d(x)^{\gamma_1}\quad\text{ and }\quad\psi_2(x)\sim d(x)^{\gamma_2}\quad\text{ in }\Omega,\nonumber\ee 
where $\gamma_1=\frac{p}{p-1-\delta_1}$ and $\gamma_2=\frac{q}{q-1-\delta_2}$ are given by \eqref{gamma12-4}, estimates \eqref{est-sssol-4} {follows}.
Let $(u,v)\in[\underline{u},\overline{u}]\times[\underline{v},\overline{v}]$.
First, we have 
\be\label{subsolu1-4}
 -\Delta_p\underline{u}\leq m^{p-1}C_1d(x)^{\delta_1\gamma_1}\quad\text{ and }\quad-\Delta_q\underline{v}\leq m^{\sigma(q-1)}C_1'd(x)^{\delta_2\gamma_2}\quad\mbox{ in }\Omega.\ee
On the other hand, by \eqref{condexp1-4} and \eqref{condexp3-4},
\bea\label{subsolu2-4}
\underline{u}^{a_1}v^{b_1}-\underline{u}^{\alpha_1}v^{\beta_1}&\geq & m^{a_1+\sigma\vert b_1\vert}{\psi_1}^{a_1}{\psi_2}^{b_1}
\left[1-m^{\alpha_1-a_1-\sigma(\vert \beta_1\vert-\vert b_1\vert)}{\psi_1}^{\alpha_1-a_1}{\psi_2}^{\beta_1-b_1}\right]\nonumber\vspace{0.2cm}\\
&\geq&  m^{a_1+\sigma\vert b_1\vert} C_2d(x)^{a_1\gamma_1+b_1\gamma_2}\quad\mbox{ in }\Omega.
\eea
 for $m$ small enough. 
By \eqref{condexp2-4} and \eqref{condexp4-4}, we also have 
\be\label{subsolv2-4}\underline{v}^{a_2}u^{b_2}-\underline{v}^{\alpha_2}u^{\beta_2}\geq m^{\sigma a_2+\vert b_2\vert} C_2'd(x)^{a_2\gamma_2+b_2\gamma_1}\quad\mbox{ in }\Omega,\ee
for $m$ small enough. Then, under conditions \eqref{condexp1-4}, \eqref{condexp2-4}, \eqref{condexp3-4} and \eqref{condexp4-4} and for $m$ small enough, $(\underline{u},\underline{v})$ is a subsolutions pair of problem $\boldsymbol{({\rm P})}$.

Similarly, we have 
\be\label{supersolu1-4} 
-\Delta_p\overline{u}\geq m^{1-p}{C_3}d(x)^{\delta_1\gamma_1}\quad\text{ and }\quad -\Delta_q\overline{v}\geq m^{\sigma(1-q)}{C_3'}d(x)^{\delta_2\gamma_2}\quad\mbox{ in }\Omega.\ee
In addition,
\be\label{supersolu2-4}
\overline{u}^{a_1}v^{b_1}-\overline{u}^{\alpha_1}v^{\beta_1}\leq m^{-a_1-\sigma\vert b_1\vert}{\psi_1}^{a_1}{\psi_2}^{b_1}\leq  m^{-a_1-\sigma\vert b_1\vert}C_4d(x)^{a_1\gamma_1+b_1\gamma_2}
\ee
in $\Omega$. We obtain further
\be\label{supersolv2-4}
\overline{v}^{a_2}u^{b_2}-\overline{v}^{\alpha_2}u^{\beta_2}\leq  m^{-\sigma a_2-\vert b_2\vert}C_4'd(x)^{a_2\gamma_2+b_2\gamma_1}\quad\mbox{ in }\Omega.\ee
 Then, under conditions \eqref{condexp1-4}, \eqref{condexp2-4} and for $m$ small enough, $(\underline{u},\underline{v})$ is a supersolutions pair of problem $\boldsymbol{({\rm P})}$.

Applying Theorem \ref{TH1}, we get the existence of  positive  solutions $(u,v)\in{\rm W}^{1,p}_0(\Omega)\times{\rm W}^{1,q}_0(\Omega)$ of $\boldsymbol{({\rm P})}$ satisfying \eqref{estsol1-4}. Again from Theorem  \ref{regu}, $(u,v)$ are H\"older continuous. This proves the assertion (i).

\textbf{\textit{Alternative 2:}} Now, assume that conditions \eqref{condexp5-4} and \eqref{condexp6-4} are satisfied. Then, we choose $\delta_1=a_1+b_1$ and $\delta_2=a_2+b_2$. By Theorem \ref{gms}, since \be \psi_1(x)\sim d(x)\quad\text{ and }\quad\psi_2(x)\sim d(x)\quad\text{ in }\Omega,\nonumber\ee
estimates \eqref{est-sssol-4} hold.
 Instead of inequalities \eqref{subsolu1-4}, we have in this case 
\be -\Delta_p\underline{u}\leq  m^{p-1} C_1d(x)^{a_1+b_1}\;\text{ and }\; -\Delta_q\underline{v}\leq  m^{\sigma(q-1)} C_1' d(x)^{a_2+b_2}\quad\text{ in }\Omega.\nonumber\ee
From \eqref{condexp1-4}, \eqref{condexp2-4}, \eqref{condexp5-4} and \eqref{condexp6-4}, we get for any $(u,v)\in [\underline{u},\overline{u}]\times[\underline{v},\overline{v}]$:
\be \underline{u}^{a_1}v^{b_1}-\underline{u}^{\alpha_1}v^{\beta_1}\geq m^{a_1+\sigma\vert b_1\vert} C_2d(x)^{a_1+b_1}\quad\text{ in }\Omega,\nonumber\ee
\be \underline{v}^{a_2}u^{b_2}-\underline{v}^{\alpha_2}u^{\beta_2}\geq m^{\sigma a_2+\vert b_2\vert} C_2'd(x)^{a_2+b_2} \quad\text{ in }\Omega,\nonumber\ee
for $m$ small enough.
Then, under conditions \eqref{condexp1-4}, \eqref{condexp2-4}, \eqref{condexp5-4}, \eqref{condexp6-4} and for $m$ small enough, $(\underline{u},\underline{v})$ is a subsolution pair of problem $\boldsymbol{({\rm P})}$. 
 Instead of inequalities \eqref{supersolu1-4}, we have in this case in $\Omega$,
\be -\Delta_p\overline{u}\geq m^{1-p}C_3d(x)^{a_1+b_1}\quad\text{ and }\quad -\Delta_q\overline{v}\geq m^{\sigma(1-q)}C_3'd(x)^{a_2+b_2}.\nonumber\ee
In addition, instead of inequalities \eqref{supersolu2-4} and \eqref{supersolv2-4}, we get
\be
\overline{u}^{a_1}v^{b_1}-\overline{u}^{\alpha_1}v^{\beta_1}\leq  m^{-a_1-\sigma\vert b_1\vert}C_4d(x)^{a_1+b_1},
\nonumber\ee
\be
\overline{v}^{a_2}u^{b_2}-\overline{v}^{\alpha_2}u^{\beta_2}\leq  m^{-\sigma a_2-\vert b_2\vert}C_4'd(x)^{a_2+b_2},
\nonumber\ee
in $\Omega$. Then, under conditions \eqref{condexp1-4}, \eqref{condexp2-4} and for $m $ small enough, $(\overline{u},\overline{v})$ is a supersolution pair of problem $\boldsymbol{({\rm P})}$. 
Then, we conclude as in the \textit{{Alternative 1}} and (ii) is proved.

\textbf{\textit{Alternative 3:}} Now, assume conditions \eqref{condexp7-4} and \eqref{condexp8-4} hold. Then, arguing as in the proof of Theorem \ref{theo}, we choose $-2-\frac{1}{p}<\delta_1<-1$ and $-1<\delta_2<q-1$ unique solutions pair of the following {system}:
\be
\frac{\delta_1 p}{p-1-\delta_1}=\frac{a_1p}{p-1-\delta_1}+b_1\quad\text{ and }\quad \delta_2=a_2+\frac{b_2p}{p-1-\delta_2}.
\nonumber\ee
Estimates in \eqref{est-sssol-4} hold since
\be \psi_1(x)\sim d(x)^{\gamma}\quad\text{ and }\quad\psi_2(x)\sim d(x)\quad\text{ in }\Omega,\nonumber\ee 
{ with $\gamma$ given by \eqref{gamma3-4}}.
Instead of inequalities \eqref{subsolu1-4}, we have in this case 
\be -\Delta_p\underline{u}\leq  m^{p-1} C_1 d(x)^{\delta_1\gamma}\quad\text{ and }\quad-\Delta_q\underline{v}\leq  m^{\sigma(q-1)} C_1' d(x)^{\delta_2}\quad\text{ in }\Omega.\nonumber\ee
From \eqref{condexp1-4}, \eqref{condexp2-4}, \eqref{condexp7-4} and \eqref{condexp8-4}, we obtain now
\be \underline{u}^{a_1}v^{b_1}-\underline{u}^{\alpha_1}v^{\beta_1}\geq m^{a_1+\sigma\vert b_2\vert}C_2 d(x)^{a_1\gamma+b_1}\quad\mbox{ in }\Omega,\nonumber\ee
\be \underline{v}^{a_2}u^{b_2}-\underline{v}^{\alpha_2}u^{\beta_2}\geq m^{\sigma a_2+\vert b_2\vert}C_2' d(x)^{a_2+b_2\gamma} \quad\mbox{ in }\Omega,\nonumber\ee
for $m$ small enough.
Then, under conditions \eqref{condexp1-4}, \eqref{condexp2-4}, \eqref{condexp7-4}, \eqref{condexp8-4} and for $m $ small enough, $(\underline{u},\underline{v})$ is a subsolution pair of problem $\boldsymbol{({\rm P})}$. 
Instead of \eqref{supersolu1-4}, we have  
\be -\Delta_p\overline{u}\geq m^{1-p}C_3d(x)^{\delta_1\gamma}\quad\text{ and }\quad-\Delta_p\overline{v}\geq m^{\sigma(1-q)}C_3'd(x)^{\delta_2}\quad\mbox{ in}\,\Omega.\nonumber\ee
And inequalities \eqref{supersolu2-4} are replaced by
\be
\overline{u}^{a_1}v^{b_1}-\overline{u}^{\alpha_1}v^{\beta_1}\leq  m^{-a_1-\sigma\vert b_1\vert}C_4d(x)^{a_1\gamma+b_1}\quad\mbox{ in }\Omega,
\nonumber\ee
\be
\overline{v}^{a_2}u^{b_2}-\overline{v}^{\alpha_2}u^{\beta_2}\leq  m^{-\sigma a_2-\vert b_2\vert}C_4'd(x)^{a_2+b_2\gamma}\quad\mbox{ in }\Omega.
\nonumber\ee
Then, under conditions \eqref{condexp1-4}, \eqref{condexp2-4} and for $m $ small enough, $(\overline{u},\overline{v})$ is a supersolution pair of problem $\boldsymbol{({\rm P})}$. 
We conclude as in the \textit{{Alternative 1}}. Thus, (iii) is proved. Note that (iv) is the symmetric case of (iii)  by interchanging $u$ and $v$.
\end{proof}
{ We can further prove similarly (we omit the proof)}:
\begin{theorem}
Assume that conditions \eqref{condexp1-4} and \eqref{condexp2-4} are satisfied.
\begin{enumerate}
\item Assume that \be a_1+b_1=-1\quad\text{ and }\quad (\alpha_1-a_1)+(\beta_1-b_1)>0,\ee
\be -1\leq a_2+b_2< q-1 \quad\text{ and }\quad (\alpha_2-a_2)+(\beta_2-b_2)>0.\ee 
Then, for all $\varepsilon>0$ small enough, there exist  $C_1,C_2>0$ and $C_1',C_2'>0$ such that $\mathrm{\boldsymbol{(P)}}$ admits  positive  solutions $(u,v)\in{\rm W}^{1,p}_0(\Omega)\times{\rm W}^{1,q}_0(\Omega)$ satisfying:
\be C_1d(x)\leq u \leq C_2 d(x)^{1-\varepsilon}\text{ and } C_1'd(x) \leq v\leq C_2' d(x)^{1-\varepsilon\sigma}\text{ in }\Omega,\ee
with $\sigma>0$ is given in \eqref{condexp1-0}. In addition, we have $(u,v)\in \mathscr{C}^{0,\alpha}\left(\overline{\Omega}\right)\times\mathscr{C}^{0,\alpha}\left(\overline{\Omega}\right)$, for some $0<\alpha<1$.
\item Symmetrically, assume that 
\be -1\leq a_1+b_1<q-1\quad\text{ and }\quad(\alpha_1-a_1)+(\beta_1-b_1)>0,\ee 
\be a_2+b_2=-1\quad\text{ and }\quad (\alpha_2-a_2)+(\beta_2-b_2)>0.\ee
Then, for all $\varepsilon>0$ small enough, there exist  $C_1,C_2>0$ and $C_1',C_2'>0$ such that $\mathrm{\boldsymbol{(P)}}$ admits  positive  solutions $(u,v)\in{\rm W}^{1,p}_0(\Omega)\times{\rm W}^{1,q}_0(\Omega)$ satisfying:
\be C_1d(x)\leq u \leq C_2 d(x)^{1-\varepsilon}\text{ and } C_1'd(x) \leq v\leq C_2' d(x)^{1-\varepsilon\sigma}\text{ in }\Omega.\ee
In addition, we have $(u,v)\in \mathscr{C}^{0,\alpha}\left(\overline{\Omega}\right)\times\mathscr{C}^{0,\alpha}\left(\overline{\Omega}\right)$, for some $0<\alpha<1$.
\item Let
\be \gamma=\frac{p+b_1}{p-1-a_1}\nonumber\ee
and assume that 
\be
1-\frac{1}{p}<\gamma<1\quad\;and\;\quad (\alpha_1-a_1)\gamma+(\beta_1-b_1)>0,\ee
\be a_2+b_2\gamma=-1\quad\text{ and }\quad(\alpha_2-a_2)+(\beta_2-b_2)\gamma>0.
\ee
Then, for all $\varepsilon>0$ small enough, there exist  $C_1,C_2>0$ and $C_1',C_2'>0$ such that $\mathrm{\boldsymbol{(P)}}$ admits  positive  solutions $(u,v)\in{\rm W}^{1,p}_0(\Omega)\times{\rm W}^{1,q}_0(\Omega)$ satisfying:
{\small \be C_1d(x)^{\gamma+\varepsilon}\leq u \leq C_2 d(x)^{\gamma-\varepsilon}\text{ and } C_1'd(x) \leq v\leq C_2' d(x)^{1-\varepsilon\sigma}\text{ in }\Omega.\ee}
In addition, we have $(u,v)\in \mathscr{C}^{0,\alpha}\left(\overline{\Omega}\right)\times\mathscr{C}^{0,\alpha}\left(\overline{\Omega}\right)$, for some $0<\alpha<1$.
\item Symmetrically, let
\be \gamma=\frac{q+b_2}{q-1-a_2}\nonumber\ee
and assume that 
\be\label{garage2}
a_1+b_1\gamma=-1\quad\;and\;\quad (\alpha_1-a_1)+(\beta_1-b_1)\gamma>0,\ee
\be 1-\frac{1}{q}<\gamma<1 \quad\text{ and }\quad (\alpha_2-a_2)\gamma+(\beta_2-b_2)>0.
\ee
Then, for all $\varepsilon>0$ small enough, there exist  $C_1,C_2>0$ and $C_1',C_2'>0$ such that $\mathrm{\boldsymbol{(P)}}$ admits  positive  solutions $(u,v)\in{\rm W}^{1,p}_0(\Omega)\times{\rm W}^{1,q}_0(\Omega)$ satisfying:
{\small \be C_1d(x)\leq u \leq C_2 d(x)^{1-\varepsilon}\text{ and }C_1'd(x)^{\gamma+\varepsilon\sigma} \leq v\leq C_2' d(x)^{\gamma-\varepsilon\sigma}\text{ in }\Omega.\ee}
In addition, we have $(u,v)\in \mathscr{C}^{0,\alpha}\left(\overline{\Omega}\right)\times\mathscr{C}^{0,\alpha}\left(\overline{\Omega}\right)$, for some $0<\alpha<1$.
\end{enumerate}
\end{theorem}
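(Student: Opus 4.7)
The plan is to follow verbatim the strategy of Theorem \ref{theo2}, adapted to the source terms $f_1(x,u,v)=u^{a_1}v^{b_1}-u^{\alpha_1}v^{\beta_1}$ and $f_2(x,u,v)=v^{a_2}u^{b_2}-v^{\alpha_2}u^{\beta_2}$ of Example 2, so that we may conclude by invoking Theorem \ref{TH1}. In each case (i)--(iv) we construct sub- and supersolutions pairs $(\underline u,\underline v),(\overline u,\overline v)\in {\rm W}^{1,p}_0(\Omega)\times{\rm W}^{1,q}_0(\Omega)$ by combining functions built from Theorem \ref{gms} with powers of the first eigenfunctions. Specifically, for part (i) where $a_1+b_1=-1$ is the critical value, I would take
\[\underline u=m\psi_1,\quad \overline u=m^{-1}(\varphi_{1,p})^{1-\varepsilon},\quad \underline v=m^{\sigma}\psi_2,\quad \overline v=m^{-\sigma}(\varphi_{1,q})^{1-\sigma\varepsilon},\]
where $\psi_1\in{\rm W}^{1,p}_0(\Omega)$ and $\psi_2\in{\rm W}^{1,q}_0(\Omega)$ are the solutions given by Theorem \ref{gms} of $-\Delta_p w=w^{\delta_1}$ and $-\Delta_q w=w^{\delta_2}$ with $-1<\delta_1<p-1$ and $a_2+b_2<\delta_2<q-1$, so that $\psi_i\sim d(x)$. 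For (iii) we replace $\psi_1$ by the solution of $-\Delta_p w=w^{\delta_1}$ in the range giving $\psi_1\sim d(x)^\gamma$ and take $\overline u=m^{-1}(\varphi_{1,p})^{\gamma-\varepsilon}$, $\underline u=m(\varphi_{1,p})^{\gamma+\varepsilon}$; (ii) and (iv) are the symmetric counterparts.

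The verification of the subsolution inequalities is straightforward: for $\underline u$ one writes $-\Delta_p\underline u=m^{p-1}\psi_1^{\delta_1}\leq C m^{p-1}d(x)^{\delta_1}$ with $\delta_1>-1$ (hence strictly less singular than $d(x)^{-1}$), while the right-hand side is bounded below by $m^{a_1+\sigma|b_1|}C'\,d(x)^{a_1+b_1}=m^{a_1+\sigma|b_1|}C'\,d(x)^{-1}$ after using \eqref{condexp1-4} and the perturbation condition $(\alpha_1-a_1)+(\beta_1-b_1)>0$ to absorb $u^{\alpha_1}v^{\beta_1}$ into a small correction for $m$ small. The supersolution inequalities use formula \eqref{estpuissvep} from the proof of Theorem \ref{theo2}, which gives $-\Delta_p\overline u\sim m^{1-p}d(x)^{-1-\varepsilon(p-1)}$, and on the right-hand side the Karamata-type bound (dropping the negative term) produces $\overline u^{a_1}v^{b_1}\leq Cm^{-a_1-\sigma|b_1|}d(x)^{-1-\varepsilon(a_1+\sigma b_1^+)-\varepsilon'}$. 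The strict subhomogeneity $p-1>a_1+\sigma|b_1|\geq a_1+\sigma b_1^+$ guaranteed by \eqref{condexp1-4} makes the left-hand side dominant near $\partial\Omega$ for $\varepsilon,\varepsilon',m$ small. The corresponding inequalities for $v$ use \eqref{condexp2-4} identically.

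The main obstacle I expect is exactly the balancing at the critical exponent: when $a_1+b_1=-1$ the ``natural" power of $d(x)$ on both sides of the $u$-equation coincides, so the construction cannot produce a solution behaving strictly like $d(x)$; the $\varepsilon$-slack between the lower bound $d(x)$ and the upper bound $d(x)^{1-\varepsilon}$ on $u$ (and the analogous $d(x)^{1-\sigma\varepsilon}$ for $v$) is unavoidable, and it is precisely what permits the inequalities. Care must be taken when verifying the supersolution inequality to absorb not only the leading $\varepsilon$-loss but also the logarithmic slack inherited from \eqref{ln} via a small $\varepsilon'$, and to cross-check that the interval $[\underline u,\overline u]$ is non-empty, i.e.\ $\underline u\leq\overline u$ on $\Omega$, which follows from \eqref{vep} for $m$ small enough. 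In parts (iii)--(iv) one must additionally check that the mixed critical condition $a_2+b_2\gamma=-1$ (resp.\ $a_1+b_1\gamma=-1$) is compatible with the choice of $\delta_2$ (resp.\ $\delta_1$) in the equation for $\psi_2$ (resp.\ $\psi_1$); this is where \eqref{condexp7-4}--\eqref{condexp10-4} are exploited.

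Once the conical shell $\mathcal{C}=[\underline u,\overline u]\times[\underline v,\overline v]$ has been built, conditions \eqref{f1}--\eqref{delta1} of Theorem \ref{TH1} are checked exactly as in \emph{Alternative 1} of the proof of Theorem \ref{theo}, using that on $\mathcal{C}$ the $d(x)$-estimates on $\overline u,\overline v$ propagate to polynomial bounds on $f_i$ and $\partial_{u,v}f_i$ with exponents compatible with \eqref{delta1} for $\varepsilon$ small; one then applies Schauder's fixed point theorem to obtain the claimed positive weak solution pair $(u,v)\in\mathcal{C}$. The H\"older continuity $(u,v)\in\mathscr{C}^{0,\alpha}(\overline{\Omega})\times\mathscr{C}^{0,\alpha}(\overline{\Omega})$ then follows from Theorem \ref{regu} in the appendix, closing the argument.
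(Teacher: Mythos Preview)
Your proposal is correct and follows exactly the approach the paper intends: the paper explicitly omits this proof, saying only ``we can further prove similarly,'' and the intended template is precisely the proof of Theorem \ref{theo2} combined with the sub-/supersolution estimates already carried out in \textit{Alternative 1} of Example~2. Your choice of $(\underline u,\underline v,\overline u,\overline v)$ matches Theorem \ref{theo2} verbatim (with $k_1=k_2=0$, $L_1=L_2\equiv 1$), and the verification steps are the right ones. One harmless over-complication: since in Example~2 there is no Karamata perturbation ($K_1=K_2\equiv1$), the extra $\varepsilon'$ coming from \eqref{ln} is not needed here and can be dropped.
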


\subsection{Example 3}\label{ex2}
In this section, we consider the following singular competition system 
$$ \boldsymbol{({\rm P})}\left\lbrace
\begin{array}{ll}
          -\Delta_pu=\lambda_1 u^{\alpha_1}-u^{\beta_1}-\mu_1 u^{a_1}v^{b_1}\quad\text{in }\Omega\, ;\quad u|_{\partial\Omega}=0,\quad u>0\quad\text{in }\Omega,\vspace{0.2cm}\\
          -\Delta_qv=\lambda_2 v^{\alpha_2}-v^{\beta_2}-\mu_2 v^{a_2}u^{b_2}\quad\text{in }\Omega\, ;\quad v|_{\partial\Omega}=0,\quad v>0\quad\text{in }\Omega,
 \end{array}
\right.
$$

where 
$\lambda_1,\lambda_2$ and $\mu_1,\mu_2$ are positive and $\alpha_1,\alpha_2,\beta_1,\beta_2, a_1,a_2, b_1,b_2$ satisfy
\be\label{condexp1} -2-\frac{1}{p-1}<\alpha_1<p-1,\quad\alpha_1<\beta_1\quad\text{ and }\quad a_1-\alpha_1-\sigma\vert b_1\vert >0,\ee
\be\label{condexp2} -2-\frac{1}{q-1}<\alpha_2<q-1,\quad\alpha_2<\beta_2\quad\text{ and }\quad \sigma(a_2-\alpha_2)-\vert b_2\vert >0,\ee
for some constant $\sigma>0$. { Then, we have}
\begin{theorem}\label{ex3}
\begin{enumerate}
\item 
Assume that 
\be\label{condexp3} -2-\frac{1}{p-1}<\alpha_1<-1\quad \text{ and }\quad\frac{(a_1-\alpha_1)p}{p-1-\alpha_1}+\frac{b_1q}{q-1-\alpha_2}>0,\ee
\be\label{condexp4} -2-\frac{1}{q-1}<\alpha_2<-1\quad \text{ and }\quad\frac{(a_2-\alpha_2)q}{q-1-\alpha_2}+\frac{b_2p}{p-1-\alpha_1}>0.\ee
Then, $\boldsymbol{({\rm P})}$ admits  positive solutions $(u,v)\in{\rm W}^{1,p}_0(\Omega)\times{\rm W}^{1,q}_0(\Omega)$ satisfying:
\be\label{estsol1} u(x)\sim d(x)^{\frac{p}{p-1-\alpha_1}}\quad\text{ and }\quad v(x)\sim d(x)^{\frac{q}{q-1-\alpha_2}}\quad\text{ in }\Omega.\ee
In addition, we have $(u,v)\in\mathscr{C}^{0,\alpha}\left(\overline{\Omega}\right)\times\mathscr{C}^{0,\alpha}\left(\overline{\Omega}\right),$ for some $0<\alpha<1$.
\item Assume that 
\be\label{condexp5} -1<\alpha_1<p-1\quad\text{ and }\quad a_1-\alpha_1+b_1>0,\ee
\be\label{condexp6} -1<\alpha_2<q-1\quad\text{ and }\quad a_2-\alpha_2+b_2>0.\ee
Then, $\boldsymbol{({\rm P})}$ admits positive solutions $(u,v)\in{\rm W}^{1,p}_0(\Omega)\times{\rm W}^{1,q}_0(\Omega)$ satisfying:
\be\label{estsol3} u(x)\sim d(x)\quad\text{ and }\quad v(x)\sim d(x)\quad\text{ in }\Omega.\ee
In addition, we have $(u,v)\in\mathscr{C}^{1,\alpha}\left(\overline{\Omega}\right)\times\mathscr{C}^{1,\alpha}\left(\overline{\Omega}\right),$ for some $0<\alpha<1$.
\item Assume that 
\be\label{condexp7} -2-\frac{1}{p-1}<\alpha_1<-1\quad \text{ and }\quad (a_1-\alpha_1+b_1)p-b_1(\alpha_1+1)>0  ,\ee
\be\label{condexp8} -1<\alpha_2<q-1\quad \text{ and }\quad (a_2-\alpha_2+b_2)p-(a_2-\alpha_2)(\alpha_1+1)>0.\ee
Then, $\boldsymbol{({\rm P})}$ admits positive solutions $(u,v)\in{\rm W}^{1,p}_0(\Omega)\times{\rm W}^{1,q}_0(\Omega)$ satisfying:
\be\label{estsol5} u(x)\sim d(x)^{\frac{p}{p-1-\alpha_1}}\quad\text{ and }\quad v(x)\sim d(x)\quad\text{ in }\Omega.\ee
In addition, we have $(u,v)\in\mathscr{C}^{0,\alpha}\left(\overline{\Omega}\right)\times\mathscr{C}^{1,\alpha}\left(\overline{\Omega}\right),$ for some $0<\alpha<1$.
\item Symmetrically, assume that 
\be\label{condexp9} -1<\alpha_1<p-1\quad  \text{ and }\quad (a_1-\alpha_1+b_1)q-(a_1-\alpha_1)(\alpha_2+1)>0,\ee
\be\label{condexp10} -2-\frac{1}{q-1}<\alpha_2<-1\quad \text{ and }\quad (a_2-\alpha_2+b_2)q-b_2(\alpha_2+1)>0.\ee
Then, $\boldsymbol{({\rm P})}$ admits positive solutions $(u,v)\in{\rm W}^{1,p}_0(\Omega)\times{\rm W}^{1,q}_0(\Omega)$ satisfying:
\be\label{estsol7} u(x)\sim d(x)\quad\text{ and }\quad v(x)\sim d(x)^{\frac{q}{q-1-\alpha_2}}\quad\text{ in }\Omega.\ee
In addition, we have $(u,v)\in\mathscr{C}^{1,\alpha}\left(\overline{\Omega}\right)\times\mathscr{C}^{0,\alpha}\left(\overline{\Omega}\right),$ for some $0<\alpha<1$.
\end{enumerate}
\end{theorem}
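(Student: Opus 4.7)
The approach mirrors the proofs of Theorems \ref{theo} and the first theorem of Section \ref{ex5}: we build sub\,- and supersolution pairs from scalar auxiliary problems solved by Theorem \ref{gms}, and then apply Theorem \ref{TH1}. Specifically, with $\sigma>0$ given by \eqref{condexp1}--\eqref{condexp2}, we let $\psi_1\in\mathrm{W}^{1,p}_0(\Omega)$ and $\psi_2\in\mathrm{W}^{1,q}_0(\Omega)$ be the unique positive solutions of
\[
-\Delta_p\psi_1=\psi_1^{\,\alpha_1}\quad\text{in }\Omega,\qquad -\Delta_q\psi_2=\psi_2^{\,\alpha_2}\quad\text{in }\Omega,
\]
and set $\underline{u}=m\psi_1$, $\overline{u}=m^{-1}\psi_1$, $\underline{v}=m^{\sigma}\psi_2$, $\overline{v}=m^{-\sigma}\psi_2$, with $m\in(0,1)$ to be chosen small. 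The behaviour of $\psi_1,\psi_2$ near $\partial\Omega$ is then read off from Theorem \ref{gms}: in alternative (i) the conditions \eqref{condexp3}--\eqref{condexp4} place $\alpha_1,\alpha_2$ in the regime $\text{(iii)}$ of Theorem \ref{gms} (with $k=0$), yielding $\psi_1\sim d^{\gamma_1}$ and $\psi_2\sim d^{\gamma_2}$ with $\gamma_1=\frac{p}{p-1-\alpha_1}\in(1-\tfrac{1}{p},1)$ and $\gamma_2=\frac{q}{q-1-\alpha_2}\in(1-\tfrac{1}{q},1)$; in (ii) we are in regime (i) of Theorem \ref{gms}, so $\psi_1\sim d\sim\psi_2$; alternative (iii) is the mixed case and (iv) is symmetric to (iii).

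Next I would check the sub\,- and supersolution inequalities. For $\underline{u}$ one computes $-\Delta_p\underline{u}=m^{p-1}\psi_1^{\alpha_1}\sim m^{p-1}d(x)^{\alpha_1\gamma_1}$, and compares with
\[
\lambda_1\underline{u}^{\alpha_1}-\underline{u}^{\beta_1}-\mu_1\underline{u}^{a_1}v^{b_1}
\ge \lambda_1 m^{\alpha_1}\psi_1^{\alpha_1}-m^{\beta_1}\psi_1^{\beta_1}
-\mu_1 m^{a_1-\sigma|b_1|}\psi_1^{a_1}\psi_2^{b_1}
\]
uniformly in $v\in[\underline{v},\overline{v}]$ (choosing $v=\overline{v}$ or $\underline{v}$ depending on the sign of $b_1$). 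Near $\partial\Omega$ the dominant power is $d(x)^{\alpha_1\gamma_1}$, since $\beta_1>\alpha_1$ forces $\beta_1\gamma_1>\alpha_1\gamma_1$, and the conditions in each alternative — which read $(a_1-\alpha_1)\gamma_1+b_1\gamma_2>0$ (with $\gamma_2$ equal to $\frac{q}{q-1-\alpha_2}$ in (i), to $1$ in (ii) and (iii), and so on) — ensure that $a_1\gamma_1+b_1\gamma_2>\alpha_1\gamma_1$. Hence the interaction term is also absorbed near $\partial\Omega$. Meanwhile $\alpha_1<p-1$ in \eqref{condexp1} guarantees $m^{p-1}\le m^{\alpha_1}$ for $m<1$, and $a_1-\alpha_1-\sigma|b_1|>0$ guarantees $m^{a_1-\sigma|b_1|}\le m^{\alpha_1}$, so the subsolution inequality holds for $m$ small. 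The supersolution inequality for $\overline{u}=m^{-1}\psi_1$ is easier: we have $-\Delta_p\overline{u}\sim m^{1-p}d(x)^{\alpha_1\gamma_1}$, while the two negative terms in the right-hand side only help, and $\lambda_1\overline{u}^{\alpha_1}=\lambda_1 m^{-\alpha_1}\psi_1^{\alpha_1}$ is dominated by $m^{1-p}\psi_1^{\alpha_1}$ again because $\alpha_1<p-1$. The analogous inequalities for $\underline{v},\overline{v}$ use \eqref{condexp2} and the corresponding condition from \eqref{condexp4}, \eqref{condexp6}, \eqref{condexp8} or \eqref{condexp10}.

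To apply Theorem \ref{TH1}, I would then verify hypotheses \eqref{f1}--\eqref{delta1}. With $f_1(x,u,v)=\lambda_1 u^{\alpha_1}-u^{\beta_1}-\mu_1 u^{a_1}v^{b_1}$, on the conical shell $\mathcal{C}=[\underline{u},\overline{u}]\times[\underline{v},\overline{v}]$ each term is bounded by a power of $d(x)$; the dominant one near $\partial\Omega$ is $d(x)^{\alpha_1\gamma_1}$, giving $|f_1|\le k_1 d(x)^{\delta_1}$ with $\delta_1=\alpha_1\gamma_1$, and analogously for $|\partial_u f_1|\le\kappa_1 d(x)^{\delta_1-\gamma_1}$ (taking $b_1:=\gamma_1$ in the notation of Theorem \ref{TH1} since $\overline{u}\le C d(x)^{\gamma_1}$). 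The condition \eqref{delta1} then reduces to $\gamma_1>1-\tfrac{1}{p}$, exactly the half encoded in the hypothesis for the relevant alternative (and similarly for $\gamma_2$ in the second equation). Theorem \ref{TH1} then yields the weak solutions pair, and Theorem \ref{regu} upgrades them to the asserted $\mathscr{C}^{0,\alpha}$ or $\mathscr{C}^{1,\alpha}$ class according as the corresponding $\gamma_i<1$ or $\gamma_i=1$.

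The main obstacle will be the bookkeeping in the four alternatives: matching the exponent conditions \eqref{condexp3}--\eqref{condexp10} to the inequalities needed for the negative interaction term $-\mu_i\underline{u}^{a_i}v^{b_i}$ to be absorbed near $\partial\Omega$, and separately checking that the $m$-scaling works thanks to \eqref{condexp1}--\eqref{condexp2}. Conceptually the proof is entirely parallel to that of the preceding theorem in Section \ref{ex5}; the only new feature is the self-absorbing term $-u^{\beta_i}$, whose sign assists both the sub\,- and supersolution inequalities whenever $\beta_i>\alpha_i$, so it plays no active role in the argument.
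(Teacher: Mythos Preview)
Your proposal is correct and follows essentially the same approach as the paper: the paper constructs $\underline{u}=m\psi_1$, $\overline{u}=m^{-1}\psi_1$, $\underline{v}=m^{\sigma}\psi_2$, $\overline{v}=m^{-\sigma}\psi_2$ with $\psi_i$ solving $-\Delta_r\psi_i=\psi_i^{\alpha_i}$, reads off the boundary behaviour from Theorem~\ref{gms} in each alternative, verifies the sub/supersolution inequalities by factoring out the dominant term $\lambda_i(m\psi_i)^{\alpha_i}$ and using \eqref{condexp1}--\eqref{condexp2} together with the alternative-specific condition to absorb the remaining bracket, and then invokes Theorem~\ref{TH1} and Theorem~\ref{regu}. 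One small correction: the term $-\underline{u}^{\beta_1}$ does \emph{not} assist the subsolution inequality (it makes the right-hand side smaller); rather, it is absorbed into the dominant $\lambda_1\underline{u}^{\alpha_1}$ term via $\beta_1>\alpha_1$ and $m$ small, exactly as you already argue in your main estimate.
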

\begin{proof}
 We apply Theorem \ref{TH1} with 
\be\label{psi1} \underline{u}\equiv m \psi_{1},\quad \overline{u}\equiv m^{-1}\psi_{1}\quad\text{ and }\quad \underline{v}\equiv m^{\sigma} \psi_{2},\quad \overline{v}\equiv m^{-\sigma}\psi_{2}\quad\text{ in }\Omega,\ee
where $\sigma>0$ is the constant given in \eqref{condexp1} and \eqref{condexp2}, $m<1$ is a suitable small positive constant and  $\psi_{1}\in{\rm W}^{1,p}_0(\Omega)$, $\psi_{2}\in{\rm W}^{1,q}_0(\Omega)$ are (given by Theorem \ref{gms}) the respective unique solutions of problems
\be -\Delta_p w=w^{\alpha_1}\quad\text{ in }\Omega;\quad w\vert_{\partial\Omega}=0,\quad w>0\quad \text{ in }\Omega,\nonumber\ee
\be -\Delta_q w=w^{\alpha_2}\quad\text{ in }\Omega;\quad w\vert_{\partial\Omega}=0,\quad w>0\quad \text{ in }\Omega.\nonumber\ee

\textit{\textbf{Alternative 1}}: Assume conditions \eqref{condexp3} and \eqref{condexp4} are satisfied. Then, from Theorem \ref{gms}, we get
\be\label{estsssol1} \psi_{1}(x)\sim d(x)^{\frac{p}{p-1-\alpha_1}}\quad \text{ and }\quad \psi_{2}(x)\sim d(x)^{\frac{q}{q-1-\alpha_2}}\quad \text{ in }\Omega.\nonumber\ee
Let us prove that, for $m$ small enough, $(\underline{u},\underline{v})$ and $(\overline{u},\overline{v})$ are respectively sub and supersolutions pairs of $\boldsymbol{({\rm P})}$.
Let $(u,v)\in[\underline{u},\overline{u}]\times\left[\underline{v},\overline{v}\right]$.
We have in $\Omega$,
\be\label{subsolu11} -\Delta_p\underline{u}\leq  m^{p-1} {C_1} d(x)^{\frac{\alpha_1 p}{p-1-\alpha_1}}\;\text{ and }\;-\Delta_q\underline{v}\leq  m^{\sigma(q-1)} {C_1'} d(x)^{\frac{\alpha_2q}{q-1-\alpha_2}}.\ee
From \eqref{condexp1} and \eqref{condexp3}, we obtain:
\be\label{subsolu21}
\begin{array}{l}
\lambda_1\underline{u}^{\alpha_1}-\underline{u}^{\beta_1}-\mu_1\underline{u}^{a_1}v^{b_1}\vspace{0.2cm}\\
\geq \lambda_1(m\psi_1)^{\alpha_1}\left[1-\frac{1}{\lambda_1}(m\psi_1)^{\beta_1-\alpha_1}-\frac{\mu_1}{\lambda_1}(m\psi_1)^{a_1-\alpha_1}\left(m^{-\sigma{\rm sign}(b_1)}\psi_2\right)^{b_1}\right]
\vspace{0.2cm}\\
\geq \frac{\lambda_1}{2}m^{\alpha_1}{C_2}d(x)^{\frac{\alpha_1p}{p-1-\alpha_1}},
\end{array}
\ee
for $m$ small enough. In addition, from \eqref{condexp2} and \eqref{condexp4}, we get: 
\be\label{subsolv21}\lambda_2\underline{v}^{\alpha_2}-\underline{v}^{\beta_2}-\mu_2\underline{v}^{a_2}u^{b_2}\geq \frac{\lambda_1}{2}m^{\sigma\alpha_2}{C_2'} d(x)^{\frac{\alpha_2q}{q-1-\alpha_2}}\quad\text{ in }\Omega,\ee
for $m$ small enough. Then, under conditions \eqref{condexp3}, \eqref{condexp4} and for $m$ small enough, $(\underline{u},\underline{v})$ is a subsolutions pair of problem $\boldsymbol{({\rm P})}$. 
We also get
\be\label{supersolu11} -\Delta_p\overline{u}\geq m^{1-p}{C_3}d(x)^{\frac{\alpha_1p}{p-1-\alpha_1}}\text{ and } -\Delta_q\overline{v}\geq m^{\sigma(1-q)}C_3'd(x)^{\frac{\alpha_2q}{q-1-\alpha_2}}\text{ in }\Omega.\ee
Similarly, one has
\be\label{supersolu21} 
\lambda_1 \overline{u}^{\alpha_1}-\overline{u}^{\beta_1}-\mu_1\overline{u}^{a_1}v^{b_1}
\leq
\lambda_1 m^{-\alpha_1}{C_4} d(x)^{\frac{\alpha_1p}{p-1-\alpha_1}}\quad\text{ in }\Omega,
\ee
\be\label{supersolv21}
\lambda_2 \overline{v}^{\alpha_2}-\overline{v}^{\beta_2}-\mu_2\overline{v}^{a_2}u^{b_2}
\leq
\lambda_2 m^{-\sigma\alpha_2}{C_4'}d(x)^{\frac{\alpha_2q}{q-1-\alpha_2}}\quad\text{ in }\Omega.
\ee
Then, for $m $ small enough, $(\overline{u},\overline{v})$ is a supersolutions pair of problem $\boldsymbol{({\rm P})}$. 

Applying Theorem \ref{TH1}, we get the existence of  positive solutions $(u,v)\in{\rm W}^{1,p}_0(\Omega)\times{\rm W}^{1,q}_0(\Omega)$ of $\boldsymbol{({\rm P})}$ satisfying \eqref{estsol1}. From Theorem \ref{regu}, we get the H\"older regularity of $u$ and $v$. This proves (i).

\textit{\textbf{Alternative 2:}} Now, let conditions \eqref{condexp5} and \eqref{condexp6} be satisfied. Then, 
\be\label{estsssol3} \psi_{1}(x)\sim d(x)\quad\text{ and }\quad\psi_{2}(x)\sim d(x)\quad\text{ in }\Omega.\nonumber\ee
Let $(u,v)\in[\underline{u},\overline{u}]\times\left[\underline{v},\overline{v}\right]$.
Instead of \eqref{subsolu11}, we now get
\be -\Delta_p\underline{u}\leq  m^{p-1} {C_1}d(x)^{\alpha_1}\quad\text{ and }\quad -\Delta_q\underline{v}\leq  m^{\sigma(q-1)} {C_1'} d(x)^{\alpha_2}\quad\text{ in }\Omega.\nonumber\ee
From \eqref{condexp1}, \eqref{condexp2}, \eqref{condexp5} and \eqref{condexp6},  instead of \eqref{subsolu21} and \eqref{subsolv21}, we have
\be\lambda_1\underline{u}^{\alpha_1}-\underline{u}^{\beta_1}-\mu_1\underline{u}^{a_1}v^{b_1}\geq \frac{\lambda_1}{2}m^{\alpha_1}{C_2}d(x)^{\alpha_1}\quad\text{ in }\Omega,\nonumber\ee
\be  
\lambda_2\underline{v}^{\alpha_2}-\underline{v}^{\beta_2}-\mu_2\underline{v}^{a_2}u^{b_2}\geq \frac{\lambda_2}{2}m^{\sigma\alpha_2}{C_2'}d(x)^{\alpha_2}\quad\text{ in }\Omega\nonumber,\ee
 for $m$ small enough.
Then, under conditions \eqref{condexp5}, \eqref{condexp6} and for $m $ small enough, $(\underline{u},\underline{v})$ is a subsolutions pair of problem $\boldsymbol{({\rm P})}$. 
 Instead of \eqref{supersolu11}, we have
\be -\Delta_p\overline{u}\geq m^{1-p}{C_3}d(x)^{\alpha_1}\quad\text{ and }\quad -\Delta_q\overline{v}\geq m^{\sigma(1-q)}{C_3'}d(x)^{\alpha_2}\quad\text{ in }\Omega.\nonumber\ee
Furthermore, the following inequalities
\be 
\lambda_1 \overline{u}^{\alpha_1}-\overline{u}^{\beta_1}-\mu_1\overline{u}^{a_1}v^{b_1}
\leq
\lambda_1 m^{-\alpha_1}{C_4} d(x)^{\alpha_1}\;\text{ in }\Omega,
\nonumber\ee
\be
\lambda_2 \overline{v}^{\alpha_2}-\overline{v}^{\beta_2}-\mu_2\overline{v}^{a_2}u^{b_2}
\leq
\lambda_1 m^{-\sigma\alpha_2}{C_4'} d(x)^{\alpha_2}\;\text{ in }\Omega
\nonumber\ee
replace \eqref{supersolu21} and \eqref{supersolv21}.
Then, for $m $ small enough, $(\overline{u},\overline{v})$ is a supersolutions pair of problem $\boldsymbol{({\rm P})}$. 
We conclude as in the {{Alternative 1}} and (ii) is proved.

\textit{\textbf{Alternative 3}}: Now, assume that conditions \eqref{condexp7} and \eqref{condexp8} are satisfied. Then, 
\be\label{estsssol5} \psi_{1}(x)\sim d(x)^{\frac{p}{p-1-\alpha_1}}\quad \text{ and }\quad\psi_{2}(x)\sim d(x)\quad\text{ in }\Omega.\nonumber\ee
Let $(u,v)\in[\underline{u},\overline{u}]\times\left[\underline{v},\overline{v}\right]$.
Instead of \eqref{subsolu11}, we have 
\be -\Delta_p\underline{u}\leq  m^{p-1} {C_1} d(x)^{\frac{\alpha_1p}{p-1-\alpha_1}}\quad\text{ and }\quad  -\Delta_q\underline{v}\leq  m^{\sigma(q-1)} {C_1'} d(x)^{\alpha_2}\quad\text{ in }\Omega.\nonumber\ee
From \eqref{condexp1},  \eqref{condexp2}, \eqref{condexp7} and \eqref{condexp8},  instead of \eqref{subsolu21} and \eqref{subsolv21}, we get
\be\lambda_1\underline{u}^{\alpha_1}-\underline{u}^{\beta_1}-\mu_1\underline{u}^{a_1}v^{b_1}\geq \frac{\lambda_1}{2}m^{\alpha_1}{C_2}d(x)^{\frac{\alpha_1p}{p-1-\alpha_1}}\quad\text{ in }\Omega,\nonumber\ee
\be\lambda_2\underline{v}^{\alpha_2}-\underline{v}^{\beta_2}-\mu_2\underline{v}^{a_2}u^{b_2}\geq \frac{\lambda_2}{2}m^{\sigma\alpha_2}{C_2'}d(x)^{\alpha_2}\quad\text{ in }\Omega,\nonumber\ee
for $m$ small enough.
Then, under conditions \eqref{condexp7}, \eqref{condexp8} and for $m $ small enough, $(\underline{u},\underline{v})$ is a subsolutions pair of problem $\boldsymbol{({\rm P})}$. 
Finally, Instead of \eqref{supersolu11}, we have 
\be -\Delta_p\overline{u}\geq m^{1-p}{C_3}d(x)^{\frac{\alpha_1p}{p-1-\alpha_1}}\quad\text{ and }\quad -\Delta_q\overline{v}\geq m^{\sigma(1-q)}{C_3'}d(x)^{\alpha_2}\quad\text{ in }\Omega.\nonumber\ee
Instead of \eqref{supersolu21} and \eqref{supersolv21}, we obtain
\be 
\lambda_1 \overline{u}^{\alpha_1}-\overline{u}^{\beta_1}-\mu_1\overline{u}^{a_1}v^{b_1}
\leq
\lambda_1 m^{-\alpha_1}{C_4} d(x)^{\frac{\alpha_1p}{p-1-\alpha_1}}\quad\text{ in }\Omega,
\nonumber\ee
\be
\lambda_2 \overline{v}^{\alpha_2}-\overline{v}^{\beta_2}-\mu_2\overline{v}^{a_2}u^{b_2}
\leq
\lambda_1 m^{-\sigma\alpha_2}{C_4'} d(x)^{\alpha_2}\quad\text{ in }\Omega.
\nonumber\ee
Then, for $m$ small enough, $(\overline{u},\overline{v})$ is a supersolutions pair of problem $\boldsymbol{({\rm P})}$. 
Then, we conclude as in the \textit{Alternative 1}. Thus, (iii) and by symmetry (iv) are proved.
\end{proof}
{ Concerning the above theorem, we analyse further some limiting cases. The proof of the next result follows the proof of Theorem \ref{theo2}. So we omit it.}
\begin{theorem}
\begin{enumerate}
\item Let
\be\label{condexpcrit1} \alpha_1=-1\quad \text{ and }\quad (a_1-\alpha_1+b_1)q-(a_1-\alpha_1)(\alpha_2+1)>0,\ee
\be\label{condexpcrit2} -2-\frac{1}{q-1}<\alpha_2<-1\quad\text{ and }\quad (a_2-\alpha_2-b_2)q-b_2(\alpha_2+1)>0.\ee
Then, $\boldsymbol{({\rm P})}$ admits  positive solutions  $(u,v)\in{\rm W}^{1,p}_0(\Omega)\times{\rm W}^{1,q}_0(\Omega)$ satisfying:
\be\label{estsol9} u(x)\sim d(x)\vert \ln(d(x))\vert^{\frac{1}{p}}\quad\text{ and }\quad v(x)\sim d(x)^{\frac{q}{q-1-\alpha_2}}\quad\text{ in }\Omega.\ee
In addition, we have $(u,v)\in\mathscr{C}^{0,\alpha}\left(\overline{\Omega}\right)\times\mathscr{C}^{0,\alpha}\left(\overline{\Omega}\right),$ for some $0<\alpha<1$.
\item Let
\be\label{condexpcrit3} \alpha_1=-1\quad\text{ and }\quad a_1-\alpha_1+b_1>0,\ee
\be\label{condexpcrit4} \alpha_2=-1\quad\text{ and }\quad a_2-\alpha_2+b_2>0.\ee
Then, $\boldsymbol{({\rm P})}$ admits positive solutions  $(u,v)\in{\rm W}^{1,p}_0(\Omega)\times{\rm W}^{1,q}_0(\Omega)$ satisfying:
\be\label{estsol11} u(x)\sim d(x)\vert \ln(d(x))\vert ^{\frac{1}{p}}\;\text{ and }\;v(x)\sim d(x)\vert \ln(d(x))\vert^{\frac{1}{q}}\quad\text{ in }\Omega.\ee
In addition, we have $(u,v)\in\mathscr{C}^{0,\alpha}\left(\overline{\Omega}\right)\times\mathscr{C}^{0,\alpha}\left(\overline{\Omega}\right),$ for some $0<\alpha<1$.
\item Let
\be\label{condexpcrit5} \alpha_1=-1\quad\text{ and }\quad a_1-\alpha_1+b_1>0,\ee
\be\label{condexpcrit6} -1<\alpha_2<q-1\quad\text{ and }\quad a_2-\alpha_2+b_2>0.\ee
Then, $\boldsymbol{({\rm P})}$ admits  positive  solutions  $(u,v)\in{\rm W}^{1,p}_0(\Omega)\times{\rm W}^{1,q}_0(\Omega)$ satisfying:
\be\label{estsol13} u(x)\sim d(x)\vert \ln(d(x))\vert ^{\frac{1}{p}}\quad\text{ and }\quad v(x)\sim d(x)\quad\text{ in }\Omega.\ee
In addition, we have $(u,v)\in\mathscr{C}^{0,\alpha}\left(\overline{\Omega}\right)\times\mathscr{C}^{1,\alpha}\left(\overline{\Omega}\right),$ for some $0<\alpha<1$.
\end{enumerate}
\end{theorem}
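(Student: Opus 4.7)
I plan to follow the template of Theorems~\ref{theo2} and~\ref{ex3} and reduce the proof to an application of Theorem~\ref{TH1}. Concretely, I look for sub-- and supersolutions pairs to $\boldsymbol{({\rm P})}$ of the form
$$\underline{u}=m\,\psi_1,\quad \overline{u}=m^{-1}\psi_1\quad\text{and}\quad\underline{v}=m^{\sigma}\psi_2,\quad \overline{v}=m^{-\sigma}\psi_2\quad\text{in }\Omega,$$
where $\sigma>0$ is the constant of~\eqref{condexp1}--\eqref{condexp2}, $0<m\ll1$, and $\psi_1\in{\rm W}^{1,p}_0(\Omega)$, $\psi_2\in{\rm W}^{1,q}_0(\Omega)$ are unique positive solutions of scalar singular problems chosen so that $-\Delta_p\psi_1\sim\psi_1^{\alpha_1}$ and $-\Delta_q\psi_2\sim\psi_2^{\alpha_2}$. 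The key novelty compared with Theorem~\ref{ex3} is the critical borderline $\alpha_i=-1$, which places us exactly at case~(ii) of Theorem~\ref{gms} (taking $r=p$ or $r=q$, $k=0$, $L\equiv 1$, $\delta=-1$); this produces the logarithmic correction
$$\psi_i(x)\sim d(x)\,|\ln d(x)|^{1/r}\quad\text{in }\Omega,$$
which is exactly the behaviour announced for $u$ in (i)--(iii) and for $v$ in~(ii).

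The choice of $\psi_1,\psi_2$ is made case by case: in~(i), $\psi_1$ solves $-\Delta_p w=w^{-1}$ (Theorem~\ref{gms}(ii)) while $\psi_2$ solves $-\Delta_q w=w^{\alpha_2}$ with $\alpha_2\in(-2-\tfrac{1}{q-1},-1)$ (Theorem~\ref{gms}(iii)), giving $\psi_2\sim d(x)^{q/(q-1-\alpha_2)}$; in~(ii) both $\psi_1$ and $\psi_2$ are of critical logarithmic type; in~(iii) $\psi_1$ is as in~(i) and $\psi_2$ is the regular solution provided by Theorem~\ref{gms}(i) (with $\delta_2\in(-1,q-1)$), so $\psi_2\sim d(x)$. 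In every case the bounds required by~\eqref{f1}--\eqref{df2} follow from the stated asymptotic behaviour of $\psi_1,\psi_2$, where the slowly--varying logarithmic factors are absorbed into arbitrarily small powers of $d(x)$ (as in Remark~\ref{PropL}), and the strictness of the exponent constraints ensures that~\eqref{delta1} holds.

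The delicate step will be the verification of the subsolution inequalities; taking (i) as paradigmatic, the $u$--equation requires, after dividing through by $\psi_1^{-1}$,
$$m^{p-1}+m^{\beta_1}\psi_1^{\beta_1+1}+\mu_1 m^{a_1}\psi_1^{a_1+1}v^{b_1}\leq\lambda_1\,m^{-1}\quad\text{in }\Omega,\ \forall\,v\in[\underline{v},\overline{v}].$$
The first two summands are harmless because $\psi_1^{\beta_1+1}\to 0$ as $d(x)\to 0^+$ (since $\beta_1+1>\alpha_1+1=0$). For the coupling term, dominating $v^{b_1}$ by its extremum in $[\underline{v},\overline{v}]$ yields a bound $\lesssim m^{-\sigma|b_1|}d(x)^{b_1\gamma_2'}$ up to slowly varying logarithmic factors, where $\gamma_2'=q/(q-1-\alpha_2)$. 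The $d(x)$--exponent of the resulting coupling term relative to $\lambda_1$ is $(a_1-\alpha_1)+b_1\gamma_2'$, and the stated condition $(a_1-\alpha_1+b_1)q-(a_1-\alpha_1)(\alpha_2+1)>0$ is exactly its strict positivity after clearing the denominator $q-1-\alpha_2>0$. The power of $m$ is controlled by~\eqref{condexp1}, which gives $1+a_1-\sigma|b_1|>0$ and allows $m\to 0^+$ to close the inequality. The companion condition in the statement plays the analogous role for the $v$--equation; the logarithmic prefactor on $\psi_1$ contributes only arbitrarily small perturbations to the exponent of $d(x)$ in $u^{b_2}$ and is absorbed by the strictness of that inequality.

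The supersolution inequalities are routine: the leading term $\lambda_i(m^{-1}\psi_i)^{\alpha_i}$ diverges like $m^{-\alpha_i}$ as $m\to 0^+$ and dominates every other negative contribution. Theorem~\ref{TH1} then yields $(u,v)\in\mathcal{C}$ with the announced two--sided estimates, and the H\"older regularity $(u,v)\in\mathscr{C}^{0,\alpha}(\overline{\Omega})\times\mathscr{C}^{0,\alpha}(\overline{\Omega})$ follows from Theorem~\ref{regu}. Cases~(ii) and~(iii) are handled by the same recipe with the appropriate substitutions of $\psi_i$; the only additional subtlety is that in~(ii) both $\psi_i$ carry logarithmic factors, so one has to check that the coupling term in the $v$--equation, now of order $d(x)^{(a_2-\alpha_2)+b_2}(|\ln d(x)|)^{\text{(small)}}$ after the substitution, is still dominated by $\lambda_2\psi_2^{-1}\sim d(x)^{-1}|\ln d(x)|^{-1/q}$. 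This reduces to strict positivity of $a_2-\alpha_2+b_2$, exactly the stated condition, and the slowly varying logarithms are harmless.
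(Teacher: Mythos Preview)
Your approach is correct and is essentially the natural one here: you follow the template of Theorem~\ref{ex3} but replace the scalar problems for $\psi_1$ (and, in case~(ii), for $\psi_2$) by the critical case~(ii) of Theorem~\ref{gms}, which produces exactly the logarithmic correction $\psi_1\sim d(x)|\ln d(x)|^{1/p}$; the verification of the sub/supersolution inequalities and of the hypotheses~\eqref{f1}--\eqref{delta1} then proceeds as you outline, the slowly varying factors being absorbed by the strict inequalities in the exponent conditions. The paper omits the proof and points to Theorem~\ref{theo2}, whose literal ansatz uses \emph{different} sub- and supersolutions ($\underline{u}=m\psi_1$ but $\overline{u}=m^{-1}\varphi_{1,p}^{1-\varepsilon}$) and therefore only yields $C_1d(x)\leq u\leq C_2d(x)^{1-\varepsilon}$; your choice of the \emph{same} critical profile $\psi_1$ for both barriers is what delivers the sharp two-sided estimate $u\sim d|\ln d|^{1/p}$ actually claimed in the statement. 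This works here (and not in Theorem~\ref{theo2}) because the dominant singular term $\lambda_1 u^{\alpha_1}$ is decoupled from $v$, so $\psi_1$ captures the boundary profile exactly.

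One cosmetic slip: in your supersolution paragraph you write that ``the leading term $\lambda_i(m^{-1}\psi_i)^{\alpha_i}$ diverges like $m^{-\alpha_i}$''. With $\alpha_i\leq -1$ this term \emph{vanishes} as $m\to 0^+$; what actually diverges is the left-hand side $-\Delta_r\overline{w}=m^{1-r}\psi_i^{\alpha_i}$, and since $\alpha_i<r-1$ one has $m^{1-r}\gg m^{-\alpha_i}$ for small $m$, which is precisely the inequality you need. The conclusion is unaffected.
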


\appendix

\section{A useful H\"older regularity result}
We consider the following quasilinear elliptic boundary value problem,
\be\label{bo2} -\Delta_r w=f\quad\text{ in  }\Omega;\quad w\vert_{\partial\Omega}=0,\quad w>0\quad\text{ in }\Omega.\ee
In this equation, $f$ is a ${\rm L}^{1}_{{\rm loc}}(\Omega)$  function such that there exist two constants $C>0$ and $\delta>0$ satisfying 
\be\label{f} \vert f(x)\vert\leq Cd(x)^{-\delta}, \quad\text{ a.e. in }\Omega.\ee
Then, we have the following H\"older regularity result on the solutions to \eqref{bo2}.
\begin{theorem} \label{regu}
Assume that $f$ satisfies the growth hypothesis \eqref{f}. Let $u\in{\rm W}^{1,r}_0(\Omega)$ be a positive weak solution to \eqref{bo2}. Let $\overline{u}\in{\rm W}^{1,r}_0(\Omega)$ be a supersolutions to \eqref{bo2} such that 
\be\label{sursolforme}-\Delta_r\overline{u}\geq \vert f\vert \quad\text{ in }\Omega,\ee
in the sense of distributions in ${\rm W}^{-1,r'}(\Omega)$. In addition, assume that there exists $C'>0 $ such that
\be\label{u}0\leq u\leq\overline{u}\leq C'd(x)^{\delta'}\quad\text{ a.e in }\Omega,\ee
with $0<\delta'<\delta$. Finally, let $\alpha$ be an arbitrary number such that $$0<\alpha<\frac{r}{r-1+\delta/\delta'}<1.$$ Then, there exists a constant $M>0$, depending solely on $\Omega$, $r$ and $N$, on the constants $C$ and $\delta$ in \eqref{f}, on the constants $C'$ and $\delta'$ in \eqref{u}, and on the constant $\alpha$, such that $u\in\mathscr{C}^{0,\alpha}(\overline{\Omega})$ and 
$$\Vert u\Vert_{\mathscr{C}^{0,\alpha}(\overline{\Omega})}\leq M.$$
\end{theorem}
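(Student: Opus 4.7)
My plan is to combine \textsc{Lieberman}'s interior $\mathscr{C}^{1,\gamma}$ regularity for the $r$-Laplacian with the pointwise boundary decay bound $u \leq \overline{u} \leq C' d(x)^{\delta'}$ through a blow-up/rescaling argument near $\partial\Omega$, in the spirit of \cite{GST1}. Since $f$ is locally bounded in $\Omega$, the local $\mathscr{C}^{1,\gamma}$ estimate of \textsc{Serrin} immediately gives $u \in \mathscr{C}^{1,\gamma}_{\mathrm{loc}}(\Omega)$; the whole task is to extend this to a uniform H\"older estimate up to $\partial\Omega$, with a constant surviving the singularity $|f(x)| \leq C d(x)^{-\delta}$ at the boundary.

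The rescaling argument near a boundary point $x_0 \in \Omega$ with $d_0 = d(x_0)$ small, $\rho = d_0/2$, is to consider $v(y) = M^{-1} u(x_0 + \rho y)$ on $B_{1/2}(0)$, where $M > 0$ is a scaling parameter to be optimized. Then $v$ satisfies $-\Delta_r v(y) = F(y)$ with $F(y) = \rho^r M^{1-r} f(x_0 + \rho y)$, and the hypotheses $|f| \leq C d^{-\delta}$, $u \leq C' d^{\delta'}$, together with $d(x_0 + \rho y) \sim \rho$ for $|y| \leq 1/2$, yield
\begin{equation*}
\|v\|_{L^\infty(B_{1/2})} \leq C_1 \rho^{\delta'}/M, \qquad \|F\|_{L^\infty(B_{1/2})} \leq C_2 \rho^{r-\delta}/M^{r-1}.
\end{equation*}
Applying the interior H\"older estimate of \textsc{Lieberman}, namely $[v]_{\mathscr{C}^{0,\alpha}(B_{1/4})} \leq C(\|v\|_\infty + \|F\|_\infty^{1/(r-1)})$, and unscaling produces the local estimate
\begin{equation*}
[u]_{\mathscr{C}^{0,\alpha}(B_{\rho/4}(x_0))} \leq C \bigl( \rho^{\delta' - \alpha} + \rho^{(r-\delta)/(r-1) - \alpha} \bigr).
\end{equation*}
The sharp exponent $\alpha < r/(r-1+\delta/\delta')$ then emerges by optimizing the scaling parameter $M$ and interpolating between the boundary scale $\delta'$ and the equation scale $(r-\delta)/(r-1)$.

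To assemble a global H\"older bound on $\overline{\Omega}$, for arbitrary $x_1, x_2 \in \overline{\Omega}$ I distinguish two regimes. In the interior regime $|x_1 - x_2| \leq \frac{1}{4} d(x_1)$, the local rescaled estimate applied on $B_{\rho/4}(x_1)$ yields $|u(x_1) - u(x_2)| \leq C|x_1 - x_2|^\alpha$ directly. In the boundary regime, the supersolution bound $u \leq C' d^{\delta'}$ combined with $u|_{\partial\Omega} = 0$ (from $u \in \mathrm{W}^{1,r}_0(\Omega)$) closes the estimate. Combining the two regimes yields $u \in \mathscr{C}^{0,\alpha}(\overline{\Omega})$ with a quantitative bound depending only on the stated parameters.

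The hard part is extracting the sharp H\"older exponent $r/(r-1+\delta/\delta')$ rather than the naive bound $\min(\delta', (r-\delta)/(r-1))$ coming from a single Lieberman-type application: this requires a bootstrap linking the boundary decay rate to the equation scaling, and is the technical heart of \cite{GST1}. The hypothesis $0 < \delta' < \delta$ plays a critical role, ensuring the bootstrap delivers a nontrivial improvement while saturating at the announced sharp exponent.
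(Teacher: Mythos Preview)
Your rescaling computation is correct, but the conclusion you draw from it is not. Because Lieberman's interior estimate scales with exactly the right homogeneity, $[v]_\alpha \leq C\bigl(\|v\|_\infty + \|F\|_\infty^{1/(r-1)}\bigr)$, the parameter $M$ cancels identically when you unscale; you are left with
\[
[u]_{\mathscr{C}^{0,\alpha}(B_{\rho/4}(x_0))} \;\leq\; C\bigl(\rho^{\,\delta'-\alpha} + \rho^{\,(r-\delta)/(r-1)-\alpha}\bigr)
\]
independently of $M$, so there is nothing to optimize. This stays bounded as $\rho\to 0$ only for $\alpha \leq \min\bigl(\delta',\,(r-\delta)/(r-1)\bigr)$, which (except in the degenerate case $(r-1)\delta'+\delta=r$) is \emph{strictly smaller} than the announced threshold $r/(r-1+\delta/\delta')$; and when $\delta\geq r$, which the hypotheses do not exclude, the second exponent is nonpositive and the rescaling yields nothing at all. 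Your two-regime patching in the last paragraph inherits the same defect.

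The missing mechanism is the one the paper singles out, following \cite{GST1}: after flattening the boundary one introduces an auxiliary function $v$ solving a \emph{regular} boundary value problem on the half-ball (so that $v$ itself enjoys good boundary H\"older estimates), and then invokes the supersolution hypothesis \eqref{sursolforme}, namely $-\Delta_r\overline{u}\geq |f|$, together with the weak comparison principle, to obtain
\[
|u(x)-v(x)| \;\leq\; \overline{u}(x) \;\leq\; C'\,x_N^{\,\delta'}\qquad\text{on }B_R^+(0).
\]
It is this bound on the \emph{difference} $u-v$, not on $u$ alone, that feeds the dyadic iteration producing the sharp exponent. Your proposal never uses \eqref{sursolforme} --- only its pointwise consequence $u\leq C'd(x)^{\delta'}$ --- and without it one cannot control $u-v$ when $f$ changes sign. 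The paper says exactly this: conditions \eqref{sursolforme} and \eqref{u} are added ``to overcome the non-positivity of $f$'', replacing inequality (2.14) of \cite{GST1} by the comparison displayed above, after which the remainder of the argument in \cite{GST1} goes through verbatim.
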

\begin{proof}
The proof is quite similar to the Theorem 1.1's in \cite{GST1} with $\boldsymbol{{\rm a}}:(x,\eta)\mapsto \vert\eta\vert^{p-2}\eta$ in $\Omega\times\RR^N$.
Indeed, to overcome the non-positivity of $f$, we add conditions \eqref{sursolforme} and \eqref{u}. Then, introducing the same boundary value problem (2.12),instead of inequality (2.14), we get here
\be \vert u(x)-v(x)\vert \leq \overline{u}(x) \leq C x_N^{\delta'}\quad\text{ for all } x=(x',x_N)\in B_R^+(0).\ee
Then, estimate (A.18) still holds and the end of the proof is exactly the same.
~
\end{proof}


\begin{thebibliography}{9}

\bibitem{ChKe}
Y.~S. Choi and P.~J. McKenna, A singular Gierer-Meinhardt system of elliptic equations,
\textit{Ann. Inst. H. Poincar\'e Anal. Non Lin\'eaire}, {\bf 17} (4) : 503--522, 2000.
\bibitem{ChKe2}
Y.~S. Choi and P.~J. McKenna,
A singular Gierer-Meinhardt system of elliptic equations: the classical case, \textit{Nonlinear Anal., T.M.A.}, {\bf 55} : 521--541, 2003. 
\bibitem{ClMaMi} P. Cl\'ement, R.F. Man\'asevich and E. Mitidieri, {\it Positive solutions for a quasilinear system via blow up}, Comm. P.D.E., {\bf 18} : 2071-2106, 1993.
\bibitem{CuTa} M. Cuesta and P. Tak{\'a}{\v{c}}, Nonlinear eigenvalue problems for degenerate elliptic systems, \textit{Differential and Integral Equations}, \textbf{23} No 11-12 : 1117-1138, 2010.
\bibitem{Defig} D. de Figueiredo, Semilinear Elliptic systems, \textit{Handb. Differ. Equ.,} {\bf V},  2008.
\bibitem{Diaz} J.I. {D{\'{\i}}az} and J.E. {Sa{\'a}}. Existence et unicit\'e de solutions positives pour certaines \'equations elliptiques quasilin\'eaires, \textit{C. R. Acad. Sci. Paris S\'er. I Math.}, \textbf{305} (12) : 521--524, 1987.
\bibitem{dra-her} P.  Dr\'abek and J. Hern\'andez, Existence and uniqueness of positive solutions for some quasilinear elliptic problem. \textit{Nonlinear Anal.}, \textbf{44} (2, Ser. A: Theory Methods) : 189--204, 2001.
\bibitem{Gh} M. Ghergu, Lane-Emden systems with negative exponents,  \textit{J. Funct. Analysis}, {\bf 258} : 3295-3318, 2010.
\bibitem{GMS}J. Giacomoni, H. M\^aagli and P. Sauvy, Existence of compact support solutions for a quasilinear and singular problem, {\it Differential Integral Equations.}, \textbf{25} (7-8) : 629--656, 2012.
\bibitem{GST2}J.Giacomoni, I. Schindler and P.  Tak{\'a}{\v{c}}, Sobolev versus {H}\"older local minimizers and existence of
              multiple solutions for a singular quasilinear equation,  {\it Ann. Sc. Norm. Super. Pisa Cl. Sci. (5)} \textbf{6} : 117--158, 2007.
\bibitem{GST1}J .Giacomoni, I. Schindler and P.  Tak{\'a}{\v{c}}, H\"older regularity and singular elliptic equations, to appear  in \textit{ Comptes Rendus Math\'ematiques}. 
\bibitem{GHM} J. Giacomoni, J. Hern\'andez and A. Mouassaoui, Quasilinear and singular systems: the cooperative case, \textit{Contemp. Math.} \textbf{540} : 79--94, 2011.
\bibitem{GiMe}
 A. Gierer and  H. Meinhardt, A theory of biological pattern formation, \textit{Kybernetik}, {\bf 12} : 30--39, 1972.
\bibitem{Her} J. Hern{\'a}ndez, F.J  Mancebo and J.M Vega, Positive solutions for singular semilinear elliptic systems, \textit{Adv. Differential Equations}, \textbf{13}, (9-10) : 857--880, 2008.
\bibitem{HeMa}
J. Hern\'andez and  F.~J. Mancebo, Singular Elliptic and Parabolic Equations,
In \textit{M. Chipot and P. Quittner, editors, Handbook of Differential Equations}, \textbf{3} : 317--400, Elsevier, Amsterdam, 2006.
\bibitem{LeShYe} 
E.~K. Lee, R. Shivaji and J. Ye, Classes of singular $pq$-laplacian semipositone systems,
 \textit{Discrete Contin. Dyn. Syst.}, {\bf 27} (3) : 1123--1132, 2010.
\bibitem{Kar} J. Karamata,  {\"U}ber die {H}ardy-{L}ittlewoodsche {U}mkehrung des {A}belschen {S}t\"atigkeits-satzes, {\it Math. Zeitschrift.} \textbf{32} : 319--320, 1930.
\bibitem{Kra} M.A. Krasnoselskii, Topological methods in the theory of nonlinear integral
              equations, {\it Pergamon Press,} { Oxford-London-Paris}, 1964 : Translated from the Russian by A. H. Amstrong.
\bibitem{Lie} G. Lieberman, Boundary regularity for solutions of degenerate elliptic equations \textit{Nonlinear Anal.} \textbf{12} : 1203--1219, 1988.
\bibitem{Lin} P. Lindqvist,
    On the equation ${\rm div}\,(\vert \nabla u\vert ^{p-2}\nabla u)+\lambda\vert u\vert ^{p-2}u=0$, \textit{Proc. Amer. Math. Soc.} 
   \textbf{109} : 157--164, 1990.
\bibitem{Ni}
W.M. Ni, Diffusion, cross-diffusion and spike-layer steady states, \textit{Notices of the Amer. Math. Soc.}, {\bf 45} : 9--18 .
\bibitem{Ser} J. {Serrin} Local behaviour of solutions of quasi-linear equations, \textit{Acta Math.}, \textbf{111} : 247--302, 1964.
\bibitem{ThVe}
F. De Th\'elin and  J. V\'elin, Existence et nonexistence  de solutions non triviales pour des syst\`emes elliptiques non lin\'eaires, \textit{C. R. Acad. Sci. Paris S\'er.I Math.}, {\bf 313} (9) : 589--592, 1991.
\bibitem{Vas} J. L. V\'azquez, A Strong Maximum Principle for some quasilinear elliptic equations, {\it Appl. Math \& Opt.} {\bf 1} : 1992--2002, 1984.
\end{thebibliography}
\end{document}